\numberwithin{equation}{section}
\DeclareMathOperator{\modu}{{{mod}}}
\theoremstyle{plain}
\newtheorem{thm}{Theorem}[section]
\newtheorem{lem}[thm]{Lemma}
\newtheorem{prop}[thm]{Proposition}
\newtheorem{cor}[thm]{Corollary}
\newtheorem{rem}[thm]{Remark}
\theoremstyle{definition}
\newtheorem{defn}[thm]{Definition}
\newtheorem{example}[thm]{Example}
\newcommand{\s}{\vspace{0.3cm}}
\newcommand{\Z}{\mathbb{Z}}
\begin{document}
\title[A class of pseudoreal Riemann surfaces]{A class of pseudoreal Riemann surfaces with diagonal automorphism group}

\author[Eslam Badr] {Eslam Badr}
\address{$\bullet$\,\,Eslam Badr}
\address{Department of Mathematics,
Faculty of Science, Cairo University, 12613 Giza-Egypt}
\email{eslam@sci.cu.edu.eg}


\keywords{Pseudoreal Riemann surface, Field of moduli, Field of definition, Plane curves, Automorphism group}
\subjclass[2010]{14H50; 14H37; 30F10; 14H10}

\maketitle
\begin{abstract}
A Riemann surface $\mathcal{S}$ having field of moduli $\mathbb{R}$, but not a field of definition, is called \emph{pseudoreal}. This means that $\mathcal{S}$ has anticonformal automorphisms, but non of them is an involution. We call a Riemann surface $\mathcal{S}$ \emph{plane} if it can be described by a smooth plane model of some degree $d\geq4$ in $\mathbb{P}^2_{\mathbb{C}}$.

%
We characterize pseudoreal-plane Riemann surfaces $\mathcal{S}$, whose conformal automorphism group $\operatorname{Aut}_+(\mathcal{S})$ is $\operatorname{PGL}_3(\mathbb{C})$-conjugate to a finite non-trivial group $G$ that leaves invariant infinitely many points of $\mathbb{P}^2_{\mathbb{C}}$. In particular, we show that such pseudoreal-plane Riemann surfaces exist only if $\operatorname{Aut}_+(\mathcal{S})$ is cyclic of even order $n$ dividing the degree $d$. Explicit examples are given, for any  degree $d=2pm$ with $m>1$ odd, $p$ is prime and $n=d/p$.
\end{abstract}

\section{Introduction}
Any Riemann surface $\mathcal{S}$ of genus $g$ can be understood by a certain irreducible projective curve $\mathcal{C}$ of genus $g$ over $\mathbb{C}$. A complex subfield $K$ is \emph{a field of definition for $\mathcal{S}$} if $\mathcal{C}$ can be defined over $K$. \emph{The field of moduli of $\mathcal{S}$} is the intersection of all fields
of definition for $\mathcal{S}$.

There exists in the literature an alternative definition for the field of moduli, relative to a given field extension $L/K$, which is commonly used; Given an irreducible projective curve $\mathcal{C}$ defined over a field $L$, the field of moduli of $\mathcal{C}$ relative to $L/K$, denoted by $M_{L/K}(\mathcal{C})$, is the fixed subfield of $L$ by the subgroup
$$U_{L/K}(\mathcal{C}):=\{\sigma\in\operatorname{Gal}(L/K)\,|\,\mathcal{C}\,\,\text{isomorphic over}\,\,L\,\,\text{to}\,\,^{\sigma}\mathcal{C}\},$$
where $\operatorname{Gal}(L/K)$ denotes the group of all $K$-automorphisms of the field $L$. We are interested in the particular case when $K=\mathbb{R}$ and $L=\mathbb{C}$.

A necessary and sufficient condition for the field of moduli to be a field of definition was provided by Weil \cite{We}:
\begin{thm}[Weil's criterion of descent]\label{weilcocylcle}
Let $\mathcal{C}$ be an irreducible projective algebraic curve defined over a field $L$, and let $L/K$ be a finite Galois extension. There is an irreducible projective algebraic curve $\mathcal{C}'$ over $K$ and an $L$-isomorphism $g:\mathcal{C}'\times_KL\rightarrow\mathcal{C}$ if and only if, for any $\sigma\in\operatorname{Gal}(L/K)$, there exists an $L$-isomorphism $f_{\sigma}:\,^{\sigma}\mathcal{C}\mapsto\,\mathcal{C}$ such that $f_{\sigma}\circ\,^{\sigma}f_{\tau}=f_{\sigma\tau}\,\,\text{for all}\,\,\sigma,\tau\in\operatorname{Gal}(L/K).$
Moreover, $f_{\sigma}\circ\,^{\sigma}g=g$ for all $\sigma\in\operatorname{Gal}(L/K)$.
\end{thm}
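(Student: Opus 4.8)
The plan is to prove the two implications separately; the forward direction is formal, while the converse is the real content of Weil's descent and will require a Galois-descent (``twisting'') argument.

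For the forward implication, suppose a curve $\mathcal{C}'$ over $K$ and an $L$-isomorphism $g:\mathcal{C}'\times_K L\to\mathcal{C}$ are given. Since $\mathcal{C}'$ is already defined over $K$, the base change $\mathcal{C}'\times_K L$ carries the tautological semilinear $\operatorname{Gal}(L/K)$-action, and for each $\sigma$ there is a canonical identification ${}^{\sigma}(\mathcal{C}'\times_K L)=\mathcal{C}'\times_K L$. I would then simply set $f_{\sigma}:=g\circ({}^{\sigma}g)^{-1}:{}^{\sigma}\mathcal{C}\to\mathcal{C}$. The relation $f_{\sigma}\circ{}^{\sigma}g=g$ is then immediate, and the cocycle identity follows from ${}^{\sigma}({}^{\tau}g)={}^{\sigma\tau}g$ via the short computation $f_{\sigma}\circ{}^{\sigma}f_{\tau}=g\circ({}^{\sigma}g)^{-1}\circ{}^{\sigma}g\circ({}^{\sigma\tau}g)^{-1}=g\circ({}^{\sigma\tau}g)^{-1}=f_{\sigma\tau}$.

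For the converse, I would use the family $(f_{\sigma})_{\sigma}$ to build a new, ``twisted'' semilinear action of $\operatorname{Gal}(L/K)$ on $\mathcal{C}$: each $\sigma$ acts first by the canonical semilinear map $\mathcal{C}\to{}^{\sigma}\mathcal{C}$ and then by $f_{\sigma}$, and the cocycle condition is precisely what makes these compose correctly, i.e. $(\sigma)\circ(\tau)=(\sigma\tau)$. The task is to show this twisted action is \emph{effective}, i.e. that it descends $\mathcal{C}$ to a curve $\mathcal{C}'$ over $K$ together with $g:\mathcal{C}'\times_K L\to\mathcal{C}$. Concretely I would fix a Galois-equivariant very ample line bundle on $\mathcal{C}$ (for instance a suitable tensor power of the canonical bundle, which is intrinsic, so that the $f_{\sigma}$ may be taken to act linearly on the corresponding projective space), let $R=\bigoplus_{n\geq 0} H^0(\mathcal{C},\mathcal{O}(n))$ be the resulting graded homogeneous coordinate ring over $L$, and transport the twisted action to a semilinear $\operatorname{Gal}(L/K)$-action on $R$. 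Then Speiser's theorem (the generalized Hilbert 90, $H^1(\operatorname{Gal}(L/K),\mathrm{GL}_m(L))=1$) gives, in each degree, an isomorphism $R^{\operatorname{Gal}(L/K)}\otimes_K L\xrightarrow{\ \sim\ }R$; the $K$-algebra $R':=R^{\operatorname{Gal}(L/K)}$ inherits the grading and multiplication, and $\mathcal{C}':=\operatorname{Proj}(R')$, with $g$ the induced isomorphism, does the job. Irreducibility of $\mathcal{C}'$ follows from that of $\mathcal{C}$ since $L/K$ is separable, and $f_{\sigma}\circ{}^{\sigma}g=g$ holds by construction.

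The main obstacle I anticipate is exactly this effectivity step: a priori the $f_{\sigma}$ are only isomorphisms of abstract varieties, so one must first \emph{linearize} them — arrange that they all respect one and the same Galois-invariant projective embedding — before Hilbert 90 is applicable; for curves this is achieved by passing to a high tensor power of the canonical polarization, which is canonically defined and hence automatically Galois-stable, but verifying that the twisted action descends the full multiplicative structure of $R$ (and not merely each graded piece) is where the argument must be carried out carefully. The remaining points — the displayed cocycle relation and the compatibility $f_{\sigma}\circ{}^{\sigma}g=g$ — are routine bookkeeping.
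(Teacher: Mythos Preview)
The paper does not actually supply a proof of this theorem: it is stated as Theorem~\ref{weilcocylcle} with a reference to Weil's original article \cite{We}, and is then used as a black box throughout. So there is no ``paper's own proof'' to compare against.

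That said, your outline is a standard and essentially correct route to Weil's descent for curves. The forward implication is exactly as you wrote. For the converse, your strategy of linearizing the cocycle via a Galois-stable very ample line bundle and then invoking Hilbert~90 degree by degree on the section ring is the usual modern argument (and is, in effect, the projective case of Galois descent of quasi-projective varieties). Two small points worth tightening: first, the canonical bundle itself need not be very ample (hyperelliptic curves, or genus $\leq 1$), so you really do need a sufficiently high power, and for $g\leq 1$ you must choose some other intrinsic polarization or argue separately; second, once each graded piece $R_n$ has a $K$-form $R'_n$ via Speiser, the compatibility of the multiplication maps $R'_m\otimes_K R'_n\to R'_{m+n}$ is automatic because the twisted $\operatorname{Gal}(L/K)$-action was defined by ring automorphisms of $R$, so the invariants form a subring --- you flagged this as the delicate step, but in fact it is immediate once the action is set up as you described. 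The remaining bookkeeping (irreducibility of $\mathcal{C}'$, the identity $f_{\sigma}\circ{}^{\sigma}g=g$) is routine, as you say.
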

\begin{rem}
A. Weil in \cite{We} observed that the
above result is valid as long the overfield is finitely generated over the
lower field. In particular, it holds for $L=\mathbb{C}$, $K=\mathbb{Q}$ and $\mathcal{C}$ of genus $g\geq2$ (cf. \cite[Remark 2.2]{BaHidQu}).
\end{rem}
A Riemann surfaces $\mathcal{S}$ with trivial automorphism group needs to be defined over its field of moduli, since the above condition becomes trivially true. It is also known that the field of moduli is a field of definition for $\mathcal{S}$ when the genus $g$ is at most $1$. However, if $g>1$ and $\operatorname{Aut}_+(\mathcal{S})$ is non-trivial, then Weil's conditions are difficult to be checked and so there is no guarantee that the field of moduli is a field of definition.
This was first pointed out by Earle \cite{earle} and Shimura \cite{shimura}. More concretely, in page 177 of \cite{shimura}, the first examples not definable over their field of moduli are introduced, which are hyperelliptic curves over $\mathbb{C}$ with two automorphisms. There are also examples of non-hyperelliptic curves not definable over their field of moduli in \cite{SAUL1, hidalgo1, Hug, Kont}. For all these (explicit) examples, the field of moduli is always a subfield of $\mathbb{R}$, and they can be defined over an imaginary extension of degree two of the field of moduli. The present author and et. al. constructed in \cite{BaHidQu} the first (explicit) examples of Riemann surfaces, which are definable over the
reals but cannot be defined over the field of moduli.

\subsection{Case of hyperelliptic curves}
By the work of Mestre \cite{Me}, Huggins \cite{Hug2, Hug},
Lercier-Ritzenthaler \cite{LeRi} and Lercier-Ritzenthaler-Sijsling
\cite{LRS}, one gets the answer in the case of hyperelliptic curves $\mathcal{C}$ over a perfect field $L$ of characteristic $p\neq2$. We issue the next table from \cite{LRS}:
\begin{center}
\begin{tabular}{|c|c|c|c|}
\hline
$H=\operatorname{Aut}_+(\mathcal{C})/\langle\iota\rangle$ & Conditions & The field of moduli=\\
                                                         &            & A field of definition\\\hline
Not tamely cyclic                                        &            & Yes\\\hline
\multirow{2}{*}{Tamely cyclic with $\# H>1$}             &$g$ odd, $\#H$odd  &    Yes\\
&$g$ even or $\#H$ even &No\\
\hline \multirow{2}{*}{Tamely cyclic with $\#H=1$}&$g$ odd &Yes\\
&$g$ even& No\\
\hline
\end{tabular}
\end{center}
\s
By \emph{tamely cyclic}, we mean that the group is cyclic of order
not divisible by the characteristic $p$.
\subsection{Case of smooth plane curves}
A linear transformation $A=(a_{i,j})$ of the projective plane $\mathbb{P}^2_{L}$ over a field $L$ is often written as $$[a_{1,1}X+a_{1,2}Y+a_{1,3}Z:a_{2,1}X+a_{2,2}Y+a_{2,3}Z:
a_{3,1}X+a_{3,2}Y+a_{3,3}Z],$$
where $\{X,Y,Z\}$ are the homogenous coordinates of $\mathbb{P}^2_L$.

\begin{defn}
A smooth projective curve $\mathcal{C}$ of genus $g\geq3$ over an algebraically closed field $\overline{L}$ is called a \emph{smooth plane curve of genus $g$ over $\overline{L}$} if $\mathcal{C}$ is $\overline{L}$-isomorphic to a non-singular plane model $F_{\mathcal{C}}(X,Y,Z)=0$ in $\mathbb{P}^2_{\overline{L}}$, where $F_{\mathcal{C}}(X,Y,Z)$ is a homogenous polynomial of degree $d$ with coefficients in $\overline{L}$, and $g=(d-1)(d-2)/2$.
\end{defn}
The problem for smooth plane curves was addresses by B. Huggins in \cite[Chps. $6$ and $7$]{Hug}. In particular, we have:
\begin{thm}(B. Huggins, \cite[Theorem 6.4.8]{Hug})\label{hugginsconc}
Let $\overline{L}$ be a fixed algebraic closure of a perfect field $L$ of characteristic $p\neq2$. Then, if $\mathcal{C}$ is a smooth plane curve of genus $g\geq3$ defined over $\overline{L}$, then the field of moduli $M_{\overline{L}/L}(\mathcal{C})$ for $\mathcal{C}$, relative to the Galois extension $\overline{L}/L$, 
is a field of definition if the automorphism group $\operatorname{Aut}_+(\mathcal{C})$ is not $\operatorname{PGL}_3(\overline{L})$-conjugate to a diagonal subgroup of $\operatorname{PGL}_3(\overline{L})$, or to one of the Hessian groups $\operatorname{Hess}_*$ with $*\in\{18,36\}$, or a semidirect product $\mathcal{B}\rtimes\mathcal{A}$ for some finite diagonal subgroup $\mathcal{A}$ of $\operatorname{PGL}_3(L)$ and a non-trivial $p$-group $\mathcal{B}$ consisting entirely of elements of the shape $[X:\alpha X+Y:\beta X+\gamma Y+Z]$.
\end{thm}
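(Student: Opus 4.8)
The plan is to reduce the statement, by way of Weil's criterion of descent (Theorem~\ref{weilcocylcle}), to the vanishing of a single cohomological obstruction, and then to establish that vanishing by running through the classification of finite subgroups of $\operatorname{PGL}_3(\overline{L})$ — the exceptional list being, up to conjugacy, exactly the groups for which the reduction breaks down.

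Write $M:=M_{\overline{L}/L}(\mathcal{C})$ and $\Gamma:=\operatorname{Gal}(\overline{L}/M)$. Since $d\geq4$ gives $g=(d-1)(d-2)/2\geq3$ and a smooth plane curve of degree $\geq4$ carries a unique $g^2_d$, the plane embedding of $\mathcal{C}$ is intrinsic: $\operatorname{Aut}_+(\mathcal{C})$ is realized as a finite subgroup $G\subseteq\operatorname{PGL}_3(\overline{L})$, and every isomorphism of smooth plane models is a linear change of coordinates. Fix a plane model $F=0$ of $\mathcal{C}$. By the definition of the field of moduli, for each $\sigma\in\Gamma$ there is $\phi_\sigma\in\operatorname{PGL}_3(\overline{L})$ with ${}^{\sigma}F$ proportional to $F\circ\phi_\sigma$; such a $\phi_\sigma$ is unique up to left translation by $G$, and $\phi_\sigma\cdot{}^{\sigma}\phi_\tau\equiv\phi_{\sigma\tau}\pmod{G}$. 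By Theorem~\ref{weilcocylcle}, $M$ is a field of definition as soon as one can modify $(\phi_\sigma)_\sigma$ to a genuine $1$-cocycle $(a_\sigma\phi_\sigma)_\sigma$ with $a_\sigma\in G$: the associated descent datum for the pair $(\mathbb{P}^2,\mathcal{C})$ then yields a model of $\mathcal{C}$ over $M$ inside a Severi--Brauer surface, which suffices for $M$ to be a field of definition (when $L=\mathbb{R}$ the surface is automatically $\mathbb{P}^2_{\mathbb{R}}$, as $\operatorname{Br}(\mathbb{R})$ has no $3$-torsion). The obstruction $\mathfrak{o}(\mathcal{C})$ to this modification is a class in a degree-two cohomology of $\Gamma$ with coefficients assembled from $G$ and the conjugation action the $\phi_\sigma$ induce on it; it is genuinely nonabelian when $G$ is nonabelian with nontrivial outer Galois action, and for $L=\mathbb{R}$ it lives in an $H^2(\mathbb{Z}/2,-)$. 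Thus the theorem comes down to showing $\mathfrak{o}(\mathcal{C})=0$ whenever $G$ is not $\operatorname{PGL}_3$-conjugate to a group in the exceptional list.

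For this one invokes the classification of finite subgroups of $\operatorname{PGL}_3(\overline{L})$ (Blichfeldt--Mitchell in characteristic $0$, with its modular refinements in characteristic $p$): $G$ is intransitive (fixes a point and a line), imprimitive (normalizes a diagonal subgroup $D$ with $G/D$ a subgroup of $S_3$), primitive (so, up to conjugacy, $A_5$ in either of its two embeddings, $A_6$, $\operatorname{PSL}_2(\mathbb{F}_7)$, or one of the Hessian groups $\operatorname{Hess}_{36}$, $\operatorname{Hess}_{72}$, $\operatorname{Hess}_{216}$), or lies in one of the wild families occurring only in characteristic $p$. One then handles these in turn. If $G$ is intransitive and also fixes a line complementary to its fixed point — automatic in characteristic $0$ by Maschke, and the only remaining intransitive possibility in characteristic $p$ once the unipotent family $\mathcal{B}\rtimes\mathcal{A}$ is removed — the splitting $\mathbb{P}^2=\{\text{point}\}\sqcup\{\text{line}\}$ makes the $\phi_\sigma$ block-diagonal and reduces the problem to the field-of-moduli problem on $\mathbb{P}^1$, which is always solvable, so $\mathfrak{o}(\mathcal{C})=0$; the case the argument cannot reach is precisely $G$ diagonal, where $N_{\operatorname{PGL}_3(\overline{L})}(G)$ contains the whole diagonal torus and the resulting infinite ambiguity cannot be resolved canonically. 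If $G$ is imprimitive but neither diagonal nor the special monomial group $\operatorname{Hess}_{18}=\mathbb{Z}/3\rtimes S_3$, the $S_3$-action on the intrinsic coordinate triangle of $G$ rigidifies the model enough to kill $\mathfrak{o}(\mathcal{C})$. If $G$ is primitive, one uses the explicit structure and normalizer of each candidate to build the coherent family by hand; this works for $A_5$, $A_6$, $\operatorname{PSL}_2(\mathbb{F}_7)$, $\operatorname{Hess}_{72}$, $\operatorname{Hess}_{216}$, whereas $\operatorname{Hess}_{18}$ and $\operatorname{Hess}_{36}$ are exactly the groups whose normalizer in $\operatorname{PGL}_3(\overline{L})$ strictly contains $G$ with even index, which is where this method fails to force $\mathfrak{o}(\mathcal{C})$ to vanish — so, together with the diagonal groups and the wild $p$-family, they must be excluded.

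The main obstacle is twofold. First, setting up $\mathfrak{o}(\mathcal{C})$ correctly: since $G$ may be nonabelian and $\Gamma$ acts on it by outer automorphisms, this is a nonabelian descent problem, and the clean formulation is that $\mathfrak{o}(\mathcal{C})$ is the obstruction to lifting a prescribed class in $H^1(\Gamma,\operatorname{Out}(G))$ to $H^1(\Gamma,\operatorname{Aut}(G))$ (equivalently to $H^1(\Gamma,G)$, after accounting for the center), a class living in an $H^2$ with twisted coefficients. Second, and more laborious, is the uniform case analysis over the classification: for each ``good'' $G$ one must actually exhibit the $1$-cocycle, which in the imprimitive cases requires tracking how $\Gamma$ may permute the intrinsic triangle and pencil, and in characteristic $p$ requires splitting $G$ into its tame part (handled as above) and its wild part, and verifying that the \emph{only} genuinely new obstruction arises from a unipotent $p$-group $\mathcal{B}$ sitting inside a semidirect product $\mathcal{B}\rtimes\mathcal{A}$ of the stated shape. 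Note, finally, that the statement is a sufficient criterion, so only the implication ``$G$ outside the list $\Rightarrow\mathfrak{o}(\mathcal{C})=0$'' needs to be proved; sharpness of the list is neither asserted nor required.
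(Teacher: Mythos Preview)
This theorem is not proved in the present paper; it is quoted from Huggins' thesis \cite[Theorem~6.4.8]{Hug}, so there is no in-paper argument to compare against. Your outline is, in broad strokes, Huggins' own strategy: reformulate descent cohomologically via Theorem~\ref{weilcocylcle} and then traverse the Mitchell classification of finite subgroups of $\operatorname{PGL}_3$, showing the obstruction vanishes for every group outside the stated list.

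One step is phrased too loosely to stand as written. You assert that for intransitive $G$ the problem ``reduces to the field-of-moduli problem on $\mathbb{P}^1$, which is always solvable,'' and then separately carve out the diagonal case. But the $\mathbb{P}^1$ descent problem is \emph{not} always solvable --- it is exactly the hyperelliptic-type obstruction, and it fails precisely when the image of $G$ in $\operatorname{PGL}_2$ (acting on the fixed line) is cyclic. The missing logical link is that an intransitive $G$ consisting of semisimple elements has cyclic image in $\operatorname{PGL}_2$ if and only if $G$ is diagonalizable; \emph{this} is what singles out the diagonal groups among the intransitive ones, and it is not the ``infinite normalizer'' heuristic you offer. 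Once that link is supplied, the intransitive case does follow from Huggins' hyperelliptic result. The imprimitive and primitive cases, and the isolation of the wild $p$-family $\mathcal{B}\rtimes\mathcal{A}$, are sketched plausibly but each requires substantial work to carry out; as it stands, your proposal is a sound plan rather than a completed proof.
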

\subsection{Pseudoreal Riemann surfaces}
The surface $\mathcal{S}$ is called \emph{real} if ${\mathbb R}$ is a field of definition of it; this is equivalent for $\mathcal{S}$ to admit an anticonformal automorphism of order two (as a consequence of Weil's descent theorem \cite{We}). Also, the field of moduli of $\mathcal{S}$ is a subfield of ${\mathbb R}$ if and only if it is isomorphic to its complex conjugate, equivalently, if it admits anticonformal automorphisms \cite{earle, shimura, Silhol}. Those surfaces of genus $g$ with real field of moduli corresponds to the real points of the moduli space ${\mathcal M}_{g}$. Riemann surfaces whose field of moduli is real but are not real are usually called {\it pseudo-real}.

Theorem $1$ in \cite[p. 48]{Sing} assures the existence of pseudoreal Riemann surfaces $\mathcal{S}$ for any genus $g\geq2$. However, it does not
construct algebraic models for $\mathcal{S}$. Moreover, once the genus $g\geq2$ is fixed and the list of all automorphism groups acting on Riemann surfaces of this genus $g$, with their signature and generating vectors, is determined,
then one can classify the automorphism groups
of pseudoreal Riemann surfaces of genus $g$ by using the algorithm recently exposed in \cite{AQC}.
\subsubsection{Pseudoreal-plane Riemann surfaces}
A pseudoreal Riemann surface $\mathcal{S}$ of genus $g\geq3$ admitting a non-singular plane model in $\mathbb{P}^2_{\mathbb{C}}$ is called \emph{pseudoreal-plane}. In this case, $\mathcal{S}$ is non-hyperlliptic of genus $g=(d-1)(d-2)/2$ for some integer $d\geq4$ (cf. \cite[Lemma 4.1]{BaHidQu}).

It has been announced (without a proof) in \cite[p. 136]{Hug} that pseudoreal-plane Riemann surfaces with automorphism group $\operatorname{Hess}_{18}$ do not exist. Furthermore, we have seen explicit examples of pseudoreal-plane Riemann surfaces whose conformal automorphism group is $\operatorname{Hess}_{36}$ or diagonal in \cite[Chp. 7]{Hug}. Therefore, by the virtue of Theorem \ref{hugginsconc}, one concludes:
\begin{cor}
A Riemann surface $\mathcal{S}$ is pseudoreal-plane only if $\operatorname{Aut}_+(\mathcal{S})$ is $\operatorname{PGL}_3(\mathbb{C})$-conjugate to $\operatorname{Hess}_{36}$ or to a diagonal group.
\end{cor}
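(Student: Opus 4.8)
The plan is to read off the corollary from Huggins's descent obstruction, Theorem~\ref{hugginsconc}, specialized to the extension $\mathbb{C}/\mathbb{R}$, and then to discard by hand the two exceptional cases that cannot survive in this setting. First I would set up the hypotheses: let $\mathcal{S}$ be pseudoreal-plane and fix a smooth plane model $\mathcal{C}\colon F_{\mathcal{C}}(X,Y,Z)=0$ over $\mathbb{C}$, which exists with $d\geq4$ and $g=(d-1)(d-2)/2\geq3$ by \cite[Lemma~4.1]{BaHidQu}. Being pseudoreal, $\mathcal{S}$ admits anticonformal automorphisms, so $\mathcal{C}$ is $\mathbb{C}$-isomorphic to $\,{}^{\sigma}\mathcal{C}$ for complex conjugation $\sigma$; hence $U_{\mathbb{C}/\mathbb{R}}(\mathcal{C})=\operatorname{Gal}(\mathbb{C}/\mathbb{R})$ and $M_{\mathbb{C}/\mathbb{R}}(\mathcal{C})=\mathbb{R}$, while, again because $\mathcal{S}$ is pseudoreal, $\mathbb{R}$ is not a field of definition. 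Thus the field of moduli of $\mathcal{C}$ relative to $\mathbb{C}/\mathbb{R}$ fails to be a field of definition.

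Next I would invoke the contrapositive of Theorem~\ref{hugginsconc} with $L=\mathbb{R}$ and $\overline{L}=\mathbb{C}$ (which is perfect of characteristic $0\neq2$): the failure of descent forces $\operatorname{Aut}_+(\mathcal{S})=\operatorname{Aut}_+(\mathcal{C})$ to be $\operatorname{PGL}_3(\mathbb{C})$-conjugate to one of the following --- a finite diagonal subgroup, $\operatorname{Hess}_{18}$, $\operatorname{Hess}_{36}$, or a semidirect product $\mathcal{B}\rtimes\mathcal{A}$ with $\mathcal{A}$ finite diagonal and $\mathcal{B}$ a non-trivial $p$-group of the indicated shape. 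It then remains only to eliminate the last two options. In characteristic $0$ there is no non-trivial $p$-group, so the semidirect-product alternative is simply vacuous. The $\operatorname{Hess}_{18}$ alternative is excluded by the assertion announced in \cite[p.~136]{Hug} that no pseudoreal-plane Riemann surface can have automorphism group $\operatorname{Hess}_{18}$. What survives is precisely ``$\operatorname{Hess}_{36}$ or a diagonal group'', which is the claim; that neither possibility is empty is witnessed by the explicit examples in \cite[Chp.~7]{Hug}.

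The only non-formal ingredient is the $\operatorname{Hess}_{18}$ exclusion, and I expect that to be the main obstacle if one wants a self-contained proof, since \cite{Hug} states it without argument. A direct treatment would fix a smooth plane curve with $\operatorname{Aut}_+\cong\operatorname{Hess}_{18}$, determine its normalizer in $\operatorname{PGL}_3(\mathbb{C})$, and show that whenever such a curve is $\mathbb{C}$-isomorphic to its complex conjugate the resulting Weil cocycle $\{f_\sigma\}$ of Theorem~\ref{weilcocylcle} can always be trivialized --- so the curve descends to $\mathbb{R}$ and cannot be pseudoreal. Everything else reduces to the structural dichotomy already packaged in Theorem~\ref{hugginsconc} together with the observation that the $p$-group case disappears over a field of characteristic zero.
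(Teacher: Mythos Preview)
Your proposal is correct and follows essentially the same route as the paper: the corollary is deduced directly from Theorem~\ref{hugginsconc} together with Huggins's announced exclusion of $\operatorname{Hess}_{18}$ \cite[p.~136]{Hug}. You are slightly more explicit than the paper in spelling out why the semidirect-product $\mathcal{B}\rtimes\mathcal{A}$ case of Theorem~\ref{hugginsconc} is vacuous over $\mathbb{C}$ (the unipotent elements $[X:\alpha X+Y:\beta X+\gamma Y+Z]$ have infinite order in characteristic zero unless trivial), a point the paper passes over in silence; otherwise the arguments coincide.
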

Because of the above results, we were wondering, once the genus $g\geq3$ is fixed, about the existence of pseudoreal-plane Riemann surfaces $\mathcal{S}$ of genus $g$ whose $\operatorname{Aut}_+(\mathcal{S})$ is diagonal. We will see that, in contrast to pseudreal Riemann surfaces, that the answer in general is \textbf{No}, that is they are not always exist. We also characterize $\operatorname{Aut}_+(\mathcal{S})$ for pseudoreal-plane Riemann surfaces and give explicit examples.
\subsection*{Acknowledgments}
I feel fortunate that I had the opportunity of learning from Francesc Bars and doing my PhD under his supervision at UAB, Spain. I also would like to thank Rub\'en A. Hidalgo and Sa\'ul Quispe for their useful comments.

\section{On conformal automorphism groups of plane Riemann surfaces}

\begin{defn}
For a non-zero monomial $cX^iY^jZ^k$
with $c\in\mathbb{C}\setminus\{0\}$, its exponent is defined to be $max\{i,j,k\}$. For a homogenous polynomial $F(X,Y,Z)$, the core of it is
defined to be the sum of all terms of $F$ with the greatest
exponent. Now, let $\mathcal{S}_0$ be a plane Riemann surfaces, a pair $(\mathcal{S},H)$ with
$H\leq \operatorname{Aut}_+(\mathcal{S})$ is said to be a descendant of $\mathcal{S}_0$ if $\mathcal{S}$ is defined
by a homogenous polynomial whose core is a defining polynomial of
$\mathcal{S}_0$ and $H$ acts on $\mathcal{S}_0$ under a suitable change of the
coordinates system, i.e. $H$ is conjugate to a subgroup of
$\operatorname{Aut}_+(\mathcal{S}_0)$.
\end{defn}
By \cite[\S 1-10]{Mit} and the proof of Theorem 2.1 in \cite{Ha}, we conclude:
\begin{thm}[Mitchell \cite{{Mit}}, Harui \cite{Ha}]\label{Harui,Mitchell}
	Let $G$ be a subgroup of conformal automorphisms of a plane Riemann surface $\mathcal{S}$ of degree $d\geq4$. Then, one of the following holds:
	\begin{enumerate}[(i)]
		\item $G$ fixes a line in $\mathbb{P}^2_{\mathbb{C}}$ and a point off this line.
		\item $G$ fixes a triangle $\Delta\subset\mathbb{P}^2_{\mathbb{C}}$, i.e. a set of three non-concurrent lines, and neither line nor a point is leaved invariant. In this case, $(\mathcal{S},G)$ is a descendant of the the Fermat curve $F_d:\,X^d+Y^d+Z^d=0$ or the Klein curve $K_d:\,XY^{d-1}+YZ^{d-1}+ZX^{d-1}=0$.
		\item $G$ is $\operatorname{PGL}_3(\mathbb{C})$-conjugate to a finite primitive subgroup namely, the Klein group
		$\operatorname{PSL}(2,7)$, the icosahedral group $\operatorname{A}_5$, the alternating group
		$\operatorname{A}_6$, the Hessian group $\operatorname{Hess}_{*}$ with $*\in\{36,72,216\}$.
		\end{enumerate}
\end{thm}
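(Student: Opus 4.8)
The plan is to reduce the statement to the classical classification of finite subgroups of $\operatorname{PGL}_3(\mathbb{C})$ and then to read off the geometry. \textbf{Step 1 (linearisation).} Since $d\geq4$, the degree-$d$ plane model is intrinsic to $\mathcal{S}$: for $d=4$ it is the canonical embedding, and for $d\geq5$ the invertible sheaf $\mathcal{O}_{\mathcal{S}}(1)$ is the unique $g^{2}_{d}$ on $\mathcal{S}$ (a classical fact about smooth plane curves). Hence every conformal automorphism of $\mathcal{S}$ preserves $\mathcal{O}_{\mathcal{S}}(1)$, acts linearly on $\mathbb{P}^2_{\mathbb{C}}$, and fixes the defining polynomial $F$ up to a non-zero scalar. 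After a projective change of coordinates we may therefore regard $G$ as a finite subgroup of $\operatorname{PGL}_3(\mathbb{C})$ leaving invariant the smooth curve $\mathcal{C}\colon F=0$ of degree $d$, and it suffices to classify such subgroups up to $\operatorname{PGL}_3(\mathbb{C})$-conjugacy.

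\textbf{Step 2 (the Mitchell--Blichfeldt trichotomy).} By the classical classification of finite subgroups of $\operatorname{PGL}_3(\mathbb{C})$ (Mitchell \cite{Mit}, Blichfeldt), $G$ is conjugate to a group of exactly one of three kinds. If $G$ fixes a point of $\mathbb{P}^2_{\mathbb{C}}$, then, the representation on $\mathbb{C}^3$ being semisimple, $G$ also fixes a line avoiding that point, which is case (i). If $G$ is imprimitive while fixing no point and no line, then its action splits $\mathbb{C}^3$ into three lines permuted transitively, so that $G$ permutes the three vertices of a triangle $\Delta$ and normalises the associated finite diagonal group: this is the situation of case (ii). If $G$ is primitive, it belongs to Blichfeldt's list of primitive finite subgroups of $\operatorname{PGL}_3(\mathbb{C})$, namely the alternating group $\operatorname{A}_6$ (image of Valentiner's group), the Klein group $\operatorname{PSL}(2,7)$, the icosahedral group $\operatorname{A}_5$, and the Hessian groups $\operatorname{Hess}_{36},\operatorname{Hess}_{72},\operatorname{Hess}_{216}$, which is case (iii). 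It remains to place the finite groups that fix a smooth conic but no triangle: the cyclic and dihedral ones fix the pole of the conic and fall into case (i), while $\operatorname{A}_4$ and $\operatorname{S}_4$ fix the associated self-polar triangle and fall into case (ii), and $\operatorname{A}_5$ is the only genuinely new primitive group, already listed in (iii).

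\textbf{Step 3 (descendants in the imprimitive case).} Suppose we are in case (ii). After a change of coordinates take $\Delta=\{XYZ=0\}$; writing $T$ for the diagonal maximal torus of $\operatorname{PGL}_3(\mathbb{C})$, the group $N:=G\cap T$ is finite diagonal and $G/N$ is a transitive subgroup of $\operatorname{S}_3$, hence $\operatorname{A}_3$ or $\operatorname{S}_3$, since a trivial or order-two quotient would leave a vertex or an edge of $\Delta$ invariant, against the hypothesis of (ii). Now expand $F$ in these coordinates and look at its core $F_0$, the sum of the monomials of maximal exponent; it is $G$-semi-invariant, and smoothness of $\mathcal{C}$ forbids $F$, hence $F_0$, from being supported on monomials divisible by a common variable or from vanishing to high order at a vertex of $\Delta$. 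Requiring that the monomials of $F_0$ share an $N$-character, that their set be stable under the $3$-cycle in $G/N$, and that the smoothness constraints hold --- this is Harui's computation in the proof of \cite[Theorem 2.1]{Ha} --- forces $F_0$ to be proportional to $X^d+Y^d+Z^d$ or to $XY^{d-1}+YZ^{d-1}+ZX^{d-1}$, with $G$ conjugate into the linear automorphism group of the corresponding model. By the definition of descendant recalled above, this says precisely that $(\mathcal{S},G)$ is a descendant of the Fermat curve $F_d$ or of the Klein curve $K_d$.

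\textbf{Main difficulty.} The substance of the argument lies in Step 3: the monomial bookkeeping that pins the core down to the two displayed families while keeping $\mathcal{C}$ smooth, including the separation of the Fermat case (compatible with $\operatorname{S}_3$) from the Klein case (requiring $G/N$ cyclic). The group-theoretic input of Step 2 --- Mitchell's trichotomy together with Blichfeldt's list of primitive finite subgroups of $\operatorname{PGL}_3(\mathbb{C})$ --- is classical, and the only care needed there is in matching the intransitive and conic-fixing cases to (i) and (ii). For the group classification I would follow Blichfeldt and Mitchell \cite{Mit}, and for the identification of the cores I would follow Harui's case analysis in \cite{Ha}.
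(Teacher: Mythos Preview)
The paper does not give a proof of this theorem at all: it is stated as a citation, introduced by ``By \cite[\S 1-10]{Mit} and the proof of Theorem 2.1 in \cite{Ha}, we conclude,'' and then used as a black box throughout. So there is no ``paper's own proof'' to compare against; the intended argument is literally the one in Mitchell and Harui that you are sketching.

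Your outline is a faithful summary of that route. Step~1 (uniqueness of the $g^2_d$, hence linearisation of $\Aut_+(\mathcal{S})$ inside $\PGL_3(\mathbb{C})$) and Step~2 (Mitchell--Blichfeldt trichotomy into intransitive/imprimitive/primitive, with the primitive list) are exactly what \cite{Mit} supplies. Step~3 (core analysis forcing the Fermat or Klein model in the imprimitive case with $G/N\in\{\operatorname{A}_3,\operatorname{S}_3\}$) is precisely Harui's computation in \cite[Theorem~2.1]{Ha}. Two small points worth tightening in a full write-up: in Step~2, the phrase ``the representation on $\mathbb{C}^3$ being semisimple'' tacitly uses a lift of $G$ to a finite subgroup of $\operatorname{GL}_3(\mathbb{C})$ (e.g.\ via the preimage in $\operatorname{SL}_3$), which you should make explicit; and your aside about conic-fixing groups is correct but organisationally redundant, since the trichotomy already absorbs cyclic/dihedral into~(i), $\operatorname{A}_4,\operatorname{S}_4$ into~(ii), and $\operatorname{A}_5$ into~(iii).
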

We use $\zeta_n$ for a fixed primitive $n$-th root of unity
inside $\mathbb{C}$.
\begin{defn}\label{homologydefn}
By an \emph{homology} of period $n\in\mathbb{Z}_{\geq1}$, we mean a projective linear transformation of the plane $\mathbb{P}^2_{\mathbb{C}}$, which acts up to $\operatorname{PGL}_3(\mathbb{C})$-conjugation, as $$(X:Y:Z)\mapsto (\zeta_{n}X:Y:Z).$$
Such a transformation fixes pointwise a line (its axis) and a point off this line (its center). Otherwise, it is called a \emph{non-homology}
\end{defn}
\begin{thm}[Mitchell \cite{Mit}]\label{homologies}
Let $G$ be a finite group of $\operatorname{PGL}_3(\mathbb{C})$. If $G$ contains an homology of period $n\geq4$, then it fixes a point, a line or a triangle. Moreover, the Hessian group $\operatorname{Hess}_{216}$ is the only finite subgroup of $\operatorname{PGL}_3(\mathbb{C})$ that contains homologies of period $n=3$, and does not leave invariant a point, a line or a triangle.
\end{thm}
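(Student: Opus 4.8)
The plan is to deduce the statement from the classification of finite subgroups of $\operatorname{PGL}_3(\mathbb{C})$ together with a direct inspection of the homologies inside the exceptional groups. Recall the dichotomy underlying Theorem~\ref{Harui,Mitchell} (Mitchell~\cite{Mit}): a finite subgroup $G\le\operatorname{PGL}_3(\mathbb{C})$ either leaves invariant a point, a line or a triangle, or else is \emph{primitive}, and in the latter case $G$ is $\operatorname{PGL}_3(\mathbb{C})$-conjugate to one of the six groups $\operatorname{PSL}(2,7)$, $A_5$, $A_6$, $\operatorname{Hess}_{36}$, $\operatorname{Hess}_{72}$, $\operatorname{Hess}_{216}$, none of which leaves invariant a point, a line or a triangle. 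Hence it suffices to prove: \textbf{(a)} none of these six groups contains a homology of period $n\ge4$; and \textbf{(b)} among the six, only $\operatorname{Hess}_{216}$ contains a homology of period $3$. Granting (a) and (b): a finite $G$ carrying a homology of period $\ge4$ cannot be primitive, hence it fixes a point, a line or a triangle; and $\operatorname{Hess}_{216}$ is then the unique finite subgroup not fixing a point, a line or a triangle that contains a homology of period $3$.

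The tool for (a)--(b) is the spectral description of a homology: $h\in\operatorname{PGL}_3(\mathbb{C})$ of order $n$ is a homology of period $n$ precisely when some (equivalently any) lift $\widetilde h\in\operatorname{GL}_3(\mathbb{C})$ has eigenvalue multiset $\{\mu,\mu,\mu\zeta_n\}$ for some $\mu\in\mathbb{C}^{\times}$ — equivalently, $h$ fixes a line of $\mathbb{P}^2_{\mathbb{C}}$ pointwise, equivalently $\widetilde h-\mu I$ has rank $1$. Consequently, if $\chi$ denotes the $3$-dimensional (projective) character of a suitable central extension of $G$, then a homology of period $n$ forces $|\chi(h)|=|2+\zeta_n|$, a number which equals $1$ for $n=2$, $\sqrt3$ for $n=3$, $\sqrt5$ for $n=4$, $\sqrt7$ for $n=6$, and lies strictly between $2$ and $3$ for $n=5,7$. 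One now goes through the six groups with this in hand.

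For $\operatorname{PSL}(2,7)$, $A_5$ and $A_6$ the relevant $3$-dimensional representations are the classical ones and the character values on the classes of orders $3,4,5,7$ (namely $0$, $\pm 1$, $(1\pm\sqrt5)/2$, $(-1\pm\sqrt{-7})/2$) are never of the form $\mu(2+\zeta_n)$; so those elements act with three distinct eigenvalues, and the only homologies are the involutions, which act as harmonic homologies of period $2$. For $\operatorname{Hess}_{36}$ and $\operatorname{Hess}_{72}$ the Sylow $3$-subgroup is ($\operatorname{PGL}_3(\mathbb{C})$-conjugate to) the translation group $(\mathbb{Z}/3)^2$ of the Hesse pencil, whose nontrivial elements are conjugate to a cyclic permutation of the coordinates or to a product of such with $[X:Y:Z]\mapsto[X:\zeta_3Y:\zeta_3^2Z]$ — all with three distinct eigenvalues; being of the form $(\mathbb{Z}/3)^2\rtimes P$ with $P$ a $2$-group, both groups have every element of order $3$ inside this $(\mathbb{Z}/3)^2$, so neither contains a homology of period $3$, while the order-$4$ elements (all conjugate, acting with eigenvalues $1,\pm i$) and the order-$2$ elements are not homologies either. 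Finally $\operatorname{Hess}_{216}$ is the stabilizer in $\operatorname{PGL}_3(\mathbb{C})$ of the Hesse pencil $\lambda(X^3+Y^3+Z^3)+\mu XYZ$, and it visibly contains $\operatorname{diag}(1,1,\zeta_3)$, a homology of period $3$ whose axis $\{Z=0\}$ is one of the twelve lines $XYZ=0$, $X^3+Y^3+Z^3=0$ of the Hesse configuration (each of the twelve lines being such an axis); the same spectral bound shows that no element of $\operatorname{Hess}_{216}$ of order $\ge4$ (the only orders $\ge4$ occurring being $4$ and $6$) is a homology. This proves (a) and (b), hence the theorem.

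The step I expect to demand the most care is the Hessian bookkeeping: one must take the \emph{translation} copy of $(\mathbb{Z}/3)^2$ — all of whose nontrivial elements are non-homologies — as the Sylow $3$-subgroup of $\operatorname{Hess}_{36}$ and $\operatorname{Hess}_{72}$, rather than the diagonal copy $\{\operatorname{diag}(\zeta_3^a,\zeta_3^b,1)\}$, which does contain period-$3$ homologies. The two copies are not $\operatorname{PGL}_3(\mathbb{C})$-conjugate precisely because conjugacy preserves the homology property, and it is this that forces the period-$3$ homologies present in $\operatorname{Hess}_{216}$ to vanish in the subgroups $\operatorname{Hess}_{72}\supset\operatorname{Hess}_{36}$; it is also the point where the finer geometry of the Hesse configuration (its four syzygetic triangles and twelve lines) has to be brought in.
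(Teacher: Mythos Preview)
The paper does not supply a proof of this statement; it is quoted from Mitchell~\cite{Mit} as a classical input and used without argument. Your strategy---reduce to the six primitive groups via the Mitchell/Harui dichotomy and then rule out homologies of large period in each by inspecting eigenvalue patterns in the relevant $3$-dimensional (projective) representation---is the natural one and is essentially Mitchell's own route, so there is nothing to compare methodologically.

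That said, several details need tightening. First, every involution in $\operatorname{PGL}_3(\mathbb{C})$ \emph{is} a homology of period~$2$ (any lift has eigenvalue multiset $\{1,1,-1\}$ up to scalar), so your assertion that the order-$2$ elements of $\operatorname{Hess}_{36}$ and $\operatorname{Hess}_{72}$ ``are not homologies either'' is simply false; fortunately the theorem only concerns periods $n\geq 3$, so this slip is harmless. Second, the claim that $|2+\zeta_n|$ lies strictly between $2$ and $3$ for $n=5,7$ fails for some primitive roots (for instance $|2+\zeta_7^{3}|\approx 1.18$); the character-value test must be run against \emph{all} $|2+\zeta_n^{k}|$, or---cleaner---you should identify the eigenvalue triples directly: in the $3$-dimensional representations of $A_5$ and $\operatorname{PSL}(2,7)$ the order-$5$ and order-$7$ elements act with eigenvalues $\{1,\zeta_5^{\pm1}\}$ (resp.\ $\{1,\zeta_5^{\pm2}\}$) and $\{\zeta_7,\zeta_7^{2},\zeta_7^{4}\}$ (resp.\ its complement), all distinct. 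Third, for $A_6$ one must pass to the triple cover $3.A_6$ and its honest $3$-dimensional representation; you gloss over this, and the order-$3$ classes there deserve an explicit check. Finally, your structural observation for the Hessian groups is exactly the right one: the normal translation subgroup $(\mathbb{Z}/3\mathbb{Z})^2$---all of whose nontrivial elements have trace $0$, hence three distinct eigenvalues---is the \emph{full} Sylow $3$-subgroup of $\operatorname{Hess}_{36}$ and $\operatorname{Hess}_{72}$ (indices $4$ and $8$), but only an index-$3$ piece of the Sylow $3$-subgroup of $\operatorname{Hess}_{216}$, and the extra order-$3$ elements there (e.g.\ $\operatorname{diag}(1,1,\zeta_3)$) are precisely the period-$3$ homologies. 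That is the crux, and you have it.
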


\subsection{Preliminaries on diagonal conformal automorphism groups}\label{structurediagonalgroups}
The group of all $3\times3$ projective linear matrices of diagonal shapes is denoted by $\operatorname{D}(\mathbb{C})$. A finite non-trivial group $G$, representable inside $\operatorname{PGL}_3(\mathbb{C})$, is said to be \emph{diagonal} if $\varrho(G)$ is $\operatorname{PGL}_3(\mathbb{C})$-conjugate to a subgroup of $\operatorname{D}(\mathbb{C})$, for some injective representation $\varrho:G\hookrightarrow\operatorname{PGL}_3(\mathbb{C})$.

Firstly, we show:
\begin{prop}\label{classific.}
Let $G$ be a finite non-trivial diagonal group, consisting entirely of homologies. Then, it is either cyclic or conjugate to  $$\varrho_0(\Z/2\Z\times\Z/2\Z):=\langle\operatorname{diag}(1,-1,1),\operatorname{diag}(1,1,-1)\rangle.$$
\end{prop}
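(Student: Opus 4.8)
The plan is purely combinatorial, working with the eigenvalues of diagonal matrices. Recall first that a non-identity diagonal element $\operatorname{diag}(a,b,c)$ of $\operatorname{PGL}_3(\mathbb{C})$ is a homology if and only if exactly two of its three entries coincide: if all three are equal it is the identity, and if all three are distinct it fixes three points and three lines but no line pointwise, hence is a non-homology in the sense of Definition~\ref{homologydefn}. When exactly two entries agree, the index of the remaining entry is well defined; I call it the \emph{center index} of the homology. Since $G\leq\operatorname{D}(\mathbb{C})$ and every non-identity element of $G$ is a homology, each such element has a center index in $\{1,2,3\}$, and (choosing the scalar representative with the repeated entry normalized to $1$) it has one of the forms $\operatorname{diag}(t,1,1)$, $\operatorname{diag}(1,t,1)$, $\operatorname{diag}(1,1,t)$ with $t\neq1$, according to its center index.

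First I would dispose of the easy case in which all non-identity elements of $G$ have the same center index, say $1$. Then $G$ is contained in $\{\operatorname{diag}(t,1,1):t\in\mathbb{C}^{*}\}\cong\mathbb{C}^{*}$, and a finite subgroup of $\mathbb{C}^{*}$ is cyclic; hence $G$ is cyclic and there is nothing more to prove.

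So assume two distinct center indices occur among the non-identity elements of $G$; after conjugating by a permutation matrix (which keeps $G$ diagonal) we may assume these are $1$ and $2$. The crucial point is a rigidity phenomenon: if $\operatorname{diag}(a,1,1)\in G$ has center index $1$ and $\operatorname{diag}(1,b,1)\in G$ has center index $2$ (so $a\neq1$ and $b\neq1$), then their product $\operatorname{diag}(a,b,1)$ is a non-identity element of $G$, hence a homology, hence two of $a,b,1$ must coincide; as $a\neq1\neq b$ this forces $a=b$. Applying this with one fixed non-identity center-$2$ element forces every non-identity center-$1$ element to equal $\operatorname{diag}(a,1,1)$ for one and the same $a\neq1$, and symmetrically every non-identity center-$2$ element equals $\operatorname{diag}(1,a,1)$. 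Their product $\operatorname{diag}(a,a,1)=\operatorname{diag}(1,1,a^{-1})$ is then a non-identity homology with center index $3$, so center index $3$ occurs as well; repeating the rigidity argument for the pair of center indices $(1,3)$ shows that the unique non-identity center-$3$ element equals both $\operatorname{diag}(1,1,a)$ and $\operatorname{diag}(1,1,a^{-1})$, whence $a^{2}=1$, i.e. $a=-1$. Consequently the non-identity elements of $G$ are exactly $\operatorname{diag}(-1,1,1)$, $\operatorname{diag}(1,-1,1)$, $\operatorname{diag}(1,1,-1)$, and therefore $G=\langle\operatorname{diag}(1,-1,1),\operatorname{diag}(1,1,-1)\rangle=\varrho_0(\mathbb{Z}/2\mathbb{Z}\times\mathbb{Z}/2\mathbb{Z})$.

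The only step that needs an idea rather than bookkeeping is the rigidity observation: two diagonal homologies with different center indices interact so restrictively that their distinguished eigenvalues are forced to agree, which then cascades to produce a third center index and collapses everything to $2$-torsion. The remaining ingredients — that a diagonal matrix is a homology iff exactly two of its entries coincide, and that finite subgroups of $\mathbb{C}^{*}$ are cyclic — are standard; the one thing to check carefully is that the center index is genuinely well defined and unaffected by conjugation by permutation matrices, both of which are immediate.
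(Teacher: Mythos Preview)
Your proof is correct and, while sharing the same underlying observation as the paper's (that a product of two diagonal homologies with different centers will typically fail to be a homology), it is organized differently. The paper first disposes of the case where every element has order $\leq 2$ by noting that the diagonal $2$-torsion in $\operatorname{PGL}_3(\mathbb{C})$ is exactly $\varrho_0(\Z/2\Z\times\Z/2\Z)$; then, assuming some $\phi$ of order $m>2$ exists, it shows that any $\psi$ with a different axis would yield a non-homology $\phi^s\psi$ for a suitable power $s$, forcing all elements to share the axis of $\phi$ and hence $G$ to be cyclic. Your argument instead splits on whether all non-identity elements share a center index, and in the mixed case extracts a sharper constraint: the distinguished eigenvalues of homologies with different center indices must literally coincide. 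This rigidity cascades to pin down $G=\varrho_0$ without ever invoking an element of order $>2$ or the preliminary classification of diagonal $2$-torsion. The trade-off is that the paper's route generalizes more transparently to statements of the form ``a homology of large order forces a common axis,'' while yours is cleaner and more self-contained for the proposition as stated. One cosmetic point: your closing remark that the center index is ``unaffected by conjugation by permutation matrices'' is slightly off---such conjugation permutes center indices rather than fixing them---but this is exactly what you use and does not affect the argument.
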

\begin{proof}
Fix an injective representation $\varrho(G)\leq\operatorname{D}(\mathbb{C})$ and suppose that  $\varrho(G)\neq\varrho_0(\Z/2\Z\times\Z/2\Z)$. Hence, there must be an homology $\phi\in\varrho(G)$ of order $m>2$, since $\varrho_0(\Z/2\Z\times\Z/2\Z)$ is the unique non-cyclic diagonal subgroup whose elements have orders at most $2$. 
There is no loss of generality to assume that $\phi=\operatorname{diag}(1,1,\zeta_m)$, in particular its axis is the reference line $L_3:Z=0$ and its center is the reference point $P_1=(1:0:0)$. Take $\psi\in\varrho(G)\setminus\langle\phi\rangle$ (if $\varrho(G)\setminus\langle\phi\rangle=\emptyset$, then $\varrho(G)$ is cyclic and there is nothing to prove further). If $\psi$ has a different axis from $L_3$, then $\phi^s\psi\in\varrho(G)$ is a non-homology for a suitable choice of the integer $s$ because $m>2$; for example, write $\psi$ as $\operatorname{diag}(1,\zeta_{m'},1)$ for some integer $m'>1$, hence $\phi^s\psi=\operatorname{diag}(1,\zeta_{m'},\zeta^s_m)$. So, we may set $s=1$ when $m\neq m'$, and $s=2$ otherwise. In both cases, $\phi^s\psi$ is a non-homology in $\varrho(G)$, which conflicts our assumption that $\varrho(G)$ is made entirely of homologies (Definition \ref{homologydefn}). Therefore, all elements of $\varrho(G)$ admit the same axis and the same center, i.e. each is of the shape $\operatorname{diag}(1,1,\zeta_n)$ for some $n\in\mathbb{N}$. Consequently, $\varrho(G)$ is contained in the cyclic group generated by $\operatorname{diag}(1,1,\zeta_{n_0})$, where $n_0$ is the least common multiple of the orders of the elements of $\varrho(G)$. Thus $\varrho(G)$ is itself cyclic.
\end{proof}
\begin{defn}[The Hessian groups]\label{hessiandefn}
By $\operatorname{Hess}_{18}$ we mean the group of order $18$ generated by $S:=[X:\zeta_3Y:\zeta_3^2],T:=[Y:Z:X]$ and $R:=[X:Z:Y]$. The group $\operatorname{Hess}_{36}$ is the one generated by $\operatorname{Hess}_{18}$ and $V:=[X+Y+Z:X+\zeta_3Y+\zeta_3^2Z:X+\zeta_3^2Y+\zeta_3Z]$.
\end{defn}
\begin{prop}\label{hessiansnotdiagonal}
The Hessian groups $\operatorname{Hess}_*$, for $*\in\{18,36\}$, are not diagonal.
\end{prop}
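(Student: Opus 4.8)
The plan is to reduce the statement to the single elementary fact that $\operatorname{D}(\mathbb{C})$ is abelian. Indeed, $\operatorname{D}(\mathbb{C})$ is the image in $\operatorname{PGL}_3(\mathbb{C})$ of the group of invertible diagonal $3\times 3$ matrices, which is abelian; hence every subgroup of $\operatorname{D}(\mathbb{C})$ is abelian, and therefore, by the very definition of ``diagonal'' recalled in Subsection \ref{structurediagonalgroups}, every diagonal group is abelian as an abstract group (an injective representation cannot land, even after conjugation, in an abelian group unless the source is abelian). So it suffices to verify that $\operatorname{Hess}_{18}$ and $\operatorname{Hess}_{36}$ are non-abelian.

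For $\operatorname{Hess}_{18}=\langle S,T,R\rangle$ I would simply point to two of the generators of Definition \ref{hessiandefn} that fail to commute. Conjugating $S=[X:\zeta_3 Y:\zeta_3^2 Z]$ by $R=[X:Z:Y]$ interchanges the last two diagonal entries, so $RSR^{-1}=[X:\zeta_3^2 Y:\zeta_3 Z]$, which in $\operatorname{PGL}_3(\mathbb{C})$ is exactly $S^{-1}$; since $S$ has order $3$ we have $S\neq S^{-1}$, whence $RS\neq SR$. Thus $\operatorname{Hess}_{18}$ is non-abelian. Because $\operatorname{Hess}_{36}$ is generated by $\operatorname{Hess}_{18}$ and $V$ (Definition \ref{hessiandefn}), it contains the non-abelian subgroup $\operatorname{Hess}_{18}$ and is itself non-abelian. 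Combined with the first paragraph, neither $\operatorname{Hess}_{18}$ nor $\operatorname{Hess}_{36}$ can be diagonal.

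There is no serious obstacle here; the only points needing a little care are keeping track of overall scalar factors when working in $\operatorname{PGL}_3(\mathbb{C})$ rather than in $\operatorname{GL}_3(\mathbb{C})$ (so that the comparison $RSR^{-1}=S^{-1}\neq S$ is read modulo scalars, and similarly that $RS$ and $SR$ differ by no scalar), and, for a fully self-contained write-up, spelling out the one-line reason that $\operatorname{D}(\mathbb{C})$ is abelian. Any other pair exhibiting non-commutativity (for instance $R$ against the order-$3$ cyclic part, using $RTR^{-1}=T^{-1}$) would serve equally well, but the computation with $R$ and $S$ is the shortest.
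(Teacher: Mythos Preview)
Your argument is correct and is in fact cleaner than the paper's own proof. You use only the blanket observation that $\operatorname{D}(\mathbb{C})$ is abelian, so any group admitting an injective representation into (a conjugate of) $\operatorname{D}(\mathbb{C})$ must itself be abelian; then a single commutator check, $RSR^{-1}=S^{-1}\neq S$, dispatches both Hessian groups at once. The paper instead argues inside a hypothetical diagonal embedding: it first locates a non-homology $\phi$ of order $3$ in $\varrho(\operatorname{Hess}_*)$, then invokes the presence of a second order-$3$ element $\psi$ commuting with $\phi$ (coming from the $\mathbb{Z}/3\mathbb{Z}\times\mathbb{Z}/3\mathbb{Z}$ inside $\operatorname{Hess}_*$) and claims this forces $\psi$ to be a coordinate $3$-cycle modulo $\operatorname{D}(\mathbb{C})$, hence non-diagonal. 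Your route avoids that more delicate centralizer analysis entirely and yields, as a bonus, the general principle that \emph{no} non-abelian finite subgroup of $\operatorname{PGL}_3(\mathbb{C})$ is diagonal in the sense of Subsection~\ref{structurediagonalgroups}. The only cost is that your proof does not exhibit any finer structure of the Hessian groups, but for the present proposition none is needed.
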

\begin{proof}
Assume on the contrary that $\varrho(\operatorname{Hess}_*)\leq\operatorname{D}(\mathbb{C})$ for some $\varrho$.
By Definition \ref{hessiandefn}, $\varrho(\operatorname{Hess}_*)$ contains a non-homology $\phi$ of order $3$, and we may write it as $\operatorname{diag}(1,\zeta_3,\zeta_3^2)$.
Moreover, there should be another element $\psi$ of order $3$, such that $\psi\phi=\phi\psi$. Hence $\psi$ should be of the shape $\{[Y:Z:X],\,[Z:X:Y]\}$ modulo $\operatorname{D}(\mathbb{C})$. Thus $\psi\notin\operatorname{D}(\mathbb{C})$, which conflicts the fact $\varrho(\operatorname{Hess}_*)\leq\operatorname{D}(\mathbb{C})$.
\end{proof}

\begin{lem}[Mitchell \cite{Mit}]\label{fixedpoints}
Let $\phi$ be a $3\times3$ complex projective linear transformation of order $n\in\mathbb{Z}_{\geq1}$.
\begin{enumerate}[(i)]
  \item If $\phi$ is an homology, then the fixed points of $\phi$ consists entirely of its center and all points on its axis. In particular, every triangle whose set of vertices is pointwise fixed by $\phi$ contains its center as a vertex.
  \item If $\phi$ is a non-homology, then it fixes exactly three points. In particular, there is a unique triangle whose vertices are pointwise fixed by $\phi$.
\end{enumerate}
\end{lem}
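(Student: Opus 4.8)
The plan is to reduce everything to the eigenspace decomposition of a linear lift of $\phi$. First I would observe that, since $\phi$ has finite order $n$, any lift $A\in\operatorname{GL}_3(\mathbb{C})$ of $\phi$ satisfies $A^n=cI$ for some $c\in\mathbb{C}^{\times}$, so the minimal polynomial of $A$ divides $T^n-c$; as the latter has simple roots, $A$ is diagonalisable. Choosing a basis of eigenvectors of $A$, a point $P\in\mathbb{P}^2_{\mathbb{C}}$ is fixed by $\phi$ precisely when some representative of $P$ is an eigenvector of $A$, so the fixed locus of $\phi$ is the union of the projectivisations of the eigenspaces of $A$. Everything then follows from bookkeeping on the multiplicities of the eigenvalue ratios $\lambda_i/\lambda_j$.

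For (i): by Definition~\ref{homologydefn} I may take $A=\operatorname{diag}(\zeta_n,1,1)$ with $n\geq2$. Then $A$ has the eigenvalue $1$ with $2$-dimensional eigenspace $\{X=0\}$ and the eigenvalue $\zeta_n\neq1$ with eigenline $\langle(1,0,0)\rangle$, so the fixed points of $\phi$ are exactly the points of the axis $L:X=0$ together with the center $P_1=(1:0:0)$: if $(a:b:c)$ is fixed with $(b,c)\neq(0,0)$, the scalar relating $(\zeta_n a,b,c)$ to $(a,b,c)$ must be $1$, forcing $a=0$. For the final clause I would invoke the elementary fact that the three vertices of a triangle are never collinear; indeed, if $\ell_1\cap\ell_2$, $\ell_1\cap\ell_3$, $\ell_2\cap\ell_3$ lay on a common line $m$, then $\ell_1=\ell_2=\ell_3=m$, contradicting non-concurrency. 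Since the axis $L$ is a single line it contains at most two of the three vertices of such a triangle, so at least one vertex lies off $L$ and must therefore be the center.

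For (ii): I would fix a lift $A=\operatorname{diag}(\lambda_1,\lambda_2,\lambda_3)$. If two of the ratios $\lambda_i/\lambda_j$ equalled $1$, then $A$ would be scalar and $\phi=\operatorname{id}$, a homology of period $1$; if exactly one ratio equalled $1$, then after scaling $A$ would be $\operatorname{PGL}_3(\mathbb{C})$-conjugate to $\operatorname{diag}(\zeta_m,1,1)$ for some $m$, again a homology. As $\phi$ is a non-homology, all three ratios differ from $1$, each eigenspace of $A$ is a line, and $\phi$ has exactly the three fixed points $(1:0:0)$, $(0:1:0)$, $(0:0:1)$. These are not collinear, hence determine a unique triangle, the reference triangle with sides $\{X=0\},\{Y=0\},\{Z=0\}$; and any triangle with all vertices fixed by $\phi$ must have its three vertices among these three points, so it coincides with the reference triangle.

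I do not expect a genuine obstacle here: the content is linear algebra over $\mathbb{C}$ together with two incidence observations. The only points needing care are the correct translation between eigenvectors of the lift and projective fixed points (handling the scalar ambiguity, and the degenerate case $n=1$, $\phi=\operatorname{id}$), and the easily overlooked fact that the vertices of a triangle are never collinear, which is exactly what powers both the ``center is a vertex'' clause in (i) and the uniqueness clause in (ii).
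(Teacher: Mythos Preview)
Your argument is correct and is the natural way to prove this lemma: diagonalise a linear lift of $\phi$ (possible since $\phi$ has finite order), read off the fixed locus as the union of projectivised eigenspaces, and then distinguish the case of a repeated eigenvalue (homology: a line plus an isolated point) from that of three distinct eigenvalues (non-homology: exactly three non-collinear points). The incidence observation that the three pairwise intersections of non-concurrent lines are never collinear is exactly what is needed for the two ``in particular'' clauses, and you handle it cleanly.

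There is nothing to compare with in the paper itself: Lemma~\ref{fixedpoints} is stated there with attribution to Mitchell~\cite{Mit} and is \emph{not} proved in the paper; it is quoted as a classical fact from Mitchell's 1911 classification of ternary linear groups. Your self-contained linear-algebra proof is therefore a genuine addition rather than a reproduction, and it is more elementary and transparent than tracking the statement through Mitchell's original memoir. The only cosmetic point I would tighten is the degenerate case $n=1$: for the identity the notions of axis and center are undefined, so both parts of the lemma should be read for $n\geq2$ (and indeed a non-homology cannot have order~$1$). You flag this yourself, so there is no gap.
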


Let $\varrho(G)$ be a finite non-trivial group in $\operatorname{PGL}_3(\mathbb{C})$. If $\mathcal{F}_{\varrho,G}(X,Y,Z)=0$ is a family of plane Riemann surfaces in $\mathbb{P}^2_{\mathbb{C}}$, with conformal automorphism group exactly $\varrho(G)$, then isomorphisms between two curves in the same family $\mathcal{F}_{\varrho,G}(X,Y,Z)=0$ (in particular with identical conformal automorphism group) are clearly given by $3\times3$ projective matrices in the normalizer $N_{\varrho(G)}(\mathbb{C})$ of $\varrho(G)$ in $\operatorname{PGL}_3(\mathbb{C})$. Therefore, it is practical to compute the normalizer for each case.
\begin{prop}[Normalizer]\label{normalizerdetyermination}
Let $\varrho(G)\leq\operatorname{D}(\mathbb{C})$ be a finite non-trivial group.
\begin{enumerate}[(i)]
  \item If $\varrho(G)$ contains a non-homology $\phi=\operatorname{diag}(\zeta_n^a,\zeta_n^b,1)$, then $N_{\varrho(G)}(\mathbb{C})=\langle\operatorname{D}(\mathbb{C}), H\rangle$ for some $H\leq\tilde{\operatorname{S}}_3$, where $\tilde{\operatorname{S}}_3$ is the symmetry group $\langle[X:Z:Y],[Z:X:Y]\rangle$ of order $6$.
  \item If $\varrho(G)$ is generated by an homology $\phi=\operatorname{diag}(1,1,\zeta_n)$ for some $n\in\mathbb{Z}_{\geq2}$, then  $N_{\varrho(G)}(\mathbb{C})=\operatorname{GL}_{2,Z}(\mathbb{C})$, where $\operatorname{GL}_{2,Z}(\mathbb{C})$ is the group of all projective linear matrices of the shape
$$\left(
\begin{array}{ccc}
\ast & \ast & 0 \\
\ast& \ast & 0 \\
0 & 0 & 1 \\
\end{array}
\right).$$
\end{enumerate}
\end{prop}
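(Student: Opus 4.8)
The plan is to prove both inclusions in each part, the ``$\supseteq$'' directions being immediate and the ``$\subseteq$'' directions resting on the rigidity of fixed-point configurations recorded in Lemma \ref{fixedpoints}. For the easy containments: since $\operatorname{D}(\mathbb{C})$ is abelian and contains $\varrho(G)$, every diagonal matrix commutes with all of $\varrho(G)$, so $\operatorname{D}(\mathbb{C})\subseteq N_{\varrho(G)}(\mathbb{C})$ in (i); and in (ii) a direct block-matrix computation shows that every element of $\operatorname{GL}_{2,Z}(\mathbb{C})$, being of the form $\operatorname{diag}(A,1)$ with $A\in\operatorname{GL}_2(\mathbb{C})$, in fact centralizes $\phi=\operatorname{diag}(1,1,\zeta_n)$, so $\operatorname{GL}_{2,Z}(\mathbb{C})\subseteq N_{\varrho(G)}(\mathbb{C})$. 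The structural fact I would isolate at the outset is that conjugation by $M\in\operatorname{PGL}_3(\mathbb{C})$ is a collineation of $\mathbb{P}^2_{\mathbb{C}}$: hence it sends homologies to homologies and non-homologies to non-homologies, and it carries the fixed locus of $\phi$ onto that of $M\phi M^{-1}$, together with the axis--center data in the homology case.

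For part (i), let $\phi=\operatorname{diag}(\zeta_n^a,\zeta_n^b,1)\in\varrho(G)$ be the given non-homology. Being a non-homology forces its diagonal entries to be pairwise distinct, so Lemma \ref{fixedpoints}(ii) identifies its fixed locus with the set of reference points $\{P_1,P_2,P_3\}$. If $M\in N_{\varrho(G)}(\mathbb{C})$, then $M\phi M^{-1}\in\varrho(G)\leq\operatorname{D}(\mathbb{C})$ is again a non-homology, hence a diagonal matrix with pairwise distinct entries, whose fixed locus is therefore also $\{P_1,P_2,P_3\}$; but that fixed locus equals $\{MP_1,MP_2,MP_3\}$. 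Thus $M$ permutes the three reference points, which forces $M=D\tau$ with $D\in\operatorname{D}(\mathbb{C})$ and $\tau$ a permutation matrix, i.e. $M\in\langle\operatorname{D}(\mathbb{C}),\tilde{\operatorname{S}}_3\rangle=\operatorname{D}(\mathbb{C})\rtimes\tilde{\operatorname{S}}_3$. Finally, since $D$ commutes with the diagonal group $\tau\varrho(G)\tau^{-1}$, the element $D\tau$ normalizes $\varrho(G)$ precisely when $\tau$ does; hence $H:=N_{\varrho(G)}(\mathbb{C})\cap\tilde{\operatorname{S}}_3$ is a subgroup of $\tilde{\operatorname{S}}_3$ and $N_{\varrho(G)}(\mathbb{C})=\langle\operatorname{D}(\mathbb{C}),H\rangle$.

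For part (ii), here $\varrho(G)=\langle\phi\rangle$ with $\phi=\operatorname{diag}(1,1,\zeta_n)$ an homology of axis $L_3:Z=0$ and center $P_3=(0:0:1)$ (note $\phi$ has order exactly $n$ in $\operatorname{PGL}_3(\mathbb{C})$), so by Lemma \ref{fixedpoints}(i) its fixed locus is $L_3\cup\{P_3\}$. Given $M\in N_{\varrho(G)}(\mathbb{C})$ we have $M\phi M^{-1}=\phi^k$ for some $k$ with $\gcd(k,n)=1$ (conjugation preserves order), and $\phi^k=\operatorname{diag}(1,1,\zeta_n^k)$ is an homology with axis $L_3$ and center $P_3$, while its fixed locus also equals $M(L_3)\cup\{MP_3\}$. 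Comparing, $M(L_3)\cup\{MP_3\}=L_3\cup\{P_3\}$; a line cannot be contained in the union of a line and a point unless it equals that line, so $M(L_3)=L_3$, and then $MP_3\notin M(L_3)=L_3$ forces $MP_3=P_3$. A matrix fixing $P_3$ and stabilizing $\{Z=0\}$ has exactly the block shape defining $\operatorname{GL}_{2,Z}(\mathbb{C})$, so $N_{\varrho(G)}(\mathbb{C})\subseteq\operatorname{GL}_{2,Z}(\mathbb{C})$, and with the reverse inclusion above we get equality.

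I do not foresee a serious obstacle; the subtleties to watch are the $\operatorname{PGL}_3$-versus-$\operatorname{GL}_3$ scalar bookkeeping, so that ``$M\phi M^{-1}\in\langle\phi\rangle$'' is read correctly as a power of $\phi$ up to scalars and similarly when extracting the block forms, and invoking the collineation remarks about homologies and their axes and centers cleanly rather than re-deriving them each time. Once those are set up, the remainder is the elementary linear algebra of permutation matrices and of matrices respecting the flag $(P_3,L_3)$.
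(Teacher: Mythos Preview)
Your proof is correct and follows essentially the same approach as the paper: both arguments rest on Lemma~\ref{fixedpoints} to identify the fixed-point configuration $V$ (the three reference points when a non-homology is present, the line $L_3$ together with $P_3$ in the cyclic homology case) and then observe that the normalizer must preserve $V$, from which the block or monomial form of its elements is immediate. Your write-up is more explicit than the paper's---you spell out the easy containments, track the fixed locus under conjugation element by element, and identify $H$ concretely as $N_{\varrho(G)}(\mathbb{C})\cap\tilde{\operatorname{S}}_3$---but the underlying idea is identical.
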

\begin{proof}
Using Lemma \ref{fixedpoints} and the assumption that $\varrho(G)$ is diagonal, we deduce that there is always a unique set $V$, which is fixed pointwise by $\varrho(G)$. We get $V=\{P_1=(1:0:0),\,P_2=(0:1:0),\,P_3=(0:0:1)\}$ if a non-homology is present in $\varrho(G)$, and $V$ is formed by all points of the line $L_3:Z=0$ and the point $P_3$, otherwise. Therefore, $V$ is also fixed by $N_{\varrho(G)}(\mathbb{C})$, and the computations are straightforward.
\end{proof}


\section{Pseudoreal-plane Riemann surfaces with diagonal automorphism group, fixing infinitely many points in $\mathbb{P}^2_{\mathbb{C}}$}\label{thecycliccase}
Let $\mathcal{M}_g$ be the (coarse) moduli space of smooth curves of genus $g\geq3$ over $\mathbb{C}$.
\begin{defn}\label{pseudoreal-planefirsttype}
By $\widetilde{{(\mathcal{M}_{g}^{Pl})}}_{n,\operatorname{diag}}^{h}$ where $n\geq2$ is an integer, we mean the substratum of $\mathcal{M}_{g}$ of plane Riemann surfaces $\mathcal{S}$ of genus $g=(d-1)(d-2)/2\geq3$, such that $\operatorname{Aut}_+(\mathcal{S})$ is a finite diagonal subgroup of $\operatorname{PGL}_3(\mathbb{C})$ made entirely of homologies. We also can assume that $n$ is maximal, in the sense that any automorphism of $\mathcal{S}$ has order at most $n$.
\end{defn}
Given $\mathcal{S}\in\widetilde{{(\mathcal{M}_{g}^{Pl})}}_{n,\operatorname{diag}}^{h}$, we know by Proposition \ref{classific.} that $\operatorname{Aut}_+(\mathcal{S})$ is either cyclic or $\operatorname{PGL}_3(\mathbb{C})$-conjugate to $\langle\operatorname{diag}(1,-1,1),\operatorname{diag}(1,1,-1)\rangle$. Fix a non-singular plane model $F_{\mathcal{S}}(X,Y,Z)=0$ for $\mathcal{S}$ over $\mathbb{C}$ of degree $d$, such that $\operatorname{Aut}_+(F_{\mathcal{S}})\leq\operatorname{D}(\mathbb{C})$. Badr-Bars in \cite{BaBa2}, following the techniques of Dolgachev in \cite{Dol} for genus $3$ curves (see also \cite[\S 2.1]{Bars}), determined the possible cyclic groups, which can appear inside $\operatorname{Aut}_+(F_{\mathcal{S}})$, moreover the associated defining equation is also given in each situation. In particular,
one deduces that the  stratum $\widetilde{{(\mathcal{M}_{g}^{Pl})}}_{n,\operatorname{diag}}^{h}$ might be non-empty only if $n$ divides $d$ or $d-1$.

For $u=1,2$, consider the sets $$S(u)_n:=\{u\leq j\leq d-1:\,\,d-j\equiv0\,(\modu\,n)\}.$$
\begin{prop}(Badr-Bars \cite[Theorem 7]{BaBa2})\label{geomcomp}
The stratum $\widetilde{{(\mathcal{M}_{g}^{Pl})}}_{n,\operatorname{diag}}^{h}$ is non-empty only if $n$ divides $d$ or $d-1$. More concretely, if $n\,|\,d$, then $F_{\mathcal{S}}(X,Y,Z)=0$ has the form
\begin{eqnarray*}
\mathcal{C}_1:\,Z^d+\sum_{j\in S(1)_n}Z^{d-j}L_{j,Z}+L_{d,Z}=0,
\end{eqnarray*}
and if $n\,|\,d-1$, we get the form
\begin{eqnarray*}
\mathcal{C}_2:\,Z^{d-1}Y+\sum_{j\in S(2)_n}Z^{d-j}L_{j,Z}+L_{d,Z}=0.
\end{eqnarray*}
Here $L_{j,Z}$ is a generic homogenous polynomial of degree $j$ where the variable $Z$ does not appear.
\end{prop}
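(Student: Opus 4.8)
The plan is to follow the method of Dolgachev and Badr--Bars: normalise the homology, split the defining polynomial according to the power of $Z$ modulo $n$, and then use smoothness at the center of the homology to discard the degenerate shapes. First I would observe, using Proposition \ref{classific.}, that the group $\operatorname{Aut}_+(\mathcal{S})$ realised diagonally is either cyclic of order $n$ or conjugate to $\varrho_0(\Z/2\Z\times\Z/2\Z)$ (in which case $n=2$), and that in either case it contains an homology $\phi$ of order $n$ --- the generator in the cyclic case, any of the three coordinate homologies otherwise. Conjugating by a permutation matrix, which normalises $\operatorname{D}(\mathbb{C})$, and rescaling, I may assume $\phi=\operatorname{diag}(1,1,\zeta_n)$, so that its center is $P_3=(0:0:1)$ and its axis is $L_3:Z=0$ (Lemma \ref{fixedpoints}).

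Then I would write
\[
F_{\mathcal{S}}(X,Y,Z)=\sum_{k=0}^{d}Z^{k}\,G_{d-k}(X,Y),\qquad G_i\in\mathbb{C}[X,Y]\ \text{homogeneous of degree }i,
\]
and translate invariance of the curve under $\phi$ into $F_{\mathcal{S}}(X,Y,\zeta_n Z)=\lambda\,F_{\mathcal{S}}(X,Y,Z)$ for some scalar $\lambda$, so that $\zeta_n^{k}=\lambda$ for every $k$ with $G_{d-k}\neq0$; hence all such indices lie in one residue class modulo $n$. Since $\mathcal{S}$ is smooth of degree $d\geq4$ it is irreducible, so $Z\nmid F_{\mathcal{S}}$, i.e. $G_d\neq0$; therefore $k=0$ is admissible, the common residue class is $0$ modulo $n$, and $F_{\mathcal{S}}=\sum_{n\mid k}Z^{k}G_{d-k}(X,Y)$. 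Set $k^{*}:=\max\{k:G_{d-k}\neq0\}$, so $n\mid k^{*}$ and $k^{*}\leq d$.

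The crux of the argument is the case $k^{*}\leq d-2$: then every monomial of $F_{\mathcal{S}}$ has degree $\geq d-k^{*}\geq 2$ in $\{X,Y\}$, so $F_{\mathcal{S}}$ together with all its first partial derivatives vanishes at the center $P_3=(0:0:1)$, contradicting smoothness. Hence $k^{*}\in\{d-1,d\}$. This is where the actual work lies --- the rest is bookkeeping --- and it is also the step that pins down the dichotomy, since $k^{*}=d$ forces $n\mid d$ while $k^{*}=d-1$ forces $n\mid d-1$.

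Finally I would put the polynomial in normal form. If $k^{*}=d$ then $G_0$ is a nonzero constant, so $n\mid d$, and after rescaling to make $G_0=1$ the substitution $j=d-k$ (using $n\mid d-j\Leftrightarrow n\mid j$) puts $F_{\mathcal{S}}$ in the form $\mathcal{C}_1$ with index set $S(1)_n$. If $k^{*}=d-1$ then $n\mid d-1$ and $G_1=aX+bY\neq 0$; applying a linear substitution in $X,Y$, i.e. an element of $\operatorname{GL}_{2,Z}(\mathbb{C})=N_{\langle\phi\rangle}(\mathbb{C})$ (Proposition \ref{normalizerdetyermination}(ii)), I may take $G_1=Y$, and then $j=d-k$ puts $F_{\mathcal{S}}$ in the form $\mathcal{C}_2$ with index set $S(2)_n$; in particular $\widetilde{{(\mathcal{M}_{g}^{Pl})}}_{n,\operatorname{diag}}^{h}$ is empty unless $n\mid d$ or $n\mid d-1$. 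The points that will need care are the bookkeeping identifying the surviving exponent sets with $S(1)_n$ and $S(2)_n$, and, in the case $k^{*}=d-1$, checking that the normalising substitution can be chosen so as to keep $\operatorname{Aut}_+(F_{\mathcal{S}})$ diagonal --- for $\varrho_0(\Z/2\Z\times\Z/2\Z)$ one has $n=2$, which divides one of $d,d-1$, and there $G_1$ can already be normalised by a coordinate permutation alone.
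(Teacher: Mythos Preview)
The paper does not supply its own proof of this proposition: it is quoted verbatim from Badr--Bars \cite[Theorem~7]{BaBa2}, and the surrounding text only remarks that the result is obtained ``following the techniques of Dolgachev in \cite{Dol}''. So there is nothing in the present paper to compare your argument against line by line.

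That said, your reconstruction is correct and is exactly the Dolgachev/Badr--Bars method the paper alludes to. The key steps --- normalising the homology to $\operatorname{diag}(1,1,\zeta_n)$, using $\phi$-semi-invariance to force all surviving $Z$-exponents into a single residue class modulo $n$, using irreducibility ($Z\nmid F_{\mathcal{S}}$) to pin that class to $0$, and then invoking smoothness at the center $P_3=(0:0:1)$ to rule out $k^{*}\leq d-2$ --- are precisely the ingredients of the cited argument. Your bookkeeping identifying the surviving exponents with $S(1)_n$ and $S(2)_n$ is accurate, and your care about whether the normalising changes of variables keep $\operatorname{Aut}_+(F_{\mathcal{S}})$ diagonal (via Proposition~\ref{normalizerdetyermination}(ii) in the cyclic case, and via a coordinate permutation for $\varrho_0(\Z/2\Z\times\Z/2\Z)$) is a point that the paper itself leaves implicit. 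In short: your proof is a faithful and self-contained version of the argument the paper imports by citation.
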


\begin{rem}\label{distinctfactors}
By non-singularity, the binary form $L_{d,Z}(X,Y)$ in Proposition \ref{geomcomp} can not have any repeated linear factors for any specializations of the parameters in $\mathbb{C}$. Otherwise, we may assume (up to $\mathbb{C}$-isomorphism via a change of the variable $X$) that the repeated factor is the line $X=0$. Therefore, $L_{d,Z}(X,Y)$ reduces to $X^2L_{d-2,Z}(X,Y)$. However, $d-1\notin S(u)_n$ for $u=1,2$, since $n>1$. Then, $F_{\mathcal{S}}(X,Y,Z)=0$ has the form $Z^2G(X,Y,Z)+X^2L_{d-2,Z}(X,Y)=0$, which is singular at the reference point $(0:1:0)$.
\end{rem}

\subsection{Curves of $\widetilde{{(\mathcal{M}_{g}^{Pl})}}_{n,\operatorname{diag}}^{h}$ having odd signature}
\begin{defn}[signature]
Let $\phi: C\rightarrow C/G$ be a branched Galois covering
between smooth curves, and let $y_1,...,y_r$ be its branch points. The
signature of $\phi$ is defined as $(g_0; m_1,..., m_r)$, where $g_0$ is the genus of $C/G$ and
$m_i$ is the ramification index of any point in $\phi^{-1}(y_i)$.
\end{defn}
R. Hidalgo \cite{Hidalgo} considered complex curves $C$ such that the natural cover $\pi_C:\,C\rightarrow\,C/\operatorname{Aut}(C)$ has signature of the form $(0;m_1,m_2,m_3,m_4)$, proving that $C$ can be defied over its field of moduli if $m_4\notin\{m_1,m_2,m_3\}$. Artebani-Quispe in \cite{SAUL1} extended such a result to smooth curves of odd signature:

\begin{defn}[odd Signature]\label{defnoddsign}
A smooth curve $C$ of genus $g\geq2$ has odd signature if
the signature of the natural covering $\pi_C: C\rightarrow C/\operatorname{Aut}(C)$ is of the form $(0;m_1,...,m_r)$
where some $m_i$ appears exactly an odd number of times.
\end{defn}
\begin{thm}[Artebani-Quispe, Theorem 2.5, \cite{SAUL1}]\label{oddsign1}
Let $C$ be a smooth curve of genus $g\geq2$ defined over an algebraically closed field $F$. Let $L$ be a subfield of $F$, such that $F/L$ is Galois. If $C$ is an odd signature curve, then the field of moduli $M_{F/L}(C)$ is a field of definition for $C$.
\end{thm}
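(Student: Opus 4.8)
The plan is to run Weil's descent criterion (Theorem \ref{weilcocylcle}) over the extension $F/M$, where $M := M_{F/L}(C)$, and to use the odd-signature hypothesis to eliminate the single cohomological obstruction that survives. First I would set $U := U_{F/L}(C)$; by the definition of the field of moduli one has $U = \Gal(F/M)$, so that $F/M$ is Galois and ${}^{\sigma}C \cong_F C$ for every $\sigma \in U$. Since $g \geq 2$ the group $G := \Aut(C)$ is finite, so for each $\sigma$ the set of $F$-isomorphisms $f_{\sigma} : {}^{\sigma}C \to C$ is a torsor under $G$. By Theorem \ref{weilcocylcle}, descending $C$ to $M$ is equivalent to choosing the $f_{\sigma}$ so that $f_{\sigma}\circ{}^{\sigma}f_{\tau} = f_{\sigma\tau}$ for all $\sigma,\tau \in U$. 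Any initial choice yields a discrepancy $c(\sigma,\tau) := f_{\sigma}\circ{}^{\sigma}f_{\tau}\circ f_{\sigma\tau}^{-1} \in G$, and the failure to solve this relation is encoded by the group extension
\[
1 \longrightarrow G \longrightarrow W \longrightarrow U \longrightarrow 1, \qquad W := \{(f,\sigma) : \sigma\in U,\ f:{}^{\sigma}C\xrightarrow{\ \sim\ }C\},
\]
which splits exactly when a compatible system exists.

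Next I would descend the canonical quotient. The covering $\pi : C \to B := C/G$ has genus-zero base, since the signature is $(0;m_1,\dots,m_r)$, and $(B,D)$ — with $D = \sum_i y_i$ the branch divisor weighted by the $m_i$ — is intrinsic to $C$. Hence $U$ fixes its $F$-isomorphism class, so $(B,D)$ has field of moduli contained in $M$; the theory of fields of moduli for genus-zero curves then furnishes an $M$-model $B_0$ of $B$, a conic a priori, on which the $\Gal(F/M)$-invariant divisor $D$ descends to an effective divisor $D_0$ defined over $M$.

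Here the odd signature intervenes. Choose an index $m$ occurring an odd number $k$ of times among the $m_i$; the corresponding part $D_m \subseteq D_0$ is $\Gal(F/M)$-invariant, hence an $M$-rational effective divisor on $B_0$ of odd degree $k$, and therefore contains a closed point of odd degree. The class $[B_0]\in\operatorname{Br}(M)$ of a conic is $2$-torsion, while a corestriction--restriction argument against that odd-degree point gives $k\cdot[B_0]=0$; since $\gcd(k,2)=1$ this forces $[B_0]=0$, i.e. $B_0 \cong \mathbb{P}^1_M$. Consequently $B_0(M)$ is large, and there is an $M$-rational point $b_0$ lying off the finite branch locus (the case of a finite residue field $M$ being handled directly).

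The final step is to convert $b_0$ into a splitting of the extension above. The fibre $\pi^{-1}(b_0)\subset C$ is an unramified, $\Gal(F/M)$-stable $G$-torsor, and rigidifying by it supplies the reduction needed to trivialise the gerbe measuring the obstruction $c$; this is precisely the descent-from-a-rational-base-point mechanism of D\`ebes--Emsalem, which then produces $f_{\sigma}$ satisfying Weil's cocycle relation and hence a model of $C$ over $M$. I expect this last step to be the main obstacle: pinning down the canonical $M$-model $B_0$ of the quotient, and carrying out the rigidification over a fibre that need not itself carry an $M$-point, so that the resulting isomorphisms genuinely verify $f_{\sigma}\circ{}^{\sigma}f_{\tau}=f_{\sigma\tau}$, are the delicate points. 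They are exactly where an $M$-rational point off the branch locus is indispensable, and that point is what the odd-degree divisor $D_m$ — and thus the odd-signature hypothesis — guarantees.
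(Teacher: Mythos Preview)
The paper does not supply its own proof of this theorem: it is quoted verbatim from Artebani--Quispe \cite{SAUL1} and used as a black box (see the lines immediately following the statement, where the next item is Corollary~\ref{oddsign2}). So there is no proof in the paper to compare against.

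That said, your sketch is essentially the argument of \cite{SAUL1}: pass to the canonical $M$-model $B_0$ of $C/\Aut(C)$ furnished by D\`ebes--Emsalem, observe that the odd-multiplicity branch packet gives a Galois-stable effective divisor of odd degree on the conic $B_0$ and hence forces $B_0\cong\mathbb{P}^1_M$, pick an $M$-rational point off the branch locus, and invoke the D\`ebes--Emsalem descent mechanism. Two places deserve tightening. First, your appeal to Theorem~\ref{weilcocylcle} is formal at best, since $F/M$ is not a finite (or even finitely generated) extension; the actual work is done not by checking Weil's cocycle relation by hand but by the D\`ebes--Emsalem theorem you cite at the end, so the opening paragraph about the extension $W$ and the $2$-cocycle $c$ is motivational rather than logically load-bearing. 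Second, the existence of the canonical $M$-model $B_0$ is itself a nontrivial input from D\`ebes--Emsalem, not just ``the theory of fields of moduli for genus-zero curves''; you should flag that dependence explicitly, since without it you only know $B$ descends to some conic over \emph{some} field of definition, not over $M$ itself.
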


\begin{cor}\label{oddsign2}
Let $\mathcal{S}$ be a plane Riemann surface in $\widetilde{{(\mathcal{M}_{g}^{Pl})}}_{n,\operatorname{diag}}^{h}$ with $\operatorname{Aut}_+(\mathcal{S})$ cyclic of order $n$. Then, $\mathcal{S}$ has odd signature if and only if $n$ is odd and equals to either $d$ or $d-1$. Moreover, in this case that $\mathcal{S}$ is of odd signature, $F_{\mathcal{S}}(X,Y,Z)=0$ is defined by an equation of the form $Z^d+L_{d,Z}=0$ if $n=d$, and $Z^{d-1}Y+L_{d,Z}=0$ if $n=d-1$.
\end{cor}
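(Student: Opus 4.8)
The plan is to analyze the signature of the natural covering $\pi_{\mathcal{S}}:\mathcal{S}\to\mathcal{S}/\operatorname{Aut}_+(\mathcal{S})$ directly from the explicit models in Proposition \ref{geomcomp}, using Riemann--Hurwitz together with the orbit structure of a cyclic diagonal group acting on a plane curve. First I would fix the hypothesis: $\operatorname{Aut}_+(\mathcal{S})=\langle\phi\rangle$ is cyclic of order $n\geq 2$, made entirely of homologies, so (after conjugation) $\phi=\operatorname{diag}(1,1,\zeta_n)$ with axis $L_3:Z=0$ and center $P_3=(0:0:1)$. By Proposition \ref{geomcomp} we are in one of the two cases $n\mid d$ (model $\mathcal{C}_1$) or $n\mid d-1$ (model $\mathcal{C}_2$), and by Definition \ref{pseudoreal-planefirsttype} $n$ is maximal. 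Since the quotient $\mathcal{S}/\langle\phi\rangle$ must have genus $0$ for odd signature to even be possible, I would first compute that genus via Riemann--Hurwitz and identify exactly which points of $\mathcal{S}$ are fixed by powers of $\phi$: by Lemma \ref{fixedpoints}(i), the fixed locus of any nontrivial power of $\phi$ is the center $P_3$ together with the axis $L_3$, so the ramification of $\pi_{\mathcal{S}}$ occurs precisely at $P_3$ (if it lies on $\mathcal{S}$) and at the points $\mathcal{S}\cap L_3$.

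Next I would count these intersection points. For model $\mathcal{C}_1$ (case $n\mid d$), setting $Z=0$ gives $L_{d,Z}(X,Y)=0$, which by Remark \ref{distinctfactors} has $d$ distinct roots, so $\mathcal{S}\cap L_3$ consists of $d$ points, each a branch point of order $n$ since $\phi$ acts on a transverse coordinate as multiplication by $\zeta_n$; meanwhile $P_3=(0:0:1)$ lies on $\mathcal{S}$ iff the pure $Z^d$ term is absent, i.e. iff the coefficient of $Z^d$ vanishes — and one checks $P_3$, when on the curve, contributes a further branch point of order $n$ (the tangent situation at the center). For model $\mathcal{C}_2$ (case $n\mid d-1$), $P_3$ is never on the curve (the $Z^{d-1}Y$ term forces $Y\neq 0$ there is not quite it — rather $Z^{d-1}Y$ means $(0:0:1)$ gives $0$, so actually $P_3$ \emph{is} on $\mathcal{C}_2$; I would redo this carefully), and $\mathcal{S}\cap L_3$ again gives $d$ points from $L_{d,Z}=0$. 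Assembling this into Riemann--Hurwitz, $2g-2 = n(2g_0-2) + (\text{number of branch points})\cdot(n-1)$ with $g=(d-1)(d-2)/2$, I would solve for $g_0$ in each case and find that $g_0=0$ forces $n$ to be $d$ or $d-1$ (the divisibility conditions already restrict $n$ to divisors of $d$ or $d-1$, and the genus-zero constraint pins down $n$ to the top value), and that the signature is then $(0; n, n, \dots, n)$ with the number of entries being $d$ or $d+1$ depending on whether the center contributes.

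The final step is the odd/even parity bookkeeping: the signature is $(0;\underbrace{n,\dots,n}_{r})$, all ramification indices equal, so by Definition \ref{defnoddsign} it is odd signature iff $r$ is odd. I would compute $r$ exactly in each of the two cases (for $\mathcal{C}_1$: $r=d$ if the $Z^d$-coefficient is nonzero, $r=d+1$ if it vanishes — but if it vanishes then the core/defining structure and Proposition \ref{geomcomp} force the reduced form $Z^d+L_{d,Z}=0$ to be the odd-signature representative; for $\mathcal{C}_2$: analogous with $Z^{d-1}Y+L_{d,Z}=0$), and show that the parity of $r$ matches the parity of $n$ precisely when the intermediate terms $\sum_{j\in S(u)_n}Z^{d-j}L_{j,Z}$ are absent, which is what yields the two stated normal forms. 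The statement that $n$ must be odd then drops out: if $n$ were even, $\phi^{n/2}$ would be an anticonformal... no — rather, I would argue that when $n$ is even the signature $(0;n,\dots,n)$ with $r=d$ or $d+1$ entries fails the odd-signature test unless one can show $d$ itself has the needed parity, and reconcile this with $n\mid d$ (resp. $n\mid d-1$) to conclude $n$ odd. The main obstacle I anticipate is the careful case analysis at the center $P_3$: deciding exactly when $P_3\in\mathcal{S}$, what its ramification index is, and how the presence or absence of the $Z^d$ term (resp. $Z^{d-1}Y$ term) interacts with the defining normal forms — this is where an off-by-one in $r$ would flip the parity conclusion, so I would verify it against the low-degree examples in \cite{BaBa2, Dol} before trusting the general count.
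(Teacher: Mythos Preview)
Your overall strategy is the same as the paper's: locate the ramification of $\pi_{\mathcal{S}}$ via the fixed locus of the homology $\phi=\operatorname{diag}(1,1,\zeta_n)$, count branch points, use the parity of that count for the odd-signature condition, and then force $n\in\{d,d-1\}$ with Riemann--Hurwitz at $g_0=0$. However, two concrete steps in your execution are off and would derail the argument.

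First, the analysis at the center $P_3=(0:0:1)$ is not conditional in the way you suggest. In the normal form $\mathcal{C}_1$ the term $Z^d$ is always present (coefficient $1$), so $P_3\notin\mathcal{S}$ and the branch count is exactly $r=d$ in every case $n\mid d$; there is no ``$r=d+1$ if the $Z^d$-coefficient vanishes'' branch. Conversely, in $\mathcal{C}_2$ the leading term is $Z^{d-1}Y$, which vanishes at $P_3$, and so do all the lower terms; hence $P_3\in\mathcal{S}$ always, contributing one extra branch point, and $r=d+1$ in every case $n\mid d-1$. This is exactly what the paper records, and it resolves the uncertainty you flagged.

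Second, your sentence ``the parity of $r$ matches the parity of $n$ precisely when the intermediate terms $\sum_{j\in S(u)_n}Z^{d-j}L_{j,Z}$ are absent'' inverts the logic. The value of $r$ depends only on which model ($\mathcal{C}_1$ or $\mathcal{C}_2$) you are in, not on whether the intermediate terms are present. The correct chain is: odd signature $\Rightarrow$ $r$ odd $\Rightarrow$ ($d$ odd if $n\mid d$, or $d$ even if $n\mid d-1$); then Riemann--Hurwitz with $g_0=0$ forces $n=d$ in the first case and $n=d-1$ in the second, so $n$ is odd in both. The vanishing of the intermediate sums is then a \emph{consequence}: $n=d$ makes $S(1)_d=\emptyset$ and $n=d-1$ makes $S(2)_{d-1}=\emptyset$, which immediately yields the two stated normal forms. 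Once you straighten out these two points, your proof coincides with the paper's.
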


\begin{proof}
By Remark \ref{distinctfactors}, we know that the binary form $L_{d,Z}$ factors into $d$ distinct factors associated to $d$ distinct roots, say $(a_i:b_i)\in\mathbb{P}^1_{\mathbb{C}}$, for $i=1,2,...,d$. Since $\operatorname{Aut}_+(F_{\mathcal{S}})=\langle\operatorname{diag}(1,1,\zeta_n)\rangle$, the covering $\pi_{\mathcal{S}}:\mathcal{S}\mapsto \mathcal{S}/\operatorname{Aut}_+(\mathcal{S})$ is ramified exactly at the $d$ points $\{(a_i:b_i:0)\}$ when $n|d$, plus the extra point $(0:0:1)$ when $n|d-1$. This gives $d$ branch points (resp. $d+1$) with ramification index $n$ when $n|d$ (resp. $n|d-1$). Consequently, $F_{\mathcal{S}}(X,Y,Z)=0$ has odd signature only if $n\,|\,d$ and $d$ odd or $n\,|\,d-1$ and $d$ even. The Riemann-Hurwitz formula reads as
$$(d-1)(d-2)-2=n(2g_0-2+d(1-1/n)),$$
when $n|d$ and $d$ odd, and as
$$(d-1)(d-2)-2=n(2g_0-2+(d+1)(1-1/n)),$$
when $n|d-1$ and $d$ even.  Setting $g_0=0$ and solving for $n$, we obtain $n=d$ (resp. $n=d-1$). This proves the "if and only if" statement.

Lastly, for $n=d$ (resp. $n=d-1$), the index set $S(1)_d$ (resp. $S(2)_{d-1}$) is obviously empty, and we deduce the defining equation for $\mathcal{S}$.
  \end{proof}

\subsection{The stratum $\widetilde{{(\mathcal{M}_{g}^{Pl})}}_{n,\operatorname{diag}}^{h}$ with $n|\,d-1$}
Suppose that $\widetilde{{(\mathcal{M}_{g}^{Pl})}}_{n,\operatorname{diag}}^{h}$ is non-empty for some fixed integer $n>1$ with $n|d-1$. Let $\mathcal{S}\in{(\mathcal{M}_{g}^{Pl})}_{n,\operatorname{diag}}^{h}$ with cyclic automorphism group. Fix a non-singular plane model $F_{\mathcal{S}}(X,Y,Z)=0$ of degree $d$ over $\mathbb{C}$ in the family $\mathcal{C}_2$ of Proposition \ref{geomcomp}, such that $\operatorname{Aut}_+(F_{\mathcal{S}})=\langle\operatorname{diag}(1,1,\zeta_n)\rangle.$

Let us consider the subfamilies
\begin{eqnarray*}
\mathcal{C}^{(1)}_{2}&:&\,Z^{d-1}Y+\sum_{j\in S(2)_n}Z^{d-j}L_{j,Z}+X^d+X^{d-2}Y^2+\sum_{j=3}^{d}\,a_jX^{d-j}Y^{j}=0,\\
\mathcal{C}^{(3)}_{2}&:&\,Z^{d-1}Y+\sum_{j\in S(2)_n}Z^{d-j}L_{j,Z}+X^d+Y^d+\sum_{j=3}^{d-1}\,a_jX^{d-j}Y^{j}=0,\\
\mathcal{C}^{(4)}_{2}&:&\,Z^{d-1}Y+\sum_{j\in S(2)_n}Z^{d-j}L_{j,Z}+X^d+XY^{d-1}+\sum_{j=3}^{d-2}\,a_jX^{d-j}Y^{j}=0,\\
\mathcal{C}^{(5)}_{2}&:&\,Z^{d-1}Y+\sum_{j\in S(2)_n}Z^{d-j}L_{j,Z}+X^{d-1}Y+\sum_{j=3}^{d}\,a_jX^{d-j}Y^{j}=0.
\end{eqnarray*}
\begin{lem}\label{geometcomp2}
We have $$\mathcal{C}_2=\bigcup_{s=1}^{4}\mathcal{C}^{(s)}_{2}.$$
\end{lem}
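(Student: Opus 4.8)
The plan is to show the two inclusions $\mathcal{C}_2 \subseteq \bigcup_{s=1}^{4}\mathcal{C}^{(s)}_2$ and $\bigcup_{s=1}^{4}\mathcal{C}^{(s)}_2 \subseteq \mathcal{C}_2$. The reverse inclusion is essentially immediate: each $\mathcal{C}^{(s)}_2$ is obtained from the generic member of $\mathcal{C}_2$ by specializing the binary form $L_{d,Z}(X,Y)$ (the part of the equation of degree $d$ in which $Z$ does not appear) to a particular shape, while the $Z$-divisible part $Z^{d-1}Y + \sum_{j\in S(2)_n}Z^{d-j}L_{j,Z}$ is left untouched; one only has to check that these specializations still define smooth plane curves, which follows from Remark \ref{distinctfactors} (the binary part must be squarefree) once one exhibits at least one smooth member in each subfamily. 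So the real content is the forward inclusion.

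For the forward inclusion, fix $\mathcal{S}\in\mathcal{C}_2$ with a model $F_{\mathcal{S}}=Z^{d-1}Y+\sum_{j\in S(2)_n}Z^{d-j}L_{j,Z}+L_{d,Z}=0$, where by Remark \ref{distinctfactors} the binary form $L_{d,Z}(X,Y)$ has $d$ distinct linear factors, hence $d$ distinct roots $(a_i:b_i)\in\mathbb{P}^1_{\mathbb{C}}$. The key observation is that isomorphisms preserving the diagonal structure are given by the normalizer computed in Proposition \ref{normalizerdetyermination}(ii), i.e. by elements of $\operatorname{GL}_{2,Z}(\mathbb{C})$; these act on the $(X:Y)$-coordinates by an arbitrary element of $\operatorname{PGL}_2(\mathbb{C})$ (together with a possible rescaling of $Z$, which does not affect $L_{d,Z}$). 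Now $\operatorname{PGL}_2(\mathbb{C})$ acts on the set of $d$ distinct roots of $L_{d,Z}$, and the classification into four cases is simply a case analysis on the position of the point $(1:0)$ — the $X$-axis direction, cut out by $Y=0$ — relative to this root set, together with the use of $\operatorname{PGL}_2$ to normalize:
\begin{enumerate}[(i)]
\item $(1:0)$ is not a root, so after scaling the coefficient of $X^d$ is $1$ and we may further move a second distinguished root to $(0:1)$; using one more parameter of $\operatorname{PGL}_2$ one normalizes the coefficient of $X^{d-2}Y^2$ to $1$, giving $\mathcal{C}^{(1)}_2$;
\item $(1:0)$ is a simple root (so $X$ divides $L_{d,Z}$ exactly once), and after using $\operatorname{PGL}_2$ to place the remaining special roots and normalize, the $X^d$ term vanishes while $XY^{d-1}$ and $X^{d-1}Y$ survive — this splits into $\mathcal{C}^{(3)}_2$, $\mathcal{C}^{(4)}_2$, $\mathcal{C}^{(5)}_2$ according to whether $(0:1)$ is a non-root, a simple root with the chosen normalization, etc.
\end{enumerate}
The bookkeeping of which monomials can be removed and which can be normalized to $1$ is exactly the Dolgachev-type reduction already invoked before Proposition \ref{geomcomp}, applied now to the binary form $L_{d,Z}$ of degree $d$ in two variables; since $\dim\operatorname{PGL}_2=3$, after fixing the degree-$d$ coefficient and two root locations one has used all the freedom, and the residual normalization fixes one further coefficient to $1$.

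I expect the main obstacle to be verifying that the four listed normal forms are genuinely exhaustive — i.e. that no configuration of the $d$ roots of $L_{d,Z}$ under the $\operatorname{PGL}_2(\mathbb{C})$-action fails to land in one of $\mathcal{C}^{(1)}_2,\mathcal{C}^{(3)}_2,\mathcal{C}^{(4)}_2,\mathcal{C}^{(5)}_2$, and dually that the normalizations described are actually attainable (for instance, that in case (i) the coefficient of $X^{d-2}Y^2$ is nonzero after moving a root to $(0:1)$, or can be arranged to be, possibly by an alternative choice of which roots to send to $0$ and $\infty$). This is a finite but slightly delicate orbit analysis; one handles it by observing that for $d\geq 4$ the root multiset of a squarefree binary $d$-form always contains at least $d\geq 4$ distinct points, so one can always choose a root to send to $(0:1)$, and the only genuine dichotomy is whether $(1:0)$ is itself a root. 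The labelling gap in the statement (the families are indexed $1,3,4,5$ but the union runs $s=1$ to $4$) should be read as the union over the four displayed families; I would simply write $\mathcal{C}_2 = \mathcal{C}^{(1)}_2 \cup \mathcal{C}^{(3)}_2 \cup \mathcal{C}^{(4)}_2 \cup \mathcal{C}^{(5)}_2$ to avoid ambiguity.
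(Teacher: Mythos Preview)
Your overall strategy (use the normalizer to normalize $L_{d,Z}$) matches the paper's, but there is a genuine error in the execution. You assert that the normalizer $\operatorname{GL}_{2,Z}(\mathbb{C})$ acts on $(X:Y)$ by an \emph{arbitrary} element of $\operatorname{PGL}_2(\mathbb{C})$. That is true of the normalizer abstractly, but it is not true of the subgroup preserving the $\mathcal{C}_2$ normal form: the monomial $Z^{d-1}Y$ forces any $\mathcal{C}_2$-to-$\mathcal{C}_2$ isomorphism to have the shape $[\alpha X+\beta Y:\gamma Y:Z]$ (this is exactly the reduction used later in the proof of Theorem \ref{6.5}). So only the Borel subgroup of $\operatorname{PGL}_2$ is available, and in particular the point $(1:0)\in\mathbb{P}^1$ is fixed; you cannot ``move roots of $L_{d,Z}$ to $(1:0)$'' as your plan proposes.

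This error propagates to your case split, which is inverted relative to the actual families: $\mathcal{C}^{(1)}_2$, $\mathcal{C}^{(3)}_2$, $\mathcal{C}^{(4)}_2$ all contain the monomial $X^d$ (so $(1:0)$ is \emph{not} a root of $L_{d,Z}$), while only $\mathcal{C}^{(5)}_2$ has $X^d$ absent. The paper's argument is the correct Borel-orbit analysis done coefficientwise: non-singularity at $(1:0:0)$ forces $X^d$ or $X^{d-1}Y$ to appear in $L_{d,Z}$; rescale $X$ to make its coefficient $1$; then use a single shear $X\mapsto X+cY$ to kill the next coefficient ($a_1$ in the $X^d$ case, $a_2$ in the $X^{d-1}Y$ case); finally, in the $X^d$ case, split on whether $a_2=0$ and, if so, on which of $Y^d$ or $XY^{d-1}$ survives (one must, again by non-singularity). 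Rewriting your argument with the Borel restriction in mind recovers exactly this.
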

\begin{proof}
By non-singularity, $X^d$ or $X^{d-1}Y$ should appear in $L_{d,Z}$. Hence, up to rescaling the variable $X$ and then renaming the parameters, we can split up $\mathcal{C}_2$, in Proposition \ref{geomcomp}, into two substrata defined by the equations
$$
Z^{d-1}Y+\sum_{j\in S(2)_n}Z^{d-j}L_{j,Z}+X^{d-1}Y+a_2X^{d-2}Y^2+...+a_{d-1}XY^{d-1}+a_dY^d=0
$$
along with,
$$
Z^{d-1}Y+\sum_{j\in S(2)_n}Z^{d-j}L_{j,Z}+X^d+a_1X^{d-1}Y+...+a_{d-1}XY^{d-1}+a_dY^d=0.
$$
We always can assume $a_2=0$ in the first component, by a change of variables of the shape $X\mapsto X-\frac{a_2}{d-1}Y$ and then renaming the parameters in order to obtain the fourth component $\mathcal{C}^{(4)}_2$. In the same way, we may set $a_1=0$ in the second component through $X\mapsto X-\frac{a_1}{d}Y$ and renaming the parameters next. Moreover, if $a_2=0$, then we split it up according to whether the monomial $Y^d$ appears or not (if it does not appear, then $XY^{d-1}$ does, by non-singularity). Therefore, we get $\mathcal{C}^{(2)}_{2}$ and $\mathcal{C}^{(3)}_{2}$ respectively, up to rescaling $Y$ and $Z$. Finally, when $a_2\neq0$, we rescale $Y$ and $Z$ to have $\mathcal{C}^{(1)}_{2}$.
\end{proof}
\begin{rem}\label{importantrema}
A priori, the index set $S(2)_n$ is empty if and only if $\operatorname{diag}(1,1,\zeta_{d-1})\in\operatorname{Aut}_+(\mathcal{C}_2)$. In this case, $n=d-1$ and  the subfamily $\mathcal{C}^{(4)}_2$ is not irreducible anymore, since it factors as $Y\cdot\,G(X,Y,Z)=0$. Hence, $\mathcal{C}_2$ reduces to $\bigcup_{s=1}^{3}\mathcal{C}^{(s)}_{2}$ if $n=d-1$.
\end{rem}

\begin{thm}\label{6.5}
Let $\mathcal{S}\in\widetilde{{(\mathcal{M}_{g}^{Pl})}}_{n,\operatorname{diag}}^{h}$ with $n\,|\,d-1$ and $\operatorname{Aut}_+(\mathcal{S})\simeq\Z/n\Z$. If $\mathbb{R}$ is the field of moduli for $C$,
relative to the Galois extension $\mathbb{C}/\mathbb{R}$, then it is also a field of definition.
\end{thm}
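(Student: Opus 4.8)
The plan is to apply Weil's criterion of descent (Theorem~\ref{weilcocylcle}) to the finite Galois extension $\mathbb{C}/\mathbb{R}$, whose Galois group is $\{1,\sigma\}$ with $\sigma$ complex conjugation. Fix the non-singular plane model $F_{\mathcal{S}}(X,Y,Z)=0$ of degree $d$ in the family $\mathcal{C}_2$ of Proposition~\ref{geomcomp}, normalized so that $\operatorname{Aut}_+(F_{\mathcal{S}})=\langle\operatorname{diag}(1,1,\zeta_n)\rangle$. Since $\mathbb{R}$ is the field of moduli of $\mathcal{S}$ relative to $\mathbb{C}/\mathbb{R}$, there is a $\operatorname{PGL}_3(\mathbb{C})$-isomorphism $f_\sigma\colon{}^{\sigma}\mathcal{S}\to\mathcal{S}$, where ${}^{\sigma}\mathcal{S}$ is the curve $\overline{F_{\mathcal{S}}}=0$; pick a lift $M_\sigma\in\operatorname{GL}_3(\mathbb{C})$ with $F_{\mathcal{S}}(M_\sigma\mathbf{x})=\mu\,\overline{F_{\mathcal{S}}}(\mathbf{x})$ for some $\mu\in\mathbb{C}^{\ast}$. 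Setting $f_1=\operatorname{id}$, the only non-trivial cocycle relation to verify is $f_\sigma\circ{}^{\sigma}f_\sigma=f_{\sigma^{2}}=f_1=\operatorname{id}$ in $\operatorname{PGL}_3(\mathbb{C})$, equivalently that $N:=M_\sigma\overline{M_\sigma}$ is a scalar matrix; granting this, Theorem~\ref{weilcocylcle} produces a model of $\mathcal{S}$ over $\mathbb{R}$, which is exactly the claim.

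First I would constrain $M_\sigma$. Conjugation of polynomials carries automorphisms to automorphisms, so $\operatorname{Aut}_+({}^{\sigma}\mathcal{S})=\overline{\langle\operatorname{diag}(1,1,\zeta_n)\rangle}=\langle\operatorname{diag}(1,1,\zeta_n^{-1})\rangle=\langle\operatorname{diag}(1,1,\zeta_n)\rangle=\operatorname{Aut}_+(\mathcal{S})$. An isomorphism conjugates one automorphism group onto the other; since these coincide, $M_\sigma$ normalizes $\langle\operatorname{diag}(1,1,\zeta_n)\rangle$, and by Proposition~\ref{normalizerdetyermination}(ii) it lies in $\operatorname{GL}_{2,Z}(\mathbb{C})$, i.e.\ it has block form $\operatorname{diag}(A,1)$ with $A\in\operatorname{GL}_2(\mathbb{C})$ acting on $X,Y$ and fixing $Z$. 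Next I exploit that $F_{\mathcal{S}}$ lies in $\mathcal{C}_2$: every index in $S(2)_n$ is at least $2$, so the only monomial of $F_{\mathcal{S}}$ of $Z$-degree $d-1$ is $Z^{d-1}Y$. As $M_\sigma$ fixes $Z$ and maps $Y$ to some linear form $\ell(X,Y)$, the part of $F_{\mathcal{S}}(M_\sigma\mathbf{x})$ of $Z$-degree $d-1$ is $Z^{d-1}\ell$, and matching it with the $Z$-degree-$(d-1)$ part $\mu Z^{d-1}Y$ of $\mu\,\overline{F_{\mathcal{S}}}(\mathbf{x})$ forces $\ell=\mu Y$. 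Hence $A$ is upper triangular, with diagonal entries (in the $X$- and $Y$-slots) equal to some $a\in\mathbb{C}^{\ast}$ and to $\mu$, and upper-right entry some $b\in\mathbb{C}$.

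It remains to show that $N$ is scalar. Substituting $\mathbf{x}\mapsto\overline{M_\sigma}\mathbf{x}$ in $F_{\mathcal{S}}(M_\sigma\mathbf{x})=\mu\,\overline{F_{\mathcal{S}}}(\mathbf{x})$ and using its conjugate $\overline{F_{\mathcal{S}}}(\overline{M_\sigma}\mathbf{x})=\bar{\mu}\,F_{\mathcal{S}}(\mathbf{x})$ gives $F_{\mathcal{S}}(N\mathbf{x})=|\mu|^{2}F_{\mathcal{S}}(\mathbf{x})$, so $N$ represents an element of $\operatorname{Aut}_+(\mathcal{S})$; thus $N=\rho\operatorname{diag}(1,1,\zeta_n^{k})$ for some $\rho\in\mathbb{C}^{\ast}$ and $k\in\mathbb{Z}$. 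But $N$ has block form $\operatorname{diag}(A\overline{A},1)$: comparing the $(3,3)$-entries gives $\rho\zeta_n^{k}=1$, whence $A\overline{A}=\rho\,I_{2}=\zeta_n^{-k}I_{2}$, and comparing the $(1,1)$-entries gives $\zeta_n^{-k}=a\bar{a}=|a|^{2}>0$. A positive real that is a root of unity must be $1$, so $A\overline{A}=I_{2}$ and $N$ is the identity of $\operatorname{PGL}_3(\mathbb{C})$. Hence $f_\sigma\circ{}^{\sigma}f_\sigma=\operatorname{id}$, Weil's cocycle condition holds, and $\mathcal{S}$ is definable over $\mathbb{R}$. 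The main work is the careful bookkeeping between $\operatorname{GL}_3(\mathbb{C})$-representatives and their $\operatorname{PGL}_3(\mathbb{C})$-classes (tracking the scalar $\mu$) and the comparison of $Z^{d-1}$-parts; the point that makes the argument succeed --- and which, tellingly, has no analogue for the companion family $\mathcal{C}_1$ with $n\mid d$, where only $Z^{d}\mapsto Z^{d}$ is available --- is that the monomial $Z^{d-1}Y$ forces $A$ upper triangular, so the factor $\zeta_n^{-k}$, a priori merely a root of unity (the would-be obstruction class in $\operatorname{Br}(\mathbb{R})\cong\mathbb{Z}/2\mathbb{Z}$), is pinned to $1$ for free by the positivity of $|a|^{2}$.
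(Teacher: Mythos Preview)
Your proof is correct, and it takes a genuinely different (and somewhat slicker) route from the paper's.

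Both arguments begin identically: take the model in $\mathcal{C}_2$, use Proposition~\ref{normalizerdetyermination}(ii) to place any isomorphism $\phi_\sigma:\,^{\sigma}\mathcal{S}\to\mathcal{S}$ inside $\operatorname{GL}_{2,Z}(\mathbb{C})$, and then read off from the monomial $Z^{d-1}Y$ that $\phi_\sigma$ must be upper triangular, i.e.\ of the form $[\alpha X+\beta Y:\gamma Y:Z]$. At this point the two proofs diverge. The paper invokes the case decomposition $\mathcal{C}_2=\bigcup_{s}\mathcal{C}^{(s)}_2$ of Lemma~\ref{geometcomp2} and, by inspecting which of $X^{d-1}Y$ or $X^{d-2}Y^2$ is absent in each subfamily, forces $\beta=0$; once $\phi_\sigma$ is diagonal with root-of-unity entries, the cocycle $\phi_\sigma\circ\,^{\sigma}\phi_\sigma=1$ is immediate. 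You, by contrast, never eliminate $\beta$: you go straight to $N=M_\sigma\overline{M_\sigma}$, observe that projectively $N\in\operatorname{Aut}_+(\mathcal{S})=\langle\operatorname{diag}(1,1,\zeta_n)\rangle$, and then use the triangular shape only to see that the $(1,1)$-entry of $A\overline{A}$ is $|a|^2$, a positive real; since it must also equal the root of unity $\zeta_n^{-k}$, it is $1$ and the cocycle holds.

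What each buys: your route bypasses Lemma~\ref{geometcomp2} entirely and avoids any case analysis, making the argument shorter and more conceptual (the positivity of $|a|^2$ kills the potential $\mathbb{Z}/2\mathbb{Z}$ obstruction in one stroke). The paper's route, on the other hand, actually proves the stronger statement that the isomorphism $\phi_\sigma$ can be taken to be \emph{diagonal}, which is information your argument does not extract.
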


\begin{proof}

By Proposition \ref{geomcomp}, we may take a non-singular plane model $F_{\mathcal{S}}(X,Y,Z)=0$ for $\mathcal{S}$ in $\mathbb{P}^2_{\mathbb{C}}$ of degree $d$ in the family $\mathcal{C}_2$, such that $\operatorname{Aut}_+(F_{\mathcal{S}})=\langle\operatorname{diag}(1,1,\zeta_n)\rangle$. Hence $N_{\operatorname{Aut}_+(F_{\mathcal{S}})}(\mathbb{C})$ equals to $\operatorname{GL}_{2,Z}(\mathbb{C})$, using Proposition \ref{normalizerdetyermination}. Because of the monomial term $Z^{d-1}Y$ in the defining equation for $F_{\mathcal{S}}(X,Y,Z)=0$, the action of the normalizer $\operatorname{GL}_{2,Z}(\mathbb{C})$ is trivial except possibly an isomorphism of the shape $$[\alpha X+\beta Y:\gamma Y:Z]\in\operatorname{PGL}_3(\mathbb{C}).$$

We will show that $\beta=0$ and so $\mathcal{S}$ is isomorphic to its complex conjugate $^{\sigma}\mathcal{S}$ via an isomorphism $\phi_{\sigma}=\operatorname{diag}(1,\lambda,\mu)$ where $\lambda$ and $\mu$ are roots of unity. In particular, $\phi_{\sigma}$ satisfies the Weil's condition of descent ( $\phi_{\sigma}\circ\,^{\sigma}\phi_{\sigma}=1$), and $\mathbb{R}$ is a field of definition, see Theorem \ref{weilcocylcle}. First, by construction, the union decomposition $\mathcal{C}_2=\bigcup_{s=1}^{4}\mathcal{C}^{(s)}_{2}$ in Lemma \ref{geometcomp2} is well-defined up to $\mathbb{C}$-isomorphisms, that is even $[\alpha X+\beta Y:\gamma Y:Z]$ does not define an isomorphism between two curves in two distinct components. Second,  if $C$ belongs to any of the subfamilies $\mathcal{C}^{(s)}_{2}$ for $s=1,2,3$, then $\alpha^{d-1}\beta=0$, since $X^{d-1}Y$ does not appear in the defining equation $F_{\mathcal{S}}(X,Y,Z)=0$. Thus $\beta=0$, because  $[\alpha X+\beta Y:\gamma Y:Z]$ must be invertible. We mimic the argument by switching the monomial term $X^{d-1}Y$ with $X^{d-2}Y^2$ for $\mathcal{C}^{(4)}_{2}$.
\end{proof}

\subsection{The stratum $\widetilde{{(\mathcal{M}_{g}^{Pl})}}_{n,\operatorname{diag}}^{h}$ with $n|\,d$ and $d$ odd}
\begin{thm}\label{coroddsign}
Let $\mathcal{S}\in\widetilde{{(\mathcal{M}_{g}^{Pl})}}_{n,\operatorname{diag}}^{h}$ with $n\,|\,d$ and $\operatorname{Aut}_+(\mathcal{S})\simeq\Z/n\Z$, where $d\geq5$ is odd.
If $\mathbb{R}$ is the field of moduli for $\mathcal{S}$, relative to the Galois extension $\mathbb{C}/\mathbb{R}$, then it is also a field of definition.
\end{thm}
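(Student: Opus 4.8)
The plan is to run the Weil--cocycle descent argument as in the proof of Theorem~\ref{6.5}, the decisive new input being that the hypotheses force $\operatorname{Aut}_+(\mathcal{S})$ to have \emph{odd} order, which automatically trivializes the descent obstruction. Indeed, since $d$ is odd and $n\mid d$, the integer $n$ is odd; moreover $n>1$, so $n\geq 3$.

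First I would fix, via Proposition~\ref{geomcomp}, a non-singular plane model $F_{\mathcal{S}}(X,Y,Z)=0$ of degree $d$ for $\mathcal{S}$ in the family $\mathcal{C}_1$, normalized so that $\operatorname{Aut}_+(F_{\mathcal{S}})=\langle\operatorname{diag}(1,1,\zeta_n)\rangle=:G$. By Proposition~\ref{normalizerdetyermination}(ii) we have $N_{G}(\mathbb{C})=\operatorname{GL}_{2,Z}(\mathbb{C})$, and a direct block computation shows that every element of $\operatorname{GL}_{2,Z}(\mathbb{C})$ centralizes $G$ (this also follows from $G$ being cyclic of order $\geq 3$ generated by a homology). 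Now, since $\mathbb{R}$ is the field of moduli, $\mathcal{S}$ is $\mathbb{C}$-isomorphic to its complex conjugate $^{\sigma}\mathcal{S}$, where $\sigma$ denotes complex conjugation; choose an isomorphism $\phi_{\sigma}\colon {}^{\sigma}\mathcal{S}\to\mathcal{S}$, which — as for all smooth plane curves of degree $\geq 4$ — is induced by an element of $\operatorname{PGL}_3(\mathbb{C})$. Because $\overline{\zeta_n}=\zeta_n^{-1}$ generates the same group, $\operatorname{Aut}_+({}^{\sigma}F_{\mathcal{S}})=G$ as a subgroup of $\operatorname{PGL}_3(\mathbb{C})$, so conjugation by $\phi_{\sigma}$ carries $G$ onto $G$; hence $\phi_{\sigma}\in N_{G}(\mathbb{C})$ and $\phi_{\sigma}$ commutes with $G$.

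The remaining step is formal. Put $\alpha:=\phi_{\sigma}\circ{}^{\sigma}\phi_{\sigma}$, an element of $\operatorname{Aut}_+(\mathcal{S})=G$. Applying $\sigma$ and using $\sigma^{2}=1$ gives ${}^{\sigma}\phi_{\sigma}\circ\phi_{\sigma}={}^{\sigma}\alpha$; substituting ${}^{\sigma}\phi_{\sigma}=\phi_{\sigma}^{-1}\circ\alpha$ yields $\phi_{\sigma}^{-1}\circ\alpha\circ\phi_{\sigma}={}^{\sigma}\alpha$, and since $\phi_{\sigma}$ centralizes $G$ this forces ${}^{\sigma}\alpha=\alpha$. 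Writing $\alpha=\operatorname{diag}(1,1,\zeta_n^{j})$ we get $\operatorname{diag}(1,1,\zeta_n^{-j})=\operatorname{diag}(1,1,\zeta_n^{j})$, i.e. $\zeta_n^{2j}=1$; as $n$ is odd, $\alpha=\operatorname{id}$. Hence $\phi_{\sigma}\circ{}^{\sigma}\phi_{\sigma}=\operatorname{id}$, so $\phi_{\sigma}$ satisfies Weil's cocycle condition and Theorem~\ref{weilcocylcle} gives that $\mathbb{R}$ is a field of definition for $\mathcal{S}$.

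I expect the only point needing care to be the identification of the automorphism group of the conjugate model together with the claim that $\phi_{\sigma}$ lies in $N_{G}(\mathbb{C})$ and centralizes $G$; once this is settled the conclusion is merely the coprimality $\gcd(2,n)=1$ killing the obstruction in $H^{2}\!\big(\operatorname{Gal}(\mathbb{C}/\mathbb{R}),\operatorname{Aut}_+(\mathcal{S})\big)$. (When $n=d$ the statement is in any case already covered by Corollary~\ref{oddsign2} and Theorem~\ref{oddsign1}, since $\mathcal{S}$ then has odd signature; the argument above treats all admissible divisors $n\mid d$ uniformly.)
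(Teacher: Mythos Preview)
Your proof is correct and follows a genuinely different, more elementary route than the paper's.

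The paper's argument proceeds by reduction to the odd-signature theorem of Artebani--Quispe (Theorem~\ref{oddsign1} together with Corollary~\ref{oddsign2}). It first disposes of the case $n=d$ directly via odd signature; then, for $1<n<d$, it compares the model $\mathcal{C}'_1$ with the ``core'' curve $\mathcal{C}'_{1,0}:Z^d+L_{d,Z}=0$ (which has the larger automorphism group of order $d$), establishes an \emph{observation} relating isomorphisms between conjugates of $\mathcal{C}'_{1,0}$ to those of $\mathcal{C}'_1$ up to an element of $\langle\operatorname{diag}(\zeta_d^{-1},\zeta_d^{-1},1)\rangle$, and finally checks by a chain of equalities that the lifted isomorphism $\widetilde{\phi}_{\sigma}$ still satisfies Weil's cocycle condition. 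Your approach bypasses all of this: you note that the normalizer $N_G(\mathbb{C})=\operatorname{GL}_{2,Z}(\mathbb{C})$ is in fact the centralizer of $G=\langle\operatorname{diag}(1,1,\zeta_n)\rangle$, so any chosen $\phi_{\sigma}$ commutes with $G$; a short manipulation then shows that the obstruction $\alpha=\phi_{\sigma}\circ{}^{\sigma}\phi_{\sigma}\in G$ is $\sigma$-fixed, and since $|G|=n$ is odd this forces $\alpha=1$. This is precisely the explicit vanishing of the $2$-torsion obstruction in a group of odd order, and it needs no reference to odd signature or to the auxiliary curve $\mathcal{C}'_{1,0}$. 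The paper's approach, by contrast, highlights the structural link to the quotient $\mathcal{S}/\operatorname{Aut}_+(\mathcal{S})$ and to the general odd-signature criterion, at the cost of a longer and more delicate verification.
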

\begin{proof}
Theorem \ref{oddsign1} and Corollary \ref{oddsign2} gives the result when $\mathcal{S}\in\widetilde{{(\mathcal{M}_{g}^{Pl})}}_{d,\operatorname{diag}}^{h}$, i.e. when $n=d$. Therefore, we can suppose that $1<n<d$. By Proposition \ref{geomcomp}, we may take a non-singular plane model $F_{\mathcal{S}}(X,Y,Z)=0$ in $\mathbb{P}^2_{\mathbb{C}}$ in the family $\mathcal{C}_1$, whose automorphism group $\operatorname{Aut}_+(F_{\mathcal{S}})=\langle\operatorname{diag}(1,1,\zeta_n)\rangle$. More precisely, we have an equation of the form
$$\mathcal{C}'_1:\,Z^d+\sum_{1\leq t<d/n}Z^{d-tn}L_{tn,Z}+L_{d,Z}=0,$$
such that $L_{tn,Z}\neq0$ for some $1\leq t<d/n$.

We first show the next observation:

\noindent\textit{Observation.} Let $\mathcal{C}'_{1,0}:Z^d+L_{d,Z}=0$. Then, for any isomorphism $$\phi_{\sigma}:\,^{\sigma}\mathcal{C}'_{1,0}\rightarrow\mathcal{C}'_{1,0},$$
we can find $\eta_{\sigma}\in\langle\operatorname{diag}(\zeta_d^{-1},\zeta_d^{-1},1)\rangle$ and an isomorphism $\widetilde{\phi}_{\sigma}:\,^{\sigma}\mathcal{C}'_{1}\rightarrow\mathcal{C}'_{1}$, such that $\eta_{\sigma}\circ\phi_{\sigma}$ and $\widetilde{\phi}_{\sigma}$ have the same action on $\mathcal{C}'_{1}$.
\begin{proof}
Because $N_{\operatorname{Aut}_+(F_{\mathcal{S}})}(\mathbb{C})=\operatorname{GL}_{2,Z}(\mathbb{C})$ (Proposition \ref{normalizerdetyermination}), then the geometric fibers which are isomorphic are obtained through the action of $\operatorname{GL}_{2,Z}(\mathbb{C})$. It is also clear that an element $\phi\in\operatorname{GL}_{2,Z}(\mathbb{C})$, which acts non-trivially on the family $\mathcal{C}'_{1,0}$, also has non-trivial action on $\mathcal{C}'_1$. The converse is true, unless $\phi\in\langle\operatorname{diag}(\zeta_d^{-1},\zeta_d^{-1},1)\rangle$. Therefore, the number of isomorphic geometric fibers in $\mathcal{C}'_{1,0}$ is exactly the same number of those in the family $\mathcal{C}'_1$, coming by the action of $\operatorname{GL}_{2,Z}(\mathbb{C})\setminus\langle\operatorname{diag}(\zeta_d^{-1},\zeta_d^{-1},1)\rangle$. Consequently, the action of any isomorphism $\phi_{\sigma}:\,^{\sigma}\mathcal{C}'_{1,0}\rightarrow\mathcal{C}'_{1,0}$ can always be extended to an action $\widetilde{\phi}_{\sigma}:\,^{\sigma}\mathcal{C}'_{1}\rightarrow\mathcal{C}'_{1}$. In particular, the composition $\widetilde{\phi}_{\sigma}\circ\phi_{\sigma}^{-1}$ acts trivially on $Z^d+L_{d,Z}=0$, that is $\widetilde{\phi}_{\sigma}\circ\phi_{\sigma}^{-1}=\eta_{\sigma}\in\operatorname{Aut}_+(\mathcal{C}'_{1,0})=\langle\operatorname{diag}(\zeta_d^{-1},\zeta_d^{-1},1)\rangle$.
\end{proof}
Next, by the aid of Theorem \ref{oddsign1} and Corollary \ref{oddsign2}, we have an isomorphism $\phi_{\sigma}:\,^{\sigma}\mathcal{C}'_{1,0}\rightarrow\mathcal{C}'_{1,0}$ in $\operatorname{GL}_{2,Z}(\mathbb{C})$, satisfying the Weil's cocycle condition of descent ($\phi_{\sigma}\circ\,^{\sigma}\phi_{\sigma}=1$), see Theorem \ref{weilcocylcle}. Using the previous observation, we also have an isomorphism $\widetilde{\phi}_{\sigma}:=\eta_{\sigma}\circ\phi_{\sigma}:\,^{\sigma}\mathcal{C}'_{1}\rightarrow\mathcal{C}'_{1}$ in $\operatorname{GL}_{2,Z}(\mathbb{C})$, where $\eta_{\sigma}:=\operatorname{diag}(\epsilon_{\sigma}^{-1},\epsilon_{\sigma}^{-1},1)$ for some $d$-th root of unity $\epsilon_{\sigma}$. Hence, it satisfies  $$(^{\sigma}L_{tn,Z})(X,Y)=L_{tn,Z}(\widetilde{\phi}_{\sigma}(X,Y))=\epsilon^{-tn}_{\sigma}L_{tn,Z}(\phi_{\sigma}(X,Y)),$$
for all $1\leq t<d/n$. That is,
\begin{eqnarray*}
L_{tn,Z}(X,Y)&=&(^{\sigma^2}L_{tn,Z})(X,Y)=\,^{\sigma}(\epsilon^{-tn}_{\sigma}L_{tn,Z}(\phi_{\sigma}(X,Y)))\\
&=&\sigma(\epsilon^{-tn}_{\sigma})\,^{\sigma}(L_{tn,Z})(^{\sigma}\phi_{\sigma}(X,Y)):=\sigma(\epsilon^{-tn}_{\sigma})\,^{\sigma}(L_{tn,Z})(X',Y')\\
&=&\sigma(\epsilon^{-tn}_{\sigma})\epsilon^{-tn}_{\sigma}L_{tn,Z}(\phi_{\sigma}(X',Y'))=(\epsilon_{\sigma}\,\sigma(\epsilon_{\sigma}))^{-tn}L_{tn,Z}((\phi_{\sigma}\circ\,^{\sigma}\phi_{\sigma})(X,Y))\\
&=&L_{tn,Z}(((\eta_{\sigma}\circ\,^{\sigma}\eta_{\sigma})\circ(\phi_{\sigma}\circ\,^{\sigma}\phi_{\sigma}))(X,Y))\\
&=&L_{tn,Z}(((\eta_{\sigma}\circ\phi_{\sigma})\circ(^{\sigma}\eta_{\sigma}\circ\,^{\sigma}\phi_{\sigma}))(X,Y))\\
&=&L_{tn,Z}((\widetilde{\phi}_{\sigma}\circ\,^{\sigma}\widetilde{\phi}_{\sigma})(X,Y)).
\end{eqnarray*}
So $\widetilde{\phi}_{\sigma}$ satisfies the Weil's condition of descent as well, and
$\mathbb{R}$ is a field of definition.
\end{proof}

\subsection{The stratum $\widetilde{{(\mathcal{M}_{g}^{Pl})}}_{n,\operatorname{diag}}^{h}$ with $n\,|\,d$ and $d$ even}\label{devensection}
There is no plane Riemann surfaces $\mathcal{S}$ of genus $3$ with automorphism group $\operatorname{PGL}_3(\mathbb{C})$-conjugate to $\langle \operatorname{diag}(1,1,\zeta_4)\rangle$, see \cite{He} or \cite{Bars} for more details.. Hence, the stratum $\widetilde{{(\mathcal{M}_{3}^{Pl})}}_{4,\operatorname{diag}}^{h}$ is empty, and we have nothing to say in this case.

Take $m,r\in\mathbb{N}$ such that $2mr>5$ and $r$ is odd when $m$ does. Consider a binary form $G(X,Y)\in\mathbb{C}[X,Y]\setminus\mathbb{R}[X,Y]$ given by
$$G(X,Y):=\prod_{i=1}^{r} (X^m-a_iY^m)(X^m+\,^{\sigma}a_iY^m),$$
for some $a_1,...,a_r\in\mathbb{C}$ such that the next conditions hold: $G(X,1)$ has no repeated zeros, the map $[\alpha:\beta]\mapsto[\beta:\alpha]$ does not map the zero set of $G(X,Y)$ into itself, for any root of unity $\zeta$ we should have $\{a_i,-1/a_i^c\}\neq\{\zeta a_i,-\zeta/a_i^c\}$, and when $m=3$, the map $[\alpha:\beta]\mapsto[-\alpha+(1+\sqrt{3})\beta:(1+\sqrt{3})\alpha+\beta]$ does not map the zero set of $G(X,Y)$ into itself.
\begin{prop}(B. Huggins, \cite[Chapter 7, \S1]{Hug})\label{hugginsexample}
Following the above notations, let $\mathcal{S}$ be a plane Riemann surface of degree $>5$ given in $\mathbb{P}^2_{\mathbb{C}}$ by an equation of the form
$$F_{\mathcal{S}}(X,Y,Z):=Z^{2mr}-G(X,Y)=0$$
Then, the automorphism group $\operatorname{Aut}_+(F_{\mathcal{S}})$ is diagonal and equals $$\langle\operatorname{diag}(\zeta_m,1,1),\,\operatorname{diag}(1,\zeta_m,1),\,\operatorname{diag}(1,1,\zeta_{2mr})\rangle.$$
Moreover, the field of moduli $M_{\mathbb{C}/\mathbb{R}}(\mathcal{S})=\mathbb{R}$, but it is not a field of definition.
\end{prop}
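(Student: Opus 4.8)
The plan is to treat the three assertions in turn: that $H:=\langle\operatorname{diag}(\zeta_m,1,1),\operatorname{diag}(1,\zeta_m,1),\operatorname{diag}(1,1,\zeta_{2mr})\rangle$ is contained in $\operatorname{Aut}_+(F_{\mathcal S})$ (easy), that it exhausts it (the heart of the matter), and the two statements on the field of moduli.

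First I would note that, since the coefficient of $X^{2mr}$ in $G$ equals $1$ and $G(X,1)$ has no repeated roots, $G$ is a squarefree binary form of degree $d=2mr\ge 6$; hence $F_{\mathcal S}=Z^{d}-G(X,Y)$ is smooth, because a singular point would lie on $Z=0$ and force a multiple root of $G$. Thus $\mathcal S$ is a smooth plane curve of genus $(d-1)(d-2)/2\ge 3$, its plane $g^2_d$ is unique, and $\operatorname{Aut}_+(\mathcal S)$ is realised as a finite subgroup of $\operatorname{PGL}_3(\mathbb C)$. As $G\in\mathbb C[X^m,Y^m]$ and $Z^{d}$ is invariant under $Z\mapsto\zeta_{2mr}Z$, the three generators of $H$ preserve $F_{\mathcal S}$, so $H\subseteq\operatorname{Aut}_+(F_{\mathcal S})$; note also $H\subseteq\operatorname{D}(\mathbb C)$ and $\overline H=H$.

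Next, to show $\operatorname{Aut}_+(\mathcal S)=H$ — which I expect to be the main obstacle — put $G':=\operatorname{Aut}_+(\mathcal S)$. It contains the homology $\operatorname{diag}(1,1,\zeta_{2mr})$ of period $2mr\ge 6>4$, so Theorem~\ref{homologies} forces $G'$ to fix a point, a line or a triangle, and Theorem~\ref{Harui,Mitchell} then excludes the primitive groups of case~(iii). Using Lemma~\ref{fixedpoints}, the points fixed by all of $H$ are exactly the reference points $P_1,P_2,P_3$ and the lines it leaves invariant are exactly $L_1,L_2,L_3$ (for $m=1$, $G'$ is cyclic generated by a single homology and one argues directly). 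Hence $G'$ permutes $\{P_1,P_2,P_3\}$ and $\{L_1,L_2,L_3\}$, so it stabilises the reference triangle $\{XYZ=0\}$ — here, when $m=3$, one must also discard the exceptional triangle on which the order-two transformation listed in the hypotheses acts, which is precisely what the fourth genericity condition does — and therefore $G'\subseteq\operatorname{D}(\mathbb C)\rtimes\langle[X:Z:Y],[Z:X:Y]\rangle$. Among the homologies in $H$, those of period $>m$, in particular the one of period $2mr$, all have centre $P_3$ and axis $L_3$; since $G'$ permutes its own period-$2mr$ homologies it must fix $P_3$ and $L_3$, so $G'\subseteq\operatorname{GL}_{2,Z}(\mathbb C)$ by Proposition~\ref{normalizerdetyermination}(ii). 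Finally, the $2\times 2$ block of any element of $G'$ has to carry the branch divisor $\{G=0\}\subset\mathbb P^1_{[X:Y]}$ of the projection from $P_3$ onto itself (up to a scalar on $Z$): the hypotheses that $[\alpha:\beta]\mapsto[\beta:\alpha]$ does not preserve $\{G=0\}$ and that the sets $\{a_i,-1/{}^\sigma a_i\}$ are not simultaneously scaled by a root of unity force the image of $G'$ in $\operatorname{PGL}_2(\mathbb C)$ to be $\langle\operatorname{diag}(\zeta_m,1)\rangle$ and the kernel of $G'\to\operatorname{PGL}_2(\mathbb C)$ to be $\langle\operatorname{diag}(1,1,\zeta_{2mr})\rangle$, whence $G'=H$.

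It remains to treat the field of moduli. Since $G$ and ${}^\sigma G$ have $\operatorname{PGL}_2(\mathbb C)$-equivalent zero divisors on $\mathbb P^1_{[X:Y]}$ in a way compatible with the degree-$2mr$ cyclic cover, $\mathcal S$ is $\mathbb C$-isomorphic to ${}^\sigma\mathcal S$; as $\operatorname{Gal}(\mathbb C/\mathbb R)=\{1,\sigma\}$ this gives $U_{\mathbb C/\mathbb R}(\mathcal S)=\operatorname{Gal}(\mathbb C/\mathbb R)$, hence $M_{\mathbb C/\mathbb R}(\mathcal S)=\mathbb R$. For the failure of descent I would use Weil's criterion (Theorem~\ref{weilcocylcle}): over $\mathbb C/\mathbb R$ it suffices to prove that there is no isomorphism $f_\sigma:{}^\sigma\mathcal S\to\mathcal S$ with $f_\sigma\circ{}^\sigma f_\sigma=\operatorname{id}$, i.e.\ that $\mathcal S$ has anticonformal automorphisms but no anticonformal involution. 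By the previous step every such $f_\sigma$ is $\operatorname{PGL}_3(\mathbb C)$-linear and lies in $N_{\operatorname{PGL}_3(\mathbb C)}(H)$; using Proposition~\ref{normalizerdetyermination} one writes $f_\sigma=A\cdot f_0$ with $A\in H$ and $f_0$ a single fixed transformation normalising $H$, so that the cocycle equation becomes a finite system of equations in roots of unity built from the $a_i$, and the hypothesis $\{a_i,-1/{}^\sigma a_i\}\ne\{\zeta a_i,-\zeta/{}^\sigma a_i\}$ for every root of unity $\zeta$ is exactly the obstruction making it unsolvable. This completes the proof.
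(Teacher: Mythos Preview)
The paper itself gives no proof of this proposition: it is simply quoted from Huggins' thesis \cite[Chapter~7, \S1]{Hug}, and the text passes directly to Corollary~\ref{evendegreeexample}. So there is no argument in the paper against which to compare your attempt.

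As for the attempt on its own terms, the overall strategy is the natural one and the appeals to Theorems~\ref{homologies} and~\ref{Harui,Mitchell} and to Proposition~\ref{normalizerdetyermination} are appropriate. Several steps, however, are asserted rather than argued. The implication ``the common fixed points of $H$ are $\{P_1,P_2,P_3\}$, hence $G'$ permutes $\{P_1,P_2,P_3\}$'' tacitly assumes that $H$ is normal in $G'=\operatorname{Aut}_+(\mathcal S)$, which you have not established; the sound route is rather to note that whichever point, line, or triangle $G'$ leaves invariant must in particular be $H$-invariant as a set, and then to enumerate the finitely many $H$-invariant points, lines and triangles. The isomorphism $\mathcal S\cong{}^\sigma\mathcal S$ is never written down explicitly, and the final non-descent paragraph (``the cocycle equation becomes a finite system \ldots\ and the hypothesis is exactly the obstruction'') is a claim rather than a computation. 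If this is to stand as a self-contained proof, those three places need to be filled in.
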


\begin{cor}\label{evendegreeexample}
For any integer $d=2(2k+1)\geq6$, pseudoreal-plane Riemann surfaces in $\widetilde{{(\mathcal{M}_{g}^{Pl})}}_{d,\operatorname{diag}}^{h}$ exist. In other words,
there exist pseudoreal-plane Riemann surfaces $\mathcal{S}$ of genus $g=(d-1)(d-2)/2$ with $\operatorname{Aut}_+(\mathcal{S})$ conjugated to $\langle\operatorname{diag}(1,1,\zeta_{d})\rangle$.
\end{cor}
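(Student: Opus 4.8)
The plan is to obtain these surfaces as the $m=1$ specialization of Huggins' family in Proposition \ref{hugginsexample}. Write $d=2(2k+1)$ with $k\geq1$, and put $m:=1$, $r:=2k+1$. Then $2mr=d>5$ and $r$ is odd (as it must be, since $m=1$ is odd), so the numerical hypotheses of Proposition \ref{hugginsexample} are met; it remains only to feed it admissible parameters $a_1,\dots,a_r$.

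First I would check that admissible parameters exist for $m=1$. In this case the binary form of Proposition \ref{hugginsexample} is $G(X,Y)=\prod_{i=1}^{r}(X-a_iY)(X+{}^{\sigma}a_iY)$, and the conditions imposed there — that $G(X,1)$ have no repeated zero, that $G\notin\mathbb{R}[X,Y]$, that the involution $[\alpha:\beta]\mapsto[\beta:\alpha]$ not carry the zero locus of $G$ into itself, and the finitely many root-of-unity non-degeneracy conditions — exclude only a thin subset of the parameter space $\mathbb{C}^{r}$ (Zariski-closed, resp. a proper real-analytic set for the reality constraint); the $m=3$ clause is vacuous here. Hence a generic tuple $(a_1,\dots,a_r)\in(\mathbb{C}\setminus\mathbb{R})^{r}$ is admissible — for instance $a_i=p_i(1+\sqrt{-1})$ with $p_1<\dots<p_r$ distinct primes works, as a short modulus estimate on the zero set confirms. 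For any such choice, Proposition \ref{hugginsexample} produces a plane Riemann surface $\mathcal{S}:Z^{d}-G(X,Y)=0$ of degree $d$, hence of genus $g=(d-1)(d-2)/2$, with $M_{\mathbb{C}/\mathbb{R}}(\mathcal{S})=\mathbb{R}$ not a field of definition; thus $\mathcal{S}$ is pseudoreal-plane.

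Second I would read off the automorphism group. For $m=1$ the generators $\operatorname{diag}(\zeta_1,1,1)$ and $\operatorname{diag}(1,\zeta_1,1)$ are trivial, so Proposition \ref{hugginsexample} gives $\operatorname{Aut}_+(\mathcal{S})=\langle\operatorname{diag}(1,1,\zeta_{d})\rangle\cong\Z/d\Z$. Each non-trivial element $\operatorname{diag}(1,1,\zeta_d^{j})$ is $\operatorname{PGL}_3(\mathbb{C})$-conjugate, via a coordinate transposition, to $\operatorname{diag}(\zeta_d^{j},1,1)$, hence is an homology in the sense of Definition \ref{homologydefn}; therefore $\operatorname{Aut}_+(\mathcal{S})$ is a finite diagonal subgroup of $\operatorname{PGL}_3(\mathbb{C})$ consisting entirely of homologies, and, being cyclic of order $d$, the value $n=d$ is maximal in the sense of Definition \ref{pseudoreal-planefirsttype}. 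Consequently $\mathcal{S}\in\widetilde{{(\mathcal{M}_{g}^{Pl})}}_{d,\operatorname{diag}}^{h}$ and, by the previous paragraph, it is pseudoreal; the two assertions of the corollary are thereby established simultaneously.

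The only step needing (entirely routine) care is verifying that the constraints on the $a_i$ in Proposition \ref{hugginsexample} are simultaneously satisfiable when $m=1$; I anticipate no genuine obstacle, since each constraint removes only a negligible set, there are finitely many of them, and an explicit witness can be exhibited if desired.
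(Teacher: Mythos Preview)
Your proposal is correct and follows exactly the same approach as the paper: the paper's proof is the one-line ``Let $m=1$ and $r=2k+1$ in Proposition \ref{hugginsexample}.'' You have simply (and appropriately) unpacked what that specialization yields, including the observation that the first two generators become trivial when $m=1$ and the verification that admissible parameters exist.
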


\begin{proof}
Let $m=1$ and $r=2k+1$ in Proposition \ref{hugginsexample}.
\end{proof}
\subsubsection{The stratum $\widetilde{{(\mathcal{M}_{g}^{Pl})}}_{\frac{d}{p},\operatorname{diag}}^{h}$ with $d$ even and $p\,|\,d$ is prime}
It remains yet the study of $\widetilde{{(\mathcal{M}_{g}^{Pl})}}_{n,\operatorname{diag}}^{h}$ when the degree $d\geq4$ is even and $n$ divides $d$ properly. In this case, the field of moduli does not need to be a field of definition. The first (explicit) example appears for genus $g=3$ curves, and we address the reader to \cite[\S4, Lemma 4.2, Proposition 4.3]{SAUL1}, for a smooth plane quartic curve over $\mathbb{C}$, not definable over its field of moduli $\mathbb{R}$, and
whose conformal automorphism group is $\Z/2\Z$. In what follows, we generalize this construction for higher degrees:
\begin{example}\label{examplesndividesproperlydeven}
Take $d=2pm$ an integer with $m\geq3$ odd and $p$ is a prime number. Define a Riemann surface $\mathcal{S}$ in $\mathbb{P}^2_{\mathbb{C}}$ by an equation of the form
\begin{eqnarray}\label{example101}
F_{\mathcal{S}}(X,Y,Z):=Z^{d}+Z^{\frac{d}{p}}g(X,Y)-f(X,Y)=0,
\end{eqnarray}
where
\begin{eqnarray*}
g(X,Y)&:=&\prod_{i=1}^{\frac{(p-1)d}{2p}}(X-a_iY)(X+\frac{1}{a_i}Y)\\
f(X,Y)&:=&\prod_{i=1}^{\frac{d}{2}}(X-b_iY)(X+1/^{\sigma}b_iY),
\end{eqnarray*}
with $a_i\in\mathbb{R}\setminus\{0\}$, for $1\leq i\leq (p-1)d/2p$, and $b_i\in\mathbb{C}\setminus\{0\}$, for $1\leq i\leq d/2$.
Suppose also that $f(X,Y)$ and $g(X,Y)$ have no repeated zeros. We also choose the $a_i's$ in the way that $g(X,Y)$ is not $\psi_c$-invariant or $\psi_{a,b}$-invariant for any $\psi_c:(X:Y)\mapsto(Y:cX)$ and $\psi_{a,b}:(X:Y)\mapsto(X+aY:bX-Y)$ in $\operatorname{PGL}_{2}(\mathbb{C})$.
\begin{thm}
For any integer $d=2pm$ with $m\geq3$ odd and $p$ is prime, pseudoreal-plane Riemann surfaces $\mathcal{S}$ in $\widetilde{{(\mathcal{M}_{g}^{Pl})}}_{\frac{d}{p},\operatorname{diag}}^{h}$ exist. That is,
there exist pseudoreal-plane Riemann surfaces of genus $g=(d-1)(d-2)/2$ with conformal automorphism group conjugated to $\langle\operatorname{diag}(1,1,\zeta_{d/p})\rangle$.
\end{thm}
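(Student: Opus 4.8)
The plan is to check that the surface $\mathcal{S}$ of \eqref{example101} realizes the claim; concretely one must verify: (a) $\mathcal{S}$ is smooth, hence a plane Riemann surface of genus $g=(d-1)(d-2)/2$; (b) $\operatorname{Aut}_+(\mathcal{S})$ is $\operatorname{PGL}_3(\mathbb{C})$-conjugate to $\langle\operatorname{diag}(1,1,\zeta_{d/p})\rangle$, so that $\mathcal{S}\in\widetilde{{(\mathcal{M}_{g}^{Pl})}}_{\frac{d}{p},\operatorname{diag}}^{h}$; (c) $M_{\mathbb{C}/\mathbb{R}}(\mathcal{S})=\mathbb{R}$; and (d) $\mathbb{R}$ is not a field of definition. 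Throughout put $n:=d/p=2m$ and observe that $F_{\mathcal{S}}=Z^{d}+Z^{n}g(X,Y)-f(X,Y)$ with $\deg g=\tfrac{(p-1)d}{p}=n(p-1)$ and $\deg f=d$, so $F_{\mathcal{S}}$ is of the form $\mathcal{C}_1$ in Proposition \ref{geomcomp} for the divisor $n\mid d$ (with $L_{d,Z}=-f$); since the $a_i$ are real, $g$ has real coefficients, whence $^{\sigma}g=g$ for $\sigma$ the complex conjugation, and $^{\sigma}\mathcal{S}:\,Z^{d}+Z^{n}g-{}^{\sigma}f=0$.

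For (a): in the chart $Z\neq0$ the affine model is $g-f+1=0$, and Euler's relation applied to the homogeneous forms $g$ and $f$ shows that a singular point there would be a point with $\nabla g=\nabla f$, $g=-p$ and $f=1-p$, which does not occur for sufficiently general $b_i$; on the line $Z=0$ one has $F_{\mathcal{S}}=-f$ while $\partial_{Z}F_{\mathcal{S}}$ vanishes along $Z=0$ (as $n\geq2$), so a singularity would be a multiple zero of $f$, excluded by hypothesis; thus $\mathcal{S}$ is a smooth plane curve of degree $d$, hence irreducible of genus $(d-1)(d-2)/2$. For (b): the homology $\operatorname{diag}(1,1,\zeta_{n})$ fixes $F_{\mathcal{S}}$ since $\zeta_{n}^{n}=1$ and $n\mid d$, so $\langle\operatorname{diag}(1,1,\zeta_{n})\rangle\leq\operatorname{Aut}_+(\mathcal{S})$; being an homology of period $n\geq4$, Theorem \ref{homologies} forces $\operatorname{Aut}_+(\mathcal{S})$ to leave invariant a point, a line or a triangle, and a fixed-point analysis via Lemma \ref{fixedpoints} identifies this with the centre $(0{:}0{:}1)$ and the axis $\{Z=0\}$ of the homology, so $\operatorname{Aut}_+(\mathcal{S})$ lies in the normalizer $\operatorname{GL}_{2,Z}(\mathbb{C})$ of Proposition \ref{normalizerdetyermination}(ii). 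An element of $\operatorname{GL}_{2,Z}(\mathbb{C})$ acting by $M\in\operatorname{GL}_2$ on $(X,Y)$ and by $\nu$ on $Z$ preserves $\mathcal{S}$ exactly when $g\circ M=\nu^{n(p-1)}g$ and $f\circ M=\nu^{d}f$, i.e.\ $M$ fixes the zero sets of $g$ and of $f$ up to scalars; the hypotheses on $g$ together with the generality of the degree-$d$ binary form $f$ force $M=\operatorname{id}$ (the only nontrivial projective symmetry of the zero set of $g$ permitted by the hypotheses, $\tau_0:(X{:}Y)\mapsto(-Y{:}X)$, does not preserve the zero set of the non-real $f$), so $\nu^{\gcd(n(p-1),d)}=\nu^{n}=1$; hence $\operatorname{Aut}_+(\mathcal{S})=\langle\operatorname{diag}(1,1,\zeta_{n})\rangle$ and $\mathcal{S}\in\widetilde{{(\mathcal{M}_{g}^{Pl})}}_{\frac{d}{p},\operatorname{diag}}^{h}$.

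For (c): the binary forms $f$ and $^{\sigma}f$ have zero sets $\{b_i\}\cup\{-1/{}^{\sigma}b_i\}$ and $\{{}^{\sigma}b_i\}\cup\{-1/b_i\}$, which are interchanged by $\tau_0$, while $\tau_0$ permutes the zero set of $g$; therefore $f\circ\tau_0=c\,{}^{\sigma}f$ and $g\circ\tau_0=c'g$ for explicit constants $c,c'$ (roots of unity times $\prod{}^{\sigma}b_i/b_i$), and for a suitable choice of the $b_i$ compatible with all the listed conditions there is a scalar $\nu$ with $\nu^{d}=c$ and $\nu^{n(p-1)}=c'$, so that $\phi_{\sigma}:=[-Y:X:\nu Z]$ is an isomorphism $^{\sigma}\mathcal{S}\to\mathcal{S}$. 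Hence $\mathcal{S}\cong{}^{\sigma}\mathcal{S}$, so $U_{\mathbb{C}/\mathbb{R}}(\mathcal{S})=\operatorname{Gal}(\mathbb{C}/\mathbb{R})$ and $M_{\mathbb{C}/\mathbb{R}}(\mathcal{S})=\mathbb{R}$.

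For (d): by (b)--(c) every isomorphism $f_{\sigma}:{}^{\sigma}\mathcal{S}\to\mathcal{S}$ is $\phi_{\sigma}$ composed with an element of $\operatorname{Aut}_+(\mathcal{S})$, hence has the shape $[-Y:X:\lambda Z]$ with $\lambda\in\mathbb{C}^{\ast}$, and a direct matrix computation gives
\[
f_{\sigma}\circ{}^{\sigma}f_{\sigma}=\operatorname{diag}(-1,-1,\lambda\,{}^{\sigma}\lambda)=\operatorname{diag}(1,1,-|\lambda|^{2})\ \ \text{in}\ \operatorname{PGL}_3(\mathbb{C}),
\]
which equals the identity only if $-|\lambda|^{2}=1$, impossible. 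Thus for no choice of descent data does the Weil cocycle relation $f_{\sigma}\circ{}^{\sigma}f_{\sigma}=\operatorname{id}$ hold, so $\mathbb{R}$ is not a field of definition by Theorem \ref{weilcocylcle} (equivalently $\mathcal{S}$ has no anticonformal involution); with (c) this makes $\mathcal{S}$ pseudoreal-plane, and since the parameters $a_i,b_i$ can be chosen to meet all the requirements at once, such surfaces exist. I expect (b) to be the main obstacle: ruling out that any automorphism enlarges $\langle\operatorname{diag}(1,1,\zeta_{n})\rangle$ forces one to combine the Mitchell--Harui classification (Theorems \ref{Harui,Mitchell} and \ref{homologies}) with the normalizer description of Proposition \ref{normalizerdetyermination} and to exploit the genericity hypotheses to kill the residual symmetry $\tau_0$; the conceptual heart of (d) is the minus sign in $\operatorname{diag}(1,1,-|\lambda|^{2})$, reflecting that no square modulus equals $-1$.
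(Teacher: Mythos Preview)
Your steps (a), (c), and (d) are essentially the paper's argument; in particular the Weil obstruction $f_{\sigma}\circ{}^{\sigma}f_{\sigma}=\operatorname{diag}(1,1,-|\lambda|^{2})$ is exactly the computation the paper carries out (with the explicit choice $\nu=\zeta_{2d}^{p}$ and the extra normalization $\prod b_i\in\mathbb{R}$, which you would also need to pin down the scalar in (c)).

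The gap is in (b), and it is precisely where you yourself expect it. From the fact that $\psi=\operatorname{diag}(1,1,\zeta_{n})$ is a homology of period $n\geq4$, Theorem~\ref{homologies} only tells you that $\operatorname{Aut}_+(\mathcal{S})$ leaves invariant \emph{some} point, line or triangle; it does not identify this configuration with the centre and axis of $\psi$, and Proposition~\ref{normalizerdetyermination}(ii) gives the normalizer of $\langle\psi\rangle$, not a group that a priori contains $\operatorname{Aut}_+(\mathcal{S})$. Concretely, nothing you have written excludes case~(ii) of Theorem~\ref{Harui,Mitchell}, where $\operatorname{Aut}_+(\mathcal{S})$ fixes a triangle but no point and no line; in that situation an element of $\operatorname{Aut}_+(\mathcal{S})$ may swap the vertex $(0{:}0{:}1)$ with a vertex on $Z=0$, conjugating $\psi$ to a homology with a different centre, so $\langle\psi\rangle$ need not be normal and $\operatorname{Aut}_+(\mathcal{S})\not\subseteq\operatorname{GL}_{2,Z}(\mathbb{C})$. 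The paper treats this case separately via the Fermat--Klein descendant structure: it rules out $K_d$ by divisibility, conjugates into $\operatorname{Aut}_+(F_d)=(\Z/d\Z)^2\rtimes\operatorname{S}_3$, observes that the $Z$-monomials force the image in $\operatorname{S}_3$ to have order at most $2$, and then kills the residual involution by genericity of $f$. Your ``fixed-point analysis via Lemma~\ref{fixedpoints}'' does not substitute for this. Even within case~(i) your reduction is incomplete: after landing in $\operatorname{GL}_{2,Z}(\mathbb{C})$ you still need the classification of finite subgroups of $\operatorname{PGL}_2(\mathbb{C})$ (cyclic, dihedral, $\operatorname{A}_4$, $\operatorname{S}_4$, $\operatorname{A}_5$) to conclude that the image is cyclic once involutions are excluded; the hypotheses on $g$ forbid only the involutions $\psi_c,\psi_{a,b}$, not arbitrary $M$. (Incidentally, your singled-out symmetry $\tau_0=[-Y:X]$ \emph{is} the involution $\psi_{-1}$, so the hypotheses are intended to exclude it as an automorphism of $\mathcal{S}$ even though it visibly permutes the zero set of $g$; this is another reason one must argue via the full equation and genericity of $f$, not via $g$ alone.)
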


\begin{proof}
 Let $\mathcal{S}$ be a complex curve defined by an equation of the form (\ref{example101}) such that $\prod_{i=1}^{\frac{d}{2}}b_i\in\mathbb{R}$.
We first show that $\mathcal{S}$ has no singular points in $\mathbb{P}^2_{\mathbb{C}}$.
Since $F_{\mathcal{S}}(X,0,Z)=Z^d+(X^{p-1}Z)^{\frac{d}{2}}-X^d=0$ has no repeated zeros, the common zeros of $F_{\mathcal{S}}(X,0,Z)$ and $\frac{\partial F_{\mathcal{S}}}{\partial X}(X,0,Z)$ do not exist.
Moreover, $\frac{\partial F_{\mathcal{S}}}{\partial X}(X,1,Z)=Z^{\frac{d}{p}}g'(X,1)-f'(X,1)$ and
$\frac{\partial F_{\mathcal{S}}}{\partial Z}(X,1,Z)=\frac{d}{p}Z^{\frac{d}{p}-1}(pZ^{\frac{(p-1)d}{p}}+g(X,1))$. But $f(X,Y)$ is square free, then $(X:1:0)$ gives no singularities on $F_{\mathcal{S}}(X,Y,Z)=0$.
Furthermore, if we substitute $g(X,1)=-pZ^{\frac{(p-1)d}{p}}$ into $F_{\mathcal{S}}(X,1,Z)=\frac{\partial F_{\mathcal{S}}}{\partial X}(X,1,Z)=0$, we get
that $\mathcal{S}$ is singular only if $f(X,1)g'(X,1)^p=(1-p)f'(X,1)^p$, that is when $f(X,1)$ has repeated zeros, a contradiction.
Hence $\mathcal{S}$ is plane.

Second, one easily checks that $F_{\mathcal{S}}(X,Y,Z)=0$ is isomorphic to its complex conjugate $(^{\sigma}F_{\mathcal{S}})(X,Y,Z)=0$ via the isomorphism $\phi_{\sigma}:=[-Y:X:\zeta_{2d}^pZ]$.
Consequently, $\mathbb{R}$ is the field of moduli for $\mathcal{S}$, relative to $\mathbb{C}/\mathbb{R}$.
If we assume that our claim on $\operatorname{Aut}_+(\mathcal{S})$ is true, then $\mathbb{R}$ is not a field of definition for $\mathcal{S}$.
To see this, let $\phi_{\sigma}'$ be any isomorphism. Then, $\phi_{\sigma}\circ\phi_{\sigma}'^{-1}\in\operatorname{Aut}_+(\mathcal{S})$, and so
$\phi_{\sigma}'=\phi_{\sigma}\circ\operatorname{diag}(1,1,\zeta_{d/p})^{r}$ for some integer $0\leq r<d/p$. However, any such $\phi_{\sigma}'$ does not satisfy Weil's condition of descent ($\phi_{\sigma}'\circ\,^{\sigma}\phi_{\sigma}'=1$) in Theorem \ref{weilcocylcle}, since $\phi_{\sigma}'\circ\,^{\sigma}\phi_{\sigma}'=\operatorname{diag}(1,1,-1)\neq1$. Thus $\mathcal{S}$ is pseudoreal.

Now, we prove the claim on $\operatorname{Aut}_+(\mathcal{S})$ by using quite similar techniques as in \cite{BaBa1, BaBa3, BaBa2}.
Obviously, $\psi:=\operatorname{diag}(1,1,\zeta_{d/p})\in\operatorname{Aut}_+(F_{\mathcal{S}})$ is an homology of order $d/p\geq4$.
Therefore, $\operatorname{Aut}_+(F_{\mathcal{S}})$ fixes a point, a line or a triangle, by Theorem \ref{homologies}.
In particular, it is not conjugate to any of the finite primitive group mentioned in Theorem \ref{Harui,Mitchell}-(iii).

Now, we treat each of the following subcases:
\begin{enumerate}[(i)]
\item A line $L\subset\mathbb{P}^{2}_{\mathbb{C}}$ and a point $P\notin L$ are leaved invariant: By \cite[Theorem 2.1]{Ha}, we can think about $\operatorname{Aut}_+(F_{\mathcal{S}})$ in a short exact sequence
\scriptsize
$$
\xymatrix
{
1\ar[r]  & \mathbb{C}^*\ar[r]                    & \operatorname{GL}_{2,Y}(\mathbb{C})\ar[r]^{\varrho}& \operatorname{PGL}_2(\mathbb{C})\ar[r]& 1         \\
         &                              &                                        &                              &\\
1\ar[r]  & \langle\psi\rangle\ar[r]\ar@{^{(}->}[uu] & \operatorname{Aut}_+(F_{\mathcal{S}})\ar[r]\ar@{^{(}->}[uu] & G\ar[r]\ar@{^{(}->}[uu]& 1
}
$$
\normalsize
where $G$ is conjugate to a cyclic group $\mathbb{Z}/m\mathbb{Z}$ of
order $m\leq d-1$, a Dihedral group $\operatorname{D}_{2m}$ of order $2m$
with $m|(d-2)$, one of the alternating groups $\operatorname{A}_4$, $\operatorname{A}_5$, or to
the symmetry group $\operatorname{S}_4$. Any such $G$, which is not cyclic, contains an element of order $2$.
Let $\psi'\in\operatorname{Aut}_+(F_{\mathcal{S}})$ such that $\varrho(\psi')$ has order $2$. Then, $\varrho(\psi')$ has the shape $\psi_c$ or $\psi_{a,b}$
for some $a,b,c\in\mathbb{C}\setminus\{0\}$, which is absurd by our assumptions on $g(X,Y)$.
Consequently, $G=\varrho(\operatorname{Aut}_+(F_{\mathcal{S}}))$ is cyclic, generated by the image of a specific $\psi_G\in \operatorname{GL}_{2,Y}(\mathbb{C})$.
This would lead to a polynomial expression of $b_i's$ in terms of the $a_i's$, hence we still have infinitely many possibilities to choose the $b_i's$ such that
$f(X,Y)$ not $\langle\varrho(\psi_G)\rangle$-invariant. In particular, $|G|=1$ and $\operatorname{Aut}_+(F_{\mathcal{S}})$ is $\operatorname{PGL}_3(\mathbb{C})$-conjugate to $\langle\operatorname{diag}(1,1,\zeta_{d/p})\rangle$.
\item A triangle $\Delta$ is fixed by $\operatorname{Aut}_+(F_{\mathcal{S}})$ and neither a line nor a point is leaved invariant:
It follows by the proof of Theorem 2.1 in \cite{Ha} that $(\mathcal{S},\operatorname{Aut}_+(\mathcal{S}))$ should be a descendant of the Fermat curve $F_d$
or the Klein curve $K_d$ as in Theorem \ref{Harui,Mitchell}. Note that $d/p$ does not divide $|\operatorname{Aut}_+(K_d)|=3(d^2-3d+3)$,
    e.g. \cite[Propositions 3.5]{Ha}. Therefore, $(\mathcal{S},\operatorname{Aut}_+(\mathcal{S}))$ is not a descendant of $K_d$.
    Hence, $\exists \phi\in\operatorname{PGL}_3(\mathbb{C})$ where $H:=\phi^{-1}\operatorname{Aut}_+(F_{\mathcal{S}})\phi\leq\operatorname{Aut}_+(F_d)$. It is also well known (e.g. \cite[Proposition 3.3]{Ha}) that $\operatorname{Aut}_+(F_d)$ is a semidirect product of $\operatorname{S}_3=\langle T:=[Y:Z:X],R:=[X:Z:Y]\rangle$ acting on $(\mathbb{Z}/d\mathbb{Z})^2=\langle[\zeta_dX:Y:Z],[X:\zeta_dY:Z]\rangle.$ Thus any element of $\phi^{-1}\operatorname{Aut}_+(\mathcal{S})\phi$ has the shape $DR^iT^j$, for some $0\leq i\leq1$ and $0\leq j\leq2$ and $D$ is of diagonal shape in $\operatorname{PGL}_3(\mathbb{C})$. It is straightforward to check that any $DT^j$ and $DRT^j$ with $j\neq0$ has order
    $3<d/p$. Thus $\phi^{-1}\psi\phi$ has also a diagonal shape, and then we may take $\phi$ in the normalizer of $\langle\psi\rangle$, up to a change of variables in $\operatorname{Aut}_+(F_d)$.  In this case, we can think about $\operatorname{Aut}_+(F_{\mathcal{S}})$ in the commutative diagram
    \scriptsize
    $$
    \xymatrix
    {
     1\ar[r]  & (\mathbb{Z}/d\mathbb{Z})^2\ar[r]                    & \operatorname{Aut}_+(F_d)\ar[r]^{\varrho}& \operatorname{S}_3\ar[r]& 1         \\
         &                              &                                        &                              &\\
    1\ar[r]  & \operatorname{Ker}(\varrho|_{H})=\langle\psi\rangle\ar[r]\ar@{^{(}->}[uu] & H\ar[r]\ar@{^{(}->}[uu] & G:=\operatorname{Im}(\varrho|_{H})\ar[r]\ar@{^{(}->}[uu]& 1
    }
    $$
\normalsize
The variable $Z$ in the transformed defining equation via $\phi$ appears exactly as the original equation in the statement. Hence, $G$ is at most cyclic of order $2$, since otherwise $H$ must have an element of the shape $[\zeta_d^sY:\zeta_d^tZ:X]$ or $[\zeta_d^sZ:\zeta_d^tX:Y]$ for some integers $s,t$, which is not possible. For the same reason, $G$ is then generated by a certain $\varrho([\zeta_d^mY:\zeta_d^nX:Z])$, and as before, it only requires to exclude finitely many possibilities of $\{b_i\}\subset\mathbb{C}\setminus\{0\}$ such that $f(\phi(X,Y))$ is not $\langle\varrho([\zeta_d^mY:\zeta_d^nX:Z])\rangle$-invariant, where $\phi$ is the restriction of $\phi$ on $\mathbb{C}[X,Y]$.
\end{enumerate}
 \end{proof}
%
%
\end{example}


\subsection{On the stratum for $\Z/2\Z\times\Z/2\Z$}\label{sectionremainingsirutations}
It remains the study when $\operatorname{Aut}_+(\mathcal{S})$ is conjugate to the Klein four group $\langle\operatorname{diag}(1,-1,1),\operatorname{diag}(1,1,-1)\rangle$.

We first generalize \cite[Lemma 10]{BaBa3}:
\begin{prop}
Let $\mathcal{S}$ be a plane Riemann surface of odd degree $d\geq5$. Then, $\operatorname{Aut}_+(\mathcal{S})$ can not be $\operatorname{PGL}_3(\mathbb{C})$-conjugate to a $\Z/2\Z\times\Z/2\Z,\,\operatorname{A}_4,\,\operatorname{S}_4$ or $\operatorname{A}_5$.
\end{prop}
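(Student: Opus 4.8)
The plan is to show that if $\operatorname{Aut}_+(\mathcal S)$ were $\operatorname{PGL}_3(\mathbb C)$-conjugate to one of the four listed groups, then $\mathcal S$ would be forced to be singular, contradicting that $\mathcal S$ is a (smooth) plane Riemann surface of degree $d$. First I would observe that each of $\mathbb Z/2\mathbb Z\times\mathbb Z/2\mathbb Z$, $\operatorname A_4$, $\operatorname S_4$, $\operatorname A_5$ contains a subgroup isomorphic to $\mathbb Z/2\mathbb Z\times\mathbb Z/2\mathbb Z$; hence it suffices to treat the Klein four group case, i.e. to show a smooth plane curve of odd degree $d\ge5$ cannot have $\operatorname{Aut}_+(\mathcal S)$ containing a copy of $\mathbb Z/2\mathbb Z\times\mathbb Z/2\mathbb Z$. (One must check that each of $\operatorname A_4,\operatorname S_4,\operatorname A_5$ actually has such a subgroup embedded inside $\operatorname{PGL}_3(\mathbb C)$ as automorphisms — this is standard, since in each case the Klein four subgroup can be realized diagonally up to conjugation.)

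Next I would use the classification of automorphism groups of plane curves. Since $\mathbb Z/2\mathbb Z\times\mathbb Z/2\mathbb Z$ is not primitive and is not contained in $\operatorname{Hess}_{216}$ in a way producing period-$3$ homologies, Theorem~\ref{Harui,Mitchell} applies: either the group fixes a line and a point off it, or it fixes a triangle. Actually the cleanest route is to note that any order-$2$ element of $\operatorname{PGL}_3(\mathbb C)$ is an homology (conjugate to $\operatorname{diag}(1,-1,1)$ or $\operatorname{diag}(1,1,-1)$ — there is only one conjugacy class of involutions), so the three involutions of the Klein four group are homologies, and by Proposition~\ref{classific.} the whole group is conjugate to $\varrho_0(\mathbb Z/2\mathbb Z\times\mathbb Z/2\mathbb Z)=\langle\operatorname{diag}(1,-1,1),\operatorname{diag}(1,1,-1)\rangle$. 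So up to a change of coordinates we may assume the defining polynomial $F_{\mathcal S}(X,Y,Z)$ is invariant under $(X,Y,Z)\mapsto(X,-Y,Z)$ and $(X,Y,Z)\mapsto(X,Y,-Z)$, hence every monomial $X^aY^bZ^c$ of $F_{\mathcal S}$ has both $b$ and $c$ even.

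Then comes the degree parity argument, which is the heart of the matter. Since $d$ is odd, every monomial $X^aY^bZ^c$ with $a+b+c=d$ and $b,c$ even must have $a$ odd, so in particular $a\ge1$: the variable $X$ divides $F_{\mathcal S}$. But that makes the curve reducible (it contains the line $X=0$), contradicting smoothness — or, put differently, $F_{\mathcal S}$ vanishes identically on $X=0$, so every point of that line is singular (actually a whole component). Either way $\mathcal S$ cannot be a smooth plane curve. This forces $\operatorname{Aut}_+(\mathcal S)$ to contain no copy of the Klein four group, and therefore cannot be conjugate to any of $\mathbb Z/2\mathbb Z\times\mathbb Z/2\mathbb Z,\operatorname A_4,\operatorname S_4,\operatorname A_5$.

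The step I expect to require the most care is the reduction to the diagonal normal form: one must be sure that a faithful representation of the Klein four group (or of $\operatorname A_4,\operatorname S_4,\operatorname A_5$) into $\operatorname{PGL}_3(\mathbb C)$, arising as conformal automorphisms of a genus $\ge3$ plane curve, really does have all its involutions acting as homologies — i.e. that no involution is a "non-homology." Since $\operatorname{PGL}_3(\mathbb C)$ has a single conjugacy class of order-$2$ elements and each is diagonalizable with eigenvalues $1,1,-1$ (up to scalar), every involution is an homology with a fixed line and fixed center, so this is automatic; but it is worth spelling out, because it is exactly the input that lets Proposition~\ref{classific.} pin the group down to the diagonal form $\varrho_0(\mathbb Z/2\mathbb Z\times\mathbb Z/2\mathbb Z)$ and makes the parity obstruction bite. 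For the larger groups, one applies the same parity obstruction to the distinguished Klein four subgroup, after conjugating it into diagonal form.
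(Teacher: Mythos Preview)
Your argument is correct and reaches the same endpoint as the paper --- the parity obstruction that invariance under $\operatorname{diag}(1,-1,1)$ and $\operatorname{diag}(1,1,-1)$ forces every monomial of $F_{\mathcal S}$ to have even $Y$- and $Z$-exponents, hence odd $X$-exponent when $d$ is odd, so $X\mid F_{\mathcal S}$ --- but the route to the diagonal normal form is genuinely different. The paper invokes the Harui--Mitchell classification (Theorem~\ref{Harui,Mitchell}): it first rules out the descendant case by checking that $4\nmid |\operatorname{Aut}_+(F_d)|=6d^{2}$ and $4\nmid |\operatorname{Aut}_+(K_d)|=3(d^{2}-3d+3)$ for odd $d$, and then places the Klein four group in the short exact sequence coming from a fixed point off the curve, analysing the image in $\operatorname{PGL}_2(\mathbb C)$ before arriving at the parity contradiction. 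You bypass all of this by observing directly that every involution in $\operatorname{PGL}_3(\mathbb C)$ is a homology and that two commuting involutions can be simultaneously diagonalised, so any copy of $\mathbb Z/2\mathbb Z\times\mathbb Z/2\mathbb Z$ inside $\operatorname{PGL}_3(\mathbb C)$ is already conjugate to $\varrho_0(\mathbb Z/2\mathbb Z\times\mathbb Z/2\mathbb Z)$. Your approach is shorter and avoids the Fermat/Klein divisibility check and the exact-sequence bookkeeping; the paper's approach has the virtue of staying within the structural framework used throughout. One small point worth tightening: Proposition~\ref{classific.} assumes the group is already diagonal, so your appeal to it needs the simultaneous-diagonalisation step made explicit. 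Concretely, choose lifts $\hat\sigma,\hat\tau\in\operatorname{GL}_3(\mathbb C)$ with $\hat\sigma^{2}=\hat\tau^{2}=I$; from $\hat\sigma\hat\tau=c\,\hat\tau\hat\sigma$ one gets $c^{3}=1$ by taking determinants, and a direct check with $\hat\sigma=\operatorname{diag}(1,1,-1)$ shows $c\neq 1$ forces $\hat\tau=0$, so in fact $c=1$ and the lifts commute in $\operatorname{GL}_3(\mathbb C)$, hence are simultaneously diagonalisable.
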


\begin{proof}
By \cite{Ha, Mit}, a $\Z/2\Z\times\Z/2\Z\subset\operatorname{PGL}_3(\mathbb{C})$, giving invariant a non-singular plane model $F_{\mathcal{S}}(X,Y,Z)=0$ of degree $d\geq4$ over $\mathbb{C}$, should fix a point not lying on $\mathcal{S}$ or $(\mathcal{S},\operatorname{Aut}_+(S))$ must be a descendant of the Fermat curve $F_d:\,X^d+Y^d+Z^d=0$ or the Klein curve $K_d:\,X^{d-1}Y+Y^{d-1}Z+Z^{d-1}X=0$.
But if $d$ is odd, then $4$ does not divide $|\operatorname{Aut}_+(F_d)|=6d^2$ and $|\operatorname{Aut}_+(K_d)|=3(d^2-3d+3)$, e.g. \cite[Propositions $3.3,\,3.5$]{Ha}. In particular, $(\mathcal{S},\operatorname{Aut}_+(\mathcal{S}))$ can not be a descendant of $F_d$ or $K_d$, and we can think about $\Z/2\Z\times\Z/2\Z$, in a short exact sequence of the form $1\rightarrow N=1\rightarrow H\rightarrow H\rightarrow1$, where $H$ is $\operatorname{PGL}_2(\mathbb{C})$-conjugate to $\Z/2\Z\times\Z/2\Z$, by Harui's main result \cite[Theorem 2.1]{Ha}. Let $H=\langle\eta_1,\eta_2\rangle\leq\operatorname{PGL}_2(\mathbb{C})$ acts on the variables $Y,Z$, then we can assume, up to conjugation of groups in $\operatorname{PGL}_2(\mathbb{C})$, that $\eta_{1}=\operatorname{diag}(1,-1)$ and $\eta_2=[aY+bZ:cY-aZ]$. 
Because $\eta_1\eta_2=\eta_2\eta_1$, we get $\eta_2=diag(-1,1)$ or $[bZ:cY]$ for some $bc\neq0$. Therefore, $\mathcal{S}$ should have a non-singular plane model of the form
$Z^{d-1}L_{1,Z}+Z^{d-3}L_{3,Z}+...+Z^2L_{d-2,Z}+L_{d,Z}=0$, and $Y^{d-1}L_{1,Y}+Y^{d-3}L_{3,Y}+...+Y^2L_{d-2,Y}+L_{d,Y}=0$ simultaneously. This reduces $F_{\mathcal{S}}(X,Y,Z)$ to $X\,\cdot\,G(X,Y,Z)$ for some homogenous polynomial of degree $d-1$, a contradiction to non-singularity. Hence, there is no plane Riemann surface $\mathcal{S}$ of odd degree $d$ with $\Z/2\Z\times\Z/2\Z\leq\operatorname{Aut}_+(\mathcal{S})$. The other part of the statement is immediate, since any of these group contains a subgroup isomorphic to $\Z/2\Z\times\Z/2\Z$.
\end{proof}
Now, let $\mathcal{S}$ be a plane Riemann surface of even degree $d\geq4$ with $$\langle\operatorname{diag}(1,-1,1),\,\operatorname{diag}(1,1,-1)$$
acting on a non-singular plane model $F_{\mathcal{S}}(X,Y,Z)=0$ over $\mathbb{C}$. It is obvious that $F_{\mathcal{S}}(X,Y,Z)=0$ must be of the form 
\begin{equation}\label{geomcompletefamilyremaining}
X^d+Y^d+Z^d+\sum_{\tiny{\begin{array}{c}
                           0\leq s,t,u\leq (d/2)-1\\
                             s+t+u=d/2
                          \end{array}}}\,\alpha_{s,t,u}(X^sY^tZ^u)^2=0,
                          \end{equation}
                          for $\alpha_{s,t,u}\in\mathbb{C}$.
\begin{example}
The family $\mathcal{C}_{a,b,c}$ defined by
$$X^4+Y^4+Z^4+aX^2Y^2+bX^2Z^2+cY^2Z^2=0,$$
with $a^2,b^2,c^2$ are pairwise distinct, and $a^2+b^2+c^2-abc,a^2,b^2,c^2\neq4$, has automorphism group $\Z/2\Z\times\Z/2\Z$.
\begin{thm}(Artebani-Quispe, \cite[\S 4]{SAUL1})\label{thmlastcase}
Let $\mathcal{S}$ be plane Riemann surface in the family $\mathcal{C}_{a,b,c}$ above. Then, $\mathcal{S}$ is not pseudoreal.
   \end{thm}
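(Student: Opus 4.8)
The plan is to show that whenever $\mathbb{R}$ is the field of moduli of $\mathcal{S}=\mathcal{C}_{a,b,c}$ relative to $\mathbb{C}/\mathbb{R}$, it is in fact a field of definition; since a pseudoreal surface is by definition one whose field of moduli is $\mathbb{R}$ but which is not definable over $\mathbb{R}$, this yields the assertion (if $M_{\mathbb{C}/\mathbb{R}}(\mathcal{S})\neq\mathbb{R}$ there is nothing to prove). By construction $\operatorname{Aut}_+(\mathcal{S})=\langle\operatorname{diag}(1,-1,1),\operatorname{diag}(1,1,-1)\rangle$ exactly, and (arguing as in Proposition \ref{normalizerdetyermination}, using Lemma \ref{fixedpoints}) its common fixed locus in $\mathbb{P}^2_{\mathbb{C}}$ is the set $\{P_1,P_2,P_3\}$ of reference vertices; hence any $\mathbb{C}$-isomorphism between two members of the family $\mathcal{C}_{a,b,c}$ normalizes this group, so it is a monomial matrix, i.e. an element of $N:=\operatorname{D}(\mathbb{C})\rtimes\operatorname{S}_3$. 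Writing $\sigma$ for complex conjugation, ${}^{\sigma}\mathcal{C}_{a,b,c}=\mathcal{C}_{\bar a,\bar b,\bar c}$, which again lies in the family; so $M_{\mathbb{C}/\mathbb{R}}(\mathcal{S})=\mathbb{R}$ produces an isomorphism $\phi_{\sigma}=D\cdot P_{\pi}\colon{}^{\sigma}\mathcal{S}\to\mathcal{S}$ with $D=\operatorname{diag}(\lambda,\mu,\nu)\in\operatorname{D}(\mathbb{C})$ and $P_{\pi}$ the permutation matrix of some $\pi\in\operatorname{S}_3$.

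The first step is to pin down $\pi$. Matching the monomials $X^4,Y^4,Z^4$ forces $\lambda^4=\mu^4=\nu^4$, and matching the three ``mixed'' coefficients and then squaring (to cancel the resulting fourth-root-of-unity sign ambiguities) shows that the set $\{a^2,b^2,c^2\}$ is $\sigma$-stable and that $\pi$ is the unique permutation carrying $(\bar a^2,\bar b^2,\bar c^2)$ to $(a^2,b^2,c^2)$ componentwise. Since $a^2,b^2,c^2$ are pairwise distinct, only two cases survive: either all three are real, so $\pi=\operatorname{id}$ and $\phi_{\sigma}=D$ is diagonal; or exactly one of them is real and the other two are complex conjugate, so $\pi$ is the transposition of the two ``non-real'' slots (a $3$-cycle is excluded because $\sigma^2=\operatorname{id}$). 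In the second case let $\tau_{\pi}$ be the unique nontrivial element of $\operatorname{Aut}_+(\mathcal{S})\cong(\Z/2\Z)^2$ fixed by conjugation by $P_{\pi}$; a direct check shows $\tau_{\pi}$ is precisely the homology attached to the coordinate slot $\pi$ leaves fixed, which is the one carrying the real parameter.

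The second step is to verify Weil's cocycle condition (Theorem \ref{weilcocylcle}), whose only nontrivial instance for $\operatorname{Gal}(\mathbb{C}/\mathbb{R})$ is the relation $\phi_{\sigma}\circ{}^{\sigma}\phi_{\sigma}=\operatorname{id}$, possibly after replacing $\phi_{\sigma}$ by $\alpha\circ\phi_{\sigma}$ with $\alpha\in\operatorname{Aut}_+(\mathcal{S})$. Since $c_{\sigma}:=\phi_{\sigma}\circ{}^{\sigma}\phi_{\sigma}$ lies in $\operatorname{Aut}_+(\mathcal{S})$ it has order at most $2$, and because $\operatorname{Aut}_+(\mathcal{S})$ is abelian with $\pm1$ entries (so ${}^{\sigma}\alpha=\alpha$ and $D$ commutes with diagonal automorphisms), one computes $(\alpha\phi_{\sigma})\circ{}^{\sigma}(\alpha\phi_{\sigma})=\alpha\cdot\pi(\alpha)\cdot c_{\sigma}$, where $\pi$ now denotes the action of $P_{\pi}$ by conjugation on $\operatorname{Aut}_+(\mathcal{S})$. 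When $\pi=\operatorname{id}$ one gets $c_{\sigma}=\operatorname{diag}(|\lambda|^2,|\mu|^2,|\nu|^2)$, a scalar because $|\lambda|=|\mu|=|\nu|$, so already $c_{\sigma}=\operatorname{id}$. When $\pi$ is a transposition, $c_{\sigma}=D\cdot P_{\pi}\overline{D}P_{\pi}$ is, up to a positive real scalar, $\operatorname{diag}(w,\bar w,1)$; requiring it to lie in $\operatorname{Aut}_+(\mathcal{S})$ leaves only $w\in\{1,-1\}$, i.e. $c_{\sigma}\in\{\operatorname{id},\tau_{\pi}\}$. As the map $\alpha\mapsto\alpha\cdot\pi(\alpha)$ on $(\Z/2\Z)^2$ has image exactly $\{\operatorname{id},\tau_{\pi}\}$ when $\pi$ is a transposition, one may choose $\alpha$ with $\alpha\cdot\pi(\alpha)=c_{\sigma}$, making $(\alpha\phi_{\sigma})\circ{}^{\sigma}(\alpha\phi_{\sigma})=\operatorname{id}$. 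Either way the descent criterion applies, $\mathbb{R}$ is a field of definition, and $\mathcal{S}$ is real, hence not pseudoreal.

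The conceptual reductions — normalizer equals monomial matrices, and Weil descent for $\mathbb{C}/\mathbb{R}$ collapses to one cocycle relation — are immediate from the material already in place, and reading off $\pi$ from the coefficients is routine bookkeeping. The one delicate point, and the heart of the proof, is the transposition case: one must check that the particular $c_{\sigma}$ attached to an arbitrary admissible $\phi_{\sigma}$ always lands in the image $\{\operatorname{id},\tau_{\pi}\}$ of the norm-type map $\alpha\mapsto\alpha\cdot\pi(\alpha)$, so that the potential obstruction in $H^2(\operatorname{Gal}(\mathbb{C}/\mathbb{R}),\operatorname{Aut}_+(\mathcal{S}))$ vanishes. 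This works precisely because the $\pi$-fixed involution $\tau_{\pi}$ is the one associated with the unique real parameter among $a,b,c$, i.e. with the coordinate slot that the transposition $\pi$ does not move.
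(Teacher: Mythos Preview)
Your argument is correct. The paper itself does not supply a proof of this particular statement --- it is quoted from Artebani--Quispe --- but immediately afterward proves the generalization to all even degrees $d\geq4$, and that proof, specialized to $d=4$, follows exactly your line: the normalizer of $\operatorname{Aut}_+(\mathcal{S})$ in $\operatorname{PGL}_3(\mathbb{C})$ is the monomial group, the permutation part of $\phi_{\sigma}$ must square to the identity, the diagonal case satisfies Weil's cocycle for free since $\lambda\bar\lambda=\mu\bar\mu=1$, and in each transposition case one corrects $\phi_{\sigma}$ by an element of $\operatorname{Aut}_+(\mathcal{S})$ to achieve $\phi_{\sigma}\circ{}^{\sigma}\phi_{\sigma}=1$. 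Your presentation differs only cosmetically: you read the permutation $\pi$ off the Galois action on $\{a^{2},b^{2},c^{2}\}$ and phrase the transposition step via the norm map $\alpha\mapsto\alpha\cdot\pi(\alpha)$ on $(\Z/2\Z)^{2}$, whereas the paper excludes $3$-cycles directly from $\phi_{\sigma}\circ{}^{\sigma}\phi_{\sigma}\in\operatorname{Aut}_+(\mathcal{S})$ and writes down the adjusted isomorphism $[\epsilon X:Z:Y]$ explicitly.
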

\begin{rem}
Let $G$ be the group acting on the triples $(a,b,c)\in\mathbb{C}^3$, generated by
\begin{multicols}{2}
$$
g_1(a,b,c):=(b,a,c),
$$
$$
g_2(a,b,c):=(b,c,a),
$$

$$
g_3(a,b,c):=(-a,-b,c),
$$

$$
 g_4(a,b,c):=(a,-b,-c).
$$
\end{multicols}
E. W. Howe \cite[Proposition 2]{howe} observed that any isomorphism between $\mathcal{C}_{a,b,c}$ and $\mathcal{C}_{g(a,b,c)}$, for $g\in G$, is defined over $\mathbb{Q}(i)$.
\end{rem}
\end{example}
We will show that Theorem \ref{thmlastcase} is true in general:
\begin{thm}
Let $\mathcal{S}$ be a plane Riemann surface of even degree $d\geq4$ whose conformal automorphism group is $\operatorname{PGL}_{3}(\mathbb{C})$-conjugate to $\langle\operatorname{diag}(1,-1,1),\,\operatorname{diag}(1,1,-1)\rangle.$
Then, $\mathcal{S}$ is not pseudoreal.                        \end{thm}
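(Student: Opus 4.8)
The plan is to assume that $\mathbb{R}$ is the field of moduli of $\mathcal{S}$ (otherwise $\mathcal{S}$ is not pseudoreal for a trivial reason) and then to exhibit an isomorphism that satisfies Weil's cocycle condition of Theorem \ref{weilcocylcle}, forcing $\mathbb{R}$ to be a field of definition and $\mathcal{S}$ not to be pseudoreal. Put $V:=\langle\operatorname{diag}(1,-1,1),\operatorname{diag}(1,1,-1)\rangle$ and fix a non-singular plane model $F_{\mathcal{S}}(X,Y,Z)=0$ of degree $d$ with $\operatorname{Aut}_+(F_{\mathcal{S}})=V$ exactly; by the discussion preceding the statement it has the form (\ref{geomcompletefamilyremaining}), although only $\operatorname{Aut}_+(F_{\mathcal{S}})=V$ will be used below. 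Let $\sigma$ denote complex conjugation. Since $V$ consists of integral diagonal matrices, $\operatorname{Aut}_+(\,^{\sigma}\mathcal{S})=\,^{\sigma}(\operatorname{Aut}_+(\mathcal{S}))=\,^{\sigma}V=V$.

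First I would pin down the normalizer of $V$. By Lemma \ref{fixedpoints}(i) the three non-trivial elements of $V$ are homologies whose centers are $P_1,P_2,P_3$ and whose axes are the opposite sides of the reference triangle; intersecting the fixed loci one finds that the set of points of $\mathbb{P}^2_{\mathbb{C}}$ fixed pointwise by $V$ is exactly $\{P_1,P_2,P_3\}$. Consequently $N_{\operatorname{PGL}_3(\mathbb{C})}(V)$ is the group of monomial matrices: writing such a matrix as $DP$ with $D$ diagonal and $P$ a permutation matrix, the class of $P$ in $N_{\operatorname{PGL}_3(\mathbb{C})}(V)/\operatorname{D}(\mathbb{C})\cong\operatorname{S}_3$ is well defined. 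Because the field of moduli is $\mathbb{R}$, I may choose an isomorphism $\phi_{\sigma}:\,^{\sigma}\mathcal{S}\to\mathcal{S}$; it conjugates $\operatorname{Aut}_+(\,^{\sigma}\mathcal{S})=V$ onto $\operatorname{Aut}_+(\mathcal{S})=V$, so $\phi_{\sigma}$ is monomial, with some permutation part $\tau\in\operatorname{S}_3$.

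Next I would study $v:=\phi_{\sigma}\circ\,^{\sigma}\phi_{\sigma}$, which is an automorphism of $\mathcal{S}$, hence a diagonal element of $V$. As $\,^{\sigma}\phi_{\sigma}$ is again monomial with permutation part $\tau$ (conjugation affects only the entries), $v$ is monomial with permutation part $\tau^{2}$; being diagonal this forces $\tau^{2}=1$, so $\tau$ is trivial or a transposition. The remaining isomorphisms $\,^{\sigma}\mathcal{S}\to\mathcal{S}$ are the $w\phi_{\sigma}$ with $w\in V$, and using $\,^{\sigma}w=w$ one computes $(w\phi_{\sigma})\circ\,^{\sigma}(w\phi_{\sigma})=w\,(\phi_{\sigma}w\phi_{\sigma}^{-1})\,v$; so it suffices to find $w\in V$ with $w\,(\phi_{\sigma}w\phi_{\sigma}^{-1})=v$. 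If $\tau=1$, then $\phi_{\sigma}$ is diagonal and $v$ is the diagonal matrix whose entries are the squared moduli of those of $\phi_{\sigma}$; a positive real diagonal matrix representing an element of $V$ in $\operatorname{PGL}_3(\mathbb{C})$ represents the identity, so $\phi_{\sigma}$ already meets Weil's condition. If $\tau$ transposes, say, the first two coordinates, then $v=\operatorname{diag}(w_0,\overline{w_0},r)$ with $w_0\in\mathbb{C}^{\ast}$ and $r>0$, and the constraint $v\in V$ forces $w_0\in\mathbb{R}$, whence $v\in\{\operatorname{id},\operatorname{diag}(1,1,-1)\}$. In the first subcase $\phi_{\sigma}$ works; in the second, take $w=\operatorname{diag}(-1,1,1)$: conjugation by $\phi_{\sigma}$ turns it into $\operatorname{diag}(1,-1,1)$, the product $\operatorname{diag}(-1,1,1)\operatorname{diag}(1,-1,1)$ equals $\operatorname{diag}(1,1,-1)=v$ in $\operatorname{PGL}_3(\mathbb{C})$, and $v^{2}=\operatorname{id}$, so $w\phi_{\sigma}$ meets Weil's condition. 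Hence $\mathbb{R}$ is a field of definition in all cases.

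The triangle alternative of Theorem \ref{Harui,Mitchell} never enters because $V$ is diagonal by hypothesis, and the $3$-cycle classes in $\operatorname{S}_3$ are ruled out by the $\tau^{2}=1$ observation, so only two cases survive. The one non-formal point is the transposition case: there the homomorphism $w\mapsto w\,(\phi_{\sigma}w\phi_{\sigma}^{-1})$ on $V$ has image of order just $2$, so surjectivity cannot be invoked and one must compute $v$ explicitly and exploit that its membership in $V$ already confines it to precisely that image; the trivial-permutation case rests on the same idea but is easier, while the normalizer computation and the cocycle bookkeeping are routine.
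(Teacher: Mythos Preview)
Your argument is correct and shares the paper's skeleton: compute the normalizer of $V$ as the monomial group, note that $\phi_{\sigma}\circ{}^{\sigma}\phi_{\sigma}\in V$ forces the permutation part $\tau$ of $\phi_{\sigma}$ to satisfy $\tau^{2}=1$, and then in the surviving cases adjust $\phi_{\sigma}$ by an element of $V$ to meet Weil's cocycle. The execution, however, is genuinely different and more economical. The paper leans on the explicit normal form (\ref{geomcompletefamilyremaining}): it uses the core $X^{d}+Y^{d}+Z^{d}$ to force the nonzero entries of $\phi_{\sigma}$ to be $d$-th roots of unity, and in the transposition case it rewrites the defining equation as (\ref{geomcompletefamilyremaining2}) in order to single out an explicit replacement $[\epsilon X:Z:Y]$. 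You bypass the equation entirely. In the diagonal case you observe that $\phi_{\sigma}\circ{}^{\sigma}\phi_{\sigma}$ is represented by a diagonal matrix with \emph{positive real} entries, and the only such element of $V$ in $\operatorname{PGL}_{3}(\mathbb{C})$ is the identity; no information about $|\lambda|,|\mu|$ is needed. In the transposition case you compute $v=\operatorname{diag}(w_{0},\overline{w_{0}},r)$ with $r>0$ and note that membership in $V$ already forces $w_{0}\in\mathbb{R}$, so $v\in\{\operatorname{id},\operatorname{diag}(1,1,-1)\}$, which is exactly the image of the homomorphism $w\mapsto w\,(\phi_{\sigma}w\phi_{\sigma}^{-1})$ on $V$; hence a suitable $w$ exists. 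What your route buys is independence from the model (\ref{geomcompletefamilyremaining}) and a uniform cohomological picture; what the paper's route buys is an explicit descent isomorphism read off from the equation.
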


\begin{proof}
One easily checks that the normalizer of $\langle\operatorname{diag}(1,-1,1),\,\operatorname{diag}(1,1,-1)\rangle$ in $\operatorname{PGL}_3(\mathbb{C})$ is $N=\langle[X:Z:Y],\,[Y:Z:X],\,\operatorname{D}(\mathbb{C})\rangle$. If $M(\mathbb{C}/\mathbb{R})=\mathbb{R}$, then we must have an isomorphism $\phi_{\sigma}:\,^{\sigma}\mathcal{S}\rightarrow\mathcal{S}$ in $N$. Consequently, $\phi_{\sigma}\circ\,^{\sigma}\phi_{\sigma}\in\operatorname{Aut}_+(\mathcal{S})$, and hence $\phi_{\sigma}$ reduces to one of the shapes $$\operatorname{diag}(1,\lambda,\mu),\,[X:\lambda Z:\mu Y],\,[\lambda Y:\mu X:Z],\,\, \text{or}\,\,[\lambda Z:Y:\mu X],$$
where $\lambda,\mu\in\mathbb{C}\setminus\{0\}$. Moreover, $F_{\mathcal{S}}(X,Y,Z)=0$ has core $X^d+Y^d+Z^d$ as in (\ref{geomcompletefamilyremaining}), thus $\lambda,\mu$ are $d$-th root of unity in $\mathbb{C}$. Now, we treat each of the above cases:

If $\phi_{\sigma}=\operatorname{diag}(1,\lambda,\mu)$ with $\lambda$ and $\mu$ are roots of unity in $\mathbb{C}$, then $\phi_{\sigma}\circ\,^{\sigma}\phi_{\sigma}=1$. That is, $\phi_{\sigma}$ satisfies Weil's condition of decent in Theorem \ref{weilcocylcle}, and $\mathbb{R}$ is a field of definition for $\mathcal{S}$.

If $\phi_{\sigma}=[X:\lambda Z:\mu Y]$, then $F_{\mathcal{S}}(X,Y,Z)=0$ has the form
\begin{equation}\label{geomcompletefamilyremaining2}
X^d+Y^d+Z^d+\sum_{\tiny{\begin{array}{c}
                           0\leq s,t,u\leq (d/2)-1\\
                             s+t+u=d/2
                          \end{array}}}\,X^{2s}(YZ)^{2t}(\alpha_{s,t,u}Y^{2u}+\beta_{s,t,u}Z^{2u})=0,
                          \end{equation}
Moreover, $\phi_{\sigma}\circ\,^{\sigma}\phi_{\sigma}=\operatorname{diag}(1,\lambda^{-1}\mu,\lambda\mu^{-1})\in\langle\operatorname{diag}(1,-1,1),\,\operatorname{diag}(1,1,-1)\rangle.$
Therefore, $\lambda=\pm\mu$ and $\phi_{\sigma}=[\epsilon X:Z:\pm Y]$ for some $d$-th root of unity $\epsilon$. In particular, $^{\sigma}\beta_{s,t,u}=\epsilon^{2s}\alpha_{s,t,u}$, and equation (\ref{geomcompletefamilyremaining2}) reduces to
\begin{equation*}
X^d+Y^d+Z^d+\sum_{\tiny{\begin{array}{c}
                           0\leq s,t,u\leq (d/2)-1\\
                             s+t+u=d/2
                          \end{array}}}\,X^{2s}(YZ)^{2t}(\alpha_{s,t,u}Y^{2u}+\epsilon^{-2s}\,^{\sigma}\alpha_{s,t,u}Z^{2u})=0.
                          \end{equation*}
In this case, we can take $\phi_{\sigma}=[\epsilon X:Z:Y]$, which satisfies $\phi_{\sigma}\circ\,^{\sigma}\phi_{\sigma}=1$. So, $\mathbb{R}$ is a field of definition for $\mathcal{S}$ by Theorem \ref{weilcocylcle}.

The remaining situations can be treated symmetrically.
\end{proof}


\end{document}